\theoremstyle{plain}
\newtheorem{thm}{Theorem}
\newtheorem{cor}[thm]{Corollary}
\newtheorem{lemma}[thm]{Lemma}
\theoremstyle{remark}
\newtheorem*{rem}{Remark}
\newtheorem*{rems}{Remarks}
\theoremstyle{definition}
\newtheorem{defn}{Definition}
\newcommand{\C}{{\mathbb C}}
\newcommand{\R}{{\mathbb R}}
\newcommand{\calf}{\mathcal{F}}
\newcommand{\iC}{{\int_{\C}}}
\newcommand{\Fpa}{{\mathcal F_{\phi}^p}}
\newcommand{\Fdosa}{{\mathcal F_{\phi}^2}}
\newcommand{\Finfa}{{\mathcal F_{\phi}^\infty}}
\newenvironment{rlist}
{

\begin{enumerate}}
{\end{enumerate}}
\title{Traces of functions in Fock spaces on lattices of critical density.}
\thanks{The authors are supported by the project MTM2008-05561-C02-01 and the grant
2009 SGR 1303}
\author{Jeremiah Buckley}
\address{Dept.\ Matem\`atica Aplicada i An\`alisi,
Universitat  de Barcelona, Gran Via 585, 08007 Bar\-ce\-lo\-na, Spain} \email{jerry.buckley07@gmail.com}
\author{Xavier Massaneda}
\address{Dept.\ Matem\`atica Aplicada i An\`alisi,
Universitat  de Barcelona, Gran Via 585, 08007 Bar\-ce\-lo\-na, Spain} \email{xavier.massaneda@ub.edu}
\author{Joaquim Ortega-Cerd\`a}
\address{Dept.\ Matem\`atica Aplicada i An\`alisi,
Universitat  de Barcelona, Gran Via 585, 08007 Bar\-ce\-lo\-na, Spain} \email{jortega@ub.edu}
\begin{document}
\begin{abstract}
Following a scheme of Levin we describe the values that functions in Fock spaces take on lattices of critical density in terms of both the size of the
values and a cancelation condition that involves discrete versions of the Cauchy and Beurling-Ahlfors transforms.
\end{abstract}
\maketitle

\section{Introduction}
We are interested in describing the set of values $c=(c_\lambda)_{\lambda\in\Lambda}$ such that there exists some function $f$ in a Fock space satisfying
the condition $f|\Lambda=c$ where $\Lambda$ is a sequence that `just fails' to be interpolating. While we shall prove our results in a more general
context, we begin by introducing the problem in the classical Bargmann-Fock space, where the results are more easily digestible.

We define the classical Bargmann-Fock space, as studied in \cite{Se92} and \cite{SeWa}, by
$$\calf^p =\{f\in H(\mathbb{C}):\|f\|_{\calf^p}^p=\iC|f(z)|^p e^{-p|z|^2}dm(z)<+\infty\}\text{, for }1\leq p<+\infty$$
and
$$\calf^\infty =\{f\in H(\mathbb{C}):\|f\|_{\calf^\infty}=\sup_{z\in\mathbb{C}}|f(z)|e^{-|z|^2}<+\infty\}$$
where $m$ denotes the Lebesgue measure on the plane. Seip and Wallst{\'e}n completely characterised sets of sampling and sets of interpolation in these
spaces. We begin with a definition.
\begin{defn}
A sequence $\Lambda\subseteq\mathbb{C}$ is an \textit{interpolating sequence for $\calf^p$, where $1\leq p<+\infty$} if for every sequence of values
$c=\left(c_\lambda\right)_{\lambda\in\Lambda}$ such that
$$\sum_{\lambda\in\Lambda}|c_\lambda|^p e^{-p|\lambda|^2}<+\infty$$
there exists $f\in\calf^p$ such that $f|\Lambda=c$.

Also $\Lambda$ is an \textit{interpolating sequence for $\calf^\infty$} if for every sequence of values $c$ such that
$$\sup_{\lambda\in\Lambda}|c_\lambda| e^{-|\lambda|^2}<+\infty$$
there exists $f\in\calf^\infty$ such that $f|\Lambda=c$.
\end{defn}

Thus the interpolating sequences are the sequences such that the values functions from the space take on the sequence can be described purely in terms of
a natural growth condition. Seip and Wallst{\'e}n then proved the following result:
\begin{thm}[{\cite[Theorem 2.2, Theorem 2.4]{Se92},\cite[Theorem 1.2]{SeWa}}]
A sequence $\Lambda$ is an interpolating sequence for $\calf^p$, where $p\in[1,\infty]$, if and only if
\begin{itemize}
\item $\Lambda$ is a uniformly separated sequence, that is $\inf_{\lambda\neq\lambda'}|\lambda-\lambda'|>0$ and
\item The upper uniform density of $\Lambda$, $\mathcal{D}^+(\Lambda)=\displaystyle{\limsup_{r\to \infty}\sup_{z\in\C} \frac{\# \left(\Lambda\cap \overline{D(z,r)}\right)}
{\pi r^2}}<\frac{2}{\pi}$.
\end{itemize}
\end{thm}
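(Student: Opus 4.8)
The plan is to split the equivalence into the two directions, treating uniform separation as the routine part and the density bound $\mathcal D^+(\Lambda)<2/\pi$ as the substantial one, and to do the case $p=2$ first (where a Hilbert space argument is available) and then pass to general $p$, including $p=\infty$, by an atomic decomposition or a $\bar\partial$ argument. For \emph{sufficiency}, assume $\Lambda$ is uniformly separated with $\mathcal D^+(\Lambda)<2/\pi$ and try to show the restriction map $f\mapsto(f(\lambda))_\lambda$ is onto the weighted sequence space. Since $\mathcal D^+(\Lambda)<2/\pi$, there is $\epsilon>0$ so that $\Lambda$ still has density below the critical density for the dilated Fock weight $e^{-p(1-\epsilon)|z|^2}$; this slack lets me build a Weierstrass-type product $F$ with $Z_F=\Lambda$ (simple zeros) and a two-sided bound $|F(z)|\,e^{-(1-\epsilon)|z|^2}\asymp\mathrm{dist}(z,\Lambda)\cdot(\text{bounded oscillating factor})$, so that the candidate interpolants $g_\lambda(z)=\frac{F(z)}{F'(\lambda)(z-\lambda)}$ are not merely bounded in $\calf^\infty$ but decay like $e^{-\epsilon|z-\lambda|^2}$ after normalisation — exactly what the critical lattice lacks and what forces $g_\lambda\in\calf^p$ with summable norms. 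I would then show the synthesis operator $c\mapsto\sum_\lambda c_\lambda g_\lambda$ maps the weighted $\ell^p$ space boundedly into $\calf^p$ (a Schur test for $p=2$, Young's inequality or an almost-orthogonality estimate from the Gaussian off-diagonal decay for general $p$), and that it inverts the restriction map by construction. Equivalently, one can skip the product and correct a smooth quasi-interpolant $h(z)=\sum_\lambda c_\lambda e^{z\bar\lambda-|\lambda|^2}\chi(z-\lambda)$ by solving $\bar\partial u=\bar\partial h$ in an $L^p$ space against a weight carrying logarithmic poles at the points of $\Lambda$; the density bound is what keeps that weight admissible — subharmonic with Laplacian bounded below while staying $\le p|z|^2+O(1)$ — so that $f=h-u$ is the desired element of $\calf^p$.

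For \emph{necessity}, suppose $\Lambda$ is interpolating and fix a bounded right inverse $E$ of the restriction map. Separation: interpolating $c_\mu=e^{|\lambda|^2}\delta_{\mu\lambda}$ produces $g=E(c)$ with $|g(\lambda)|e^{-|\lambda|^2}=1$, $g(\lambda')=0$ for $\lambda'\ne\lambda$, and $\|g\|_{\calf^p}$ uniformly bounded; the pointwise estimate $|g(z)|e^{-|z|^2}\lesssim\|g\|_{\calf^p}$ together with the Lipschitz bound $\big|\,|g(z)|e^{-|z|^2}-|g(w)|e^{-|w|^2}\,\big|\lesssim\|g\|\,|z-w|$ for small $|z-w|$ forces $\inf_{\lambda\ne\lambda'}|\lambda-\lambda'|>0$. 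Density: I would use that the class of interpolating sequences with a uniform interpolation constant is invariant under the Bargmann translations $z\mapsto z-a$ twisted by $e^{\bar a z-|a|^2/2}$ and closed under weak limits; choosing translates of $\Lambda$ that realise the $\limsup$ defining $\mathcal D^+(\Lambda)$, the limiting sequence is again interpolating, and a Jensen/Carleman counting inequality applied to an interpolating function that is $1$ at one point and $0$ at the rest — or a direct comparison with the zero set of the Weierstrass $\sigma$-function of the square lattice, which has density exactly $2/\pi$ and whose associated interpolants just fail to be square-integrable — yields $\mathcal D^+(\Lambda)\le 2/\pi$ and excludes equality.

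The hard step in both directions will be the critical value $2/\pi$: on the sufficiency side, producing the adapted product (or subharmonic weight) that consumes precisely the Riesz mass $\tfrac1{2\pi}\Delta(|z|^2)\,dm=\tfrac2\pi\,dm$ with a genuine deficit, so that the interpolants land in $\calf^p$ rather than only in $\calf^\infty$; on the necessity side, ruling out $\mathcal D^+(\Lambda)=2/\pi$, which genuinely fails without the homogenisation-by-weak-limits reduction and the explicit analysis of the extremal lattice. By comparison the separation statements, and the transfer from $p=2$ to general $p$ and to $p=\infty$, are soft.
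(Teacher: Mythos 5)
The paper does not prove this theorem; it is stated as a cited background result of Seip and Seip--Wallst\'en \cite{Se92,SeWa} and is used only to set up the problem at critical density. So there is no ``paper's own proof'' to compare against, and I can only measure your sketch against the published arguments. Your overall framework (density surplus gives room on the sufficiency side; Beurling-type compactness and counting on the necessity side) is the right one, but two of the central claims do not hold as stated.

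On sufficiency, the two-sided bound $|F(z)|\,e^{-(1-\epsilon)|z|^2}\asymp \mathrm{dist}(z,\Lambda)$ is false for a Weierstrass product with zero set exactly $\Lambda$ when $\mathcal D^+(\Lambda)<2/\pi$: the lower bound would force the zeros to have uniform density $(1-\epsilon)\cdot 2/\pi$ in every direction, whereas the hypothesis is only an \emph{upper} bound on the density (which may vanish over large regions). And even after homogenising to a superset $\Lambda'\supset\Lambda$ of uniform density $(1-\epsilon)\cdot 2/\pi$, the normalised Lagrange kernels satisfy
$e^{|\lambda|^2}\,|g_\lambda(z)|\,e^{-|z|^2}\asymp e^{\epsilon(|\lambda|^2-|z|^2)}\,\mathrm{dist}(z,\Lambda')/|z-\lambda|$,
and $e^{\epsilon(|\lambda|^2-|z|^2)}=e^{-\epsilon|z-\lambda|^2-2\epsilon\,\mathrm{Re}\bigl(\bar\lambda(z-\lambda)\bigr)}$ is \emph{not} Gaussian decay in $|z-\lambda|$ --- it is exponentially large whenever $z-\lambda$ points away from $\lambda$ --- so no Schur or Young estimate of the kind you propose closes the argument. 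Your alternative $\bar\partial$ route is the one that actually works (the density deficit is what permits a subharmonic weight $\psi\le |z|^2+O(1)$ with $\Delta\psi\ge 0$ and logarithmic singularities on $\Lambda$), and the first route would need to be abandoned or replaced by the much more intricate Seip--Wallst\'en product construction. On necessity, weak-limit invariance and Jensen counting give $\mathcal D^+(\Lambda)\le 2/\pi$ cheaply, but excluding equality is the genuinely hard step and your sketch does not supply it: comparing with the Weierstrass $\sigma$-function only disposes of the square lattice itself, not an arbitrary $\Lambda$ with $\mathcal D^+(\Lambda)=2/\pi$. Finally, the passage from $p=2$ to general $p$ and to $p=\infty$ is not ``soft''; the extreme exponents are exactly where interpolation is most delicate, which is the entire point of the present paper.
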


There are many generalisations of this result, see \cite{MMO} and the references therein. We shall consider instead sequences $\Lambda\subset\C$ that are
uniformly separated but whose density is exactly the critical value, that is $\mathcal{D}^+(\Lambda)=2/\pi$. We shall not consider all such sequences but
instead restrict ourselves to those sequences that we have extra information about.

We consider first, as an instructive example, the integer lattice (suitably scaled)
$$\Lambda=\sqrt{\frac{\pi}{2}}(\mathbb{Z}+i\mathbb{Z}),$$
which is a sequence of critical density $2/\pi$. The Weierstrass $\sigma$-function associated to $\Lambda$ is defined by
$$\sigma(z)=z\prod_{\lambda\in\Lambda_0}\left(1-\frac{z}{\lambda}\right)e^{\frac{z}{\lambda}+\frac{1}{2}\frac{z^2}{\lambda^2}},$$
where we use the notation $\Lambda_{\lambda}=\Lambda\backslash\{\lambda\}$. Note that $\Lambda$ is the zero-set of $\sigma$, and
$$|\sigma(z)|\simeq e^{|z|^2}d(z,\Lambda)$$
for all $z\in\mathbb{C}$ \cite[p. 108]{SeWa}. Here $d$ refers to the usual distance between a point and a set.

Given a sequence $(a_{\lambda})_{\lambda\in\Lambda}$, we define the principal value of its sum to be
$$\text{p.v.}\sum_{\lambda\in\Lambda}a_{\lambda}=\lim_{R\to\infty} \sum_{|\lambda|<R}a_{\lambda}.$$
We are ready to state our main result, in this special case:
\begin{thm}\label{ClassF}
Let $\Lambda=\sqrt{\frac{\pi}{2}}(\mathbb{Z}+i\mathbb{Z})$. There exists $f\in\calf^1$  satisfying $f|\Lambda=c$ if and only if
\begin{itemize}
\item $\displaystyle{\sum_{\lambda\in\Lambda}|c_\lambda| e^{-|\lambda|^2}<+\infty}$,
\item $\displaystyle{\sum_{\lambda'\in\Lambda}\Bigl|\sum_{\lambda\in\Lambda_{\lambda'}}\frac{c_\lambda}{\sigma'(\lambda)(\lambda-\lambda')}\Bigr|<+\infty}$ and
\item $\displaystyle{\sum_{\lambda'\in\Lambda}\Bigl|\sum_{\lambda\in\Lambda_{\lambda'}}\frac{c_\lambda}{\sigma'(\lambda)(\lambda-\lambda')^2}\Bigr|<+\infty}$.
\end{itemize}
There exists $f\in\calf^p$ for $1<p<\infty$ satisfying $f|\Lambda=c$ if and only if
\begin{itemize}
\item $\displaystyle{\sum_{\lambda\in\Lambda}|c_\lambda|^p e^{-p|\lambda|^2}<+\infty}$ and
\item $\displaystyle{\sum_{\lambda'\in\Lambda}\Bigl|\textup{p.v.}\sum_{\lambda\in\Lambda_{\lambda'}}\frac{c_\lambda}{\sigma'(\lambda)(\lambda-\lambda')}\Bigr|^p<+\infty}$.
\end{itemize}
There exists $f\in\calf^\infty$ satisfying $f|\Lambda=c$ if and only if
\begin{itemize}
\item $\displaystyle{\sup_{\lambda\in\Lambda}|c_\lambda| e^{-|\lambda|^2}<+\infty}$,
\item
$\displaystyle{\sup_{\lambda'\in\Lambda_0}\Bigl|-\frac{c_0}{\sigma'(0)\lambda'}+\textup{p.v.}\sum_{\lambda\in\Lambda_{\lambda'}\backslash\{0\}}\frac{c_\lambda}{\sigma'(\lambda)}\bigl(\frac{1}{\lambda-\lambda'}-\frac{1}{\lambda}\bigr)\Bigr|<+\infty}$
and
\item $\displaystyle{\sup_{\lambda'\in\Lambda}\Bigl|\textup{p.v.}\sum_{\lambda\in\Lambda_{\lambda'}}\frac{c_\lambda}{\sigma'(\lambda)(\lambda-\lambda')^2}\Bigr|<+\infty}$.
\end{itemize}
\end{thm}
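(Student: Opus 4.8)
The plan is to follow Levin's scheme: reconstruct $f$ from $c$ by a regularised Lagrange series built on the Weierstrass $\sigma$-function, and match the convergence and size of that series against the three stated conditions. Throughout I set $a_\lambda=c_\lambda/\sigma'(\lambda)$; since $|\sigma(z)|\simeq e^{|z|^2}d(z,\Lambda)$ forces $|\sigma'(\lambda)|\simeq e^{|\lambda|^2}$, the size condition is equivalent to $(a_\lambda)\in\ell^p$ (resp.\ $\ell^\infty$), and the remaining two conditions say precisely that the discrete Cauchy transform $\mathcal C[a](\lambda')=\textrm{p.v.}\sum_{\lambda\neq\lambda'}a_\lambda/(\lambda-\lambda')$ and the discrete Beurling--Ahlfors transform $\mathcal B[a](\lambda')=\textrm{p.v.}\sum_{\lambda\neq\lambda'}a_\lambda/(\lambda-\lambda')^2$ lie in $\ell^p$ (resp.\ in $\ell^\infty$, with a renormalisation at the origin when $p=\infty$). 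The structural fact I would exploit is that $\mathcal B$ is a discrete Calder\'on--Zygmund operator, hence bounded on $\ell^p$ for $1<p<\infty$ but not on $\ell^1$ or $\ell^\infty$, while $\mathcal C$ has a degree $-1$ kernel and is bounded on no $\ell^p$; this is exactly what makes the third condition superfluous in the reflexive range and indispensable at the endpoints, and the second condition a genuine restriction for every $p$.

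For sufficiency I would write $f=\sigma G$ where $G$ is meromorphic with simple poles exactly on $\Lambda$ and principal part $a_\lambda/(z-\lambda)$ at $\lambda$; any such $G$ yields $f|\Lambda=c$, because at $\lambda'$ the simple zero of $\sigma$ cancels the simple pole of $G$ and leaves $\sigma'(\lambda')a_{\lambda'}=c_{\lambda'}$, all other terms of $\sigma G$ vanishing there. Since $\sum_\lambda1/|\lambda|^2=\infty$ but $\sum_\lambda1/|\lambda|^3<\infty$ for this lattice, the series must carry two subtracted terms, $G(z)=\sum_\lambda a_\lambda\bigl(\frac1{z-\lambda}+\frac1\lambda+\frac{z}{\lambda^2}\bigr)$ (with the obvious modification at $\lambda=0$), which converges locally uniformly because $(a_\lambda)$ is bounded. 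The norm estimate then proceeds by tiling $\C$ with the fundamental squares $Q_{\lambda'}$ centred at the lattice points and using $|f(z)|e^{-|z|^2}\simeq d(z,\Lambda)\,|G(z)|$: on $Q_{\lambda'}$ one splits $G$ into its pole $a_{\lambda'}/(z-\lambda')$, whose contribution is $\simeq|a_{\lambda'}|^p$ and sums to the size term, and its holomorphic remainder $H_{\lambda'}$, whose values on $Q_{\lambda'}$ are controlled by $H_{\lambda'}(\lambda')$ and $H_{\lambda'}'(\lambda')$ --- and these reassemble, modulo absolutely convergent pieces and explicit functions of $c_{\lambda'}$, into $\mathcal C[a](\lambda')$ and $\mathcal B[a](\lambda')$. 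For $1<p<\infty$ the Beurling piece is absorbed by the size term via boundedness of $\mathcal B$ on $\ell^p$; for $p\in\{1,\infty\}$ it is carried by hypothesis. The $p=\infty$ case additionally forces the pole part to be measured in $\sup$ and the Cauchy sum to be renormalised by subtracting its contribution from $\lambda=0$, which is the origin of the $-c_0/(\sigma'(0)\lambda')$ term and of the replacement of $1/(\lambda-\lambda')$ by $1/(\lambda-\lambda')-1/\lambda$.

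For necessity, take $f\in\mathcal F^p$ with $f|\Lambda=c$. The size condition is routine: $|f(\lambda)|^pe^{-p|\lambda|^2}\lesssim\int_{D(\lambda,1)}|f(w)|^pe^{-p|w|^2}\,dm(w)$ by the sub-mean-value inequality for Fock spaces, and summing over the lattice with bounded overlap gives $\sum_\lambda|c_\lambda|^pe^{-p|\lambda|^2}\lesssim\|f\|_{\mathcal F^p}^p$ (and the $\sup$ version for $p=\infty$). For the cancellation conditions, consider $g=f/\sigma$, meromorphic with the same principal parts as the regularised $G$ above, so that $g$ and $G$ differ by an entire function $E$; I would identify $\mathcal C[a](\lambda')$ and $\mathcal B[a](\lambda')$ with the first two Taylor coefficients at $\lambda'$ of the holomorphic part of $g$ (corrected by $E$ and by explicit functions of $c_{\lambda'}$), then estimate those coefficients by Cauchy's formula on the circle $|z-\lambda'|=1/4$, where $d(z,\Lambda)\gtrsim1$ and hence $|g(z)|\lesssim|f(z)|e^{-|z|^2}$. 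Summing the resulting pointwise bounds over $\lambda'$ and invoking the size estimate again closes the loop, once the entire correction $E$ is pinned down by the growth of $f$ and its values on $\Lambda$; equivalently, one may realise the partial sums $\sum_{|\lambda|<R}$ of the transforms as contour integrals of $g$ over large squares at distance $\gtrsim1$ from $\Lambda$ and bound them directly by $\|f\|_{\mathcal F^p}$, which is also how one proves the principal values exist.

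The main obstacle, in both directions, is the conditional convergence forced by the critical density: the bare interpolation kernel is not summable, every sum in sight is only a principal value, and one is working exactly on the threshold where H\"ormander's $\bar\partial$-estimate degenerates, so there is no room to perturb. Concretely, the delicate points I expect to absorb most of the work are (i) choosing the regularisation of $G$ so that the per-square holomorphic remainders reassemble \emph{cleanly} into $\mathcal C[a]$ and $\mathcal B[a]$ without leaving a divergent tail, (ii) establishing boundedness of the discrete Beurling--Ahlfors transform on $\ell^p$, $1<p<\infty$, with the uniform control on principal values this requires, and (iii) handling the $p=\infty$ endpoint, where the Cauchy transform is defined only modulo constants and the renormalisation at the origin must be tracked by hand.
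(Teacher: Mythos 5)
Your plan is structurally the same as the paper's: regularised Lagrange reconstruction via $\sigma$, a Plancherel--Polya bound for the size condition, tiling by fundamental cells for the norm estimate, and boundedness of the discrete Beurling--Ahlfors transform on $\ell^p$, $1<p<\infty$, to dispense with the third condition in the reflexive range. Your necessity step (Cauchy estimates of the Taylor coefficients of $f/\sigma$ on circles of radius $\tfrac14$, or contour integrals over large squares away from $\Lambda$) differs only cosmetically from the paper's device of sampling $f/g$ at the shifted points $\lambda'+\delta\omega_k$ and inverting over the $N$-th roots of unity $\omega_k$; both are ways of extracting the first few Taylor coefficients using that $|f/\sigma|$ is controlled by $\|f\|_{\calf^p}$ away from the lattice.

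There is, however, a concrete gap in your sufficiency argument. You propose $f=\sigma G$ with the absolutely convergent regularisation
$$G(z)=\frac{a_0}{z}+\sum_{\lambda\in\Lambda_0}a_\lambda\Bigl(\frac1{z-\lambda}+\frac1\lambda+\frac z{\lambda^2}\Bigr).$$
This $G$ differs from $\textup{p.v.}\sum_{\lambda}\dfrac{a_\lambda}{z-\lambda}$ by the linear polynomial
$C_1+C_2z$ with $C_1=\textup{p.v.}\sum_{\lambda\in\Lambda_0}a_\lambda/\lambda$ and
$C_2=\textup{p.v.}\sum_{\lambda\in\Lambda_0}a_\lambda/\lambda^2$. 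Since $\sigma\notin\calf^p$ for $p<\infty$ (indeed $|\sigma(z)|e^{-|z|^2}\simeq d(z,\Lambda)$ is bounded below off $\Lambda$), the term $\sigma\cdot(C_1+C_2z)$ is not in $\calf^p$ whenever $C_1$ or $C_2$ is nonzero, and neither vanishes for generic data. Equivalently, in your per-cell decomposition the "holomorphic remainder'' $H_{\lambda'}(\lambda')$ carries a summand $C_1+C_2\lambda'$ that grows linearly in $\lambda'$; this is not one of the "absolutely convergent pieces'' you propose to discard. The fix is precisely what the paper does: take
$$f(z)=\sigma(z)\,\textup{p.v.}\sum_{\lambda\in\Lambda}\frac{a_\lambda}{z-\lambda}
 =\sigma(z)\bigl(G(z)-C_1-C_2z\bigr),$$
whose existence under the hypotheses follows from the cancellation conditions (which guarantee $C_1,C_2$ exist), and then estimate the norm using the Taylor-remainder identity
$\frac{1}{z-\lambda}=\frac{(z-\lambda')^N}{(z-\lambda)(\lambda-\lambda')^N}-\sum_{n=1}^{N}\frac{(z-\lambda')^{n-1}}{(\lambda-\lambda')^{n}}$
on each $Q_{\lambda'}$. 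One further point you should not gloss over: on $Q_{\lambda'}$ the remainder is not controlled merely by $H_{\lambda'}(\lambda')$ and $H_{\lambda'}'(\lambda')$; you also need the higher tail $\sum_{\lambda\ne\lambda'}|a_\lambda|/|\lambda-\lambda'|^{N+1}$ to lie in $\ell^p$ after multiplication by $\rho(\lambda')^N$, which is the content of the paper's Lemma~\ref{usef} and is the reason a fixed finite $N$ (here $N=2$) suffices.
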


In fact these results hold for any sequence $\Lambda$ that is the zero set of a function $\tau$ with growth similar to the Weierstrass $\sigma$-function.
Specifically, suppose $\tau$ is an entire function such that
\begin{itemize}
\item The zero-sequence $\mathcal{Z}(\tau)$ of $\tau$  is uniformly separated,
\item $\sup_{z\in\mathbb{C}}d(z,\mathcal{Z}(\tau))<+\infty$ and
\item $|\tau(z)|\simeq e^{|z|^2}d(z,\mathcal{Z}(\tau))$ for all $z\in\mathbb{C}$.
\end{itemize}
Then Theorem~\ref{ClassF} holds if we replace $\sigma$ by $\tau$ and take $\Lambda=\mathcal{Z}(\tau)$. Such a set is always a set of critical density. The
existence of many such functions $\tau$ is guaranteed by Theorem~\ref{multiplier}.

Our work, both the results and the proofs, is inspired by a similar result due to Levin in the classical Paley-Wiener spaces \cite[Lecture 21]{Le}. In
these spaces, the integers are an interpolating sequence in almost every situation, however this fails in the two extremes, namely the $L^1$ and
$L^\infty$ cases. Levin completely described the traces of functions in the $L^\infty$ spaces on the integers, and Ber (see \cite[Lecture 21]{Le} and also
\cite{Ber}) solved the same problem in the $L^1$ case. While a discrete version of the Hilbert transform is the key ingredient in these results, we shall
see that it is discrete versions of the Cauchy and Beurling-Ahlfors transforms that shall play a similar role in the Fock context.

We shall in fact consider more general spaces, in which the function $|z|^2$ is replaced by a subharmonic function $\phi$ whose Laplacian $\Delta\phi$ is
a doubling measure.

The paper is structured as follows: In Section~2 we state some definitions and basic properties to be used later (namely of doubling measures, generalised
Fock spaces and generalised lattices). Section~3 contains the statements of our results. In Section~4 we prove two representation formulas for functions
in our generalised Fock spaces in terms of the values of the function on a critical lattice. In Section~5 we study a discrete version of the
Beurling-Ahlfors transform. Finally in Section~6 we prove the statements in Section~3.

We shall use the following standard notation: The expression  $f \lesssim g$ means that there is a constant $C$ independent of the relevant variables such
that $f\le C g$, and $f\simeq g$ means that $f\lesssim g$ and $g\lesssim f$.

\section{Technical Preliminaries}
This section contains technical results that we shall repeatedly use in our proofs, as well as a precise definition of the spaces we are studying. Much of
the development follows \cite{MMO}.
\subsection{Doubling Measures}
\begin{defn}
A nonnegative Borel measure $\mu$ in $\C$ is called \textit{doubling} if  there exists $C>0$ such that
$$\mu(D(z,2r))\leq C \mu(D(z,r))$$
for all $z\in\C$ and $r>0$. We denote by $C_\mu$ the infimum of the constants $C$ for which the inequality holds.
\end{defn}
Let $\phi$ be a (non-harmonic) subharmonic function whose Laplacian $\Delta\phi$ is a doubling measure. Canonical examples of such functions are given by
$\phi(z)=|z|^\gamma$ where $\gamma>0$. Writing $\mu=\Delta\phi$ we define, for $z\in\mathbb{C}$, $\rho_\phi(z)$ to be the radius such that
$\mu(D(z,\rho_\phi(z)))=1$. We shall normally ignore the dependence on $\phi$ and simply write $\rho(z)$.

We have the following estimates from \cite[p. 869]{MMO}: There exist $\eta>0$, $C_0>0$ and $\beta\in(0,1)$ such that
\begin{equation}\label{eqn5}
C_0^{-1}|z|^{-\eta}\leq\rho(z)\leq C_0|z|^\beta\text{ for }|z|>1.
\end{equation}
and
\begin{equation}\label{Lip}
|\rho(z)-\rho(\zeta)|\leq|z-\zeta|\text{ for }z,\zeta\in\mathbb{C}.
\end{equation}
So $\rho$ is a Lipschitz function, and so in particular is continuous. We will write
$$D^r(z)=D(z,r\rho(z))$$
and
$$D(z)=D^1(z).$$
We then have the following estimate, which we shall repeatedly make use of:
\begin{lemma}[{\cite[p. 205]{Ch}}]\label{Christ}
If $\zeta\not\in D(z)$ then
$$\frac{\rho(z)}{\rho(\zeta)}\lesssim\left(\frac{|z-\zeta|}{\rho(\zeta)}\right)^{1-t}$$
for some $t\in(0,1)$ depending only on the doubling constant, $C_\mu$.
\end{lemma}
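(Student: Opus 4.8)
The plan is to deduce the estimate from two properties of the doubling measure $\mu=\Delta\phi$: the standard (forward) doubling chain and its companion \emph{reverse doubling} inequality. Write $R=|z-\zeta|$, and note that $\zeta\notin D(z)$ means exactly $R\geq\rho(z)$. First I would dispose of the case $\rho(z)\leq\rho(\zeta)$, which needs nothing but monotonicity of $s\mapsto\mu(D(w,s))$: if $R\geq\rho(\zeta)$ then $\bigl(R/\rho(\zeta)\bigr)^{1-t}\geq1\geq\rho(z)/\rho(\zeta)$ for every $t\in(0,1)$, while if $R<\rho(\zeta)$ then $\rho(z)=\rho(z)^{t}\rho(z)^{1-t}\leq\rho(\zeta)^{t}R^{1-t}$ since $\rho(z)\leq\min\{\rho(\zeta),R\}$. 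So from now on we may assume $\rho(z)>\rho(\zeta)$, hence $R\geq\rho(z)>\rho(\zeta)$.

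Next I would record the reverse doubling estimate: there is $\varepsilon_{0}=\varepsilon_{0}(C_\mu)>0$ with $\mu(D(w,2s))\geq(1+\varepsilon_{0})\mu(D(w,s))$ for all $w\in\C$, $s>0$. Indeed, if the annulus $D(w,2s)\setminus D(w,s)$ carried only an $\varepsilon$-fraction of $\mu(D(w,s))$, choose $w'$ with $|w'-w|=\tfrac{3}{2}s$, so that $D(w',s/4)$ lies inside that annulus while $D(w,s)\subset D(w',4s)$; then forward doubling forces $\mu(D(w,s))\leq C_\mu^{4}\mu(D(w',s/4))\leq C_\mu^{4}\varepsilon\,\mu(D(w,s))$, impossible once $\varepsilon<C_\mu^{-4}$ (recall a doubling measure with a ball of zero mass vanishes identically, and $\mu\neq0$ since $\phi$ is non-harmonic). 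This pins $\varepsilon_{0}$ to $C_\mu$ alone. Iterating over dyadic scales and interpolating yields exponents $\delta=\delta(C_\mu)>0$ and $\gamma:=\log_{2}C_\mu$, with $\delta<\gamma$ after shrinking $\varepsilon_{0}$ if needed, such that
\[
\mu(D(w,r'))\gtrsim\Bigl(\tfrac{r'}{s}\Bigr)^{\delta}\mu(D(w,s))
\quad\text{and}\quad
\mu(D(w,s))\gtrsim\Bigl(\tfrac{s}{r'}\Bigr)^{\gamma}\mu(D(w,r'))
\qquad(s\leq r').
\]

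Finally I would combine the two. Apply the reverse bound at $w=\zeta$ between scales $\rho(\zeta)$ and $R$ (legitimate since $R>\rho(\zeta)$), using $\mu(D(\zeta,\rho(\zeta)))=1$ and the inclusion $D(\zeta,R)\subset D(z,2R)$, to get $\mu(D(z,2R))\gtrsim(R/\rho(\zeta))^{\delta}$; then apply the forward bound at $w=z$ between scales $2R$ and $r$ to get $\mu(D(z,r))\gtrsim(r/R)^{\gamma}(R/\rho(\zeta))^{\delta}$. Choosing $r=\rho(\zeta)^{t}R^{1-t}$ with $t=\delta/\gamma\in(0,1)$ makes the right-hand side $\gtrsim1$, hence $\mu(D(z,Cr))\geq1$ for a suitable $C=C(C_\mu)$, and therefore $\rho(z)\leq Cr=C\rho(\zeta)^{t}R^{1-t}$, which is the assertion; note $r\leq R<2R$ here because $\rho(\zeta)<R$, so the downward chaining step is always valid. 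The one genuinely delicate point is the uniformity of $\varepsilon_{0}$, equivalently of $\delta$, in terms of $C_\mu$ only — this is where the connectedness of the plane (so that annuli contain balls of comparable size) and the nondegeneracy of $\mu$ are used; everything else is bookkeeping with doubling chains.
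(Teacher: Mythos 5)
Your argument is correct, and since the paper simply cites this lemma to Christ \cite[p.\ 205]{Ch} without reproducing a proof, there is no in-paper argument for a direct comparison. Your route --- forward doubling giving an upper mass exponent $\gamma=\log_2 C_\mu$, reverse doubling (via the annulus/connectedness argument) giving a lower exponent $\delta>0$, then transferring from the ball around $\zeta$ to a ball around $z$ by the inclusion $D(\zeta,R)\subset D(z,2R)$ and choosing $r$ to balance the two power laws --- is the standard way this estimate is derived, and is essentially the mechanism in Christ's original lemma. The small points you flag as delicate are exactly right: (i) the uniformity of $\varepsilon_0$ in $C_\mu$ rests on the geometric fact that an annulus $D(w,2s)\setminus D(w,s)$ in $\C$ contains a ball of radius comparable to $s$, and on the observation that a nonzero doubling measure has full support (here used via non-harmonicity of $\phi$, so $\mu\neq 0$); (ii) to pass from $\mu(D(z,r))\gtrsim 1$ to $\rho(z)\lesssim r$ one needs $s\mapsto\mu(D(z,s))$ continuous and strictly increasing so that $\rho(z)$ is well-defined by $\mu(D(z,\rho(z)))=1$, which again follows from full support and nonatomicity of a doubling measure; (iii) $\delta<\gamma$ (so $t=\delta/\gamma\in(0,1)$) holds automatically from $1+\varepsilon_0\le C_\mu$, with shrinking of $\varepsilon_0$ only needed to rule out the degenerate equality. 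Your preliminary reduction to the case $\rho(z)>\rho(\zeta)$ via the elementary inequality $\rho(z)\le\rho(\zeta)^t R^{1-t}$ when $\rho(z)\le\min\{\rho(\zeta),R\}$ is clean and correct. In short: a correct proof, following the same doubling/reverse-doubling scheme that underlies the cited source.
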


We note, as in \cite{Ch}, that $\rho^{-2}$ can be seen as a regularisation of $\Delta\phi$. Then if we define $d_\phi$ to be the distance induced by the
metric $\rho(z)^{-2}dz\otimes d\overline{z}$ we have:
\begin{lemma}[{\cite[Lemma 4]{MMO}}]\label{lem4}
There exists $\delta>0$ such that for every $r>0$ there exists $C_r>0$ such that
\begin{itemize}
\item $\displaystyle{C_r^{-1}\frac{|z-\zeta|}{\rho(z)}\leq d_\phi(z,\zeta)\leq C_r\frac{|z-\zeta|}{\rho(z)}}\text{ if }|z-\zeta|\leq r\rho(z)$ and
\item $\displaystyle{C_r^{-1}\left(\frac{|z-\zeta|}{\rho(z)}\right)^\delta\leq d_\phi(z,\zeta)\leq C_r\left(\frac{|z-\zeta|}{\rho(z)}\right)^{2-\delta}}\text{ if }|z-\zeta|> r\rho(z)$.
\end{itemize}
\end{lemma}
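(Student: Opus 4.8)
The plan is to argue directly from the definition: $d_\phi(z,\zeta)$ is the infimum of $\int_\gamma\rho(w)^{-1}\,|dw|$ over rectifiable curves $\gamma$ joining $z$ and $\zeta$, so everything reduces to controlling how fast $\rho$ can vary along such a curve. As a preliminary I would record the ``slowly varying'' estimate: $\rho(w)\simeq\rho(z)$ whenever $|z-w|\le K\rho(z)$, with constants depending only on $K$ and $C_\mu$. The upper comparison is immediate from~\eqref{Lip}; for the lower comparison, \eqref{Lip} handles $|z-w|\le\tfrac12\rho(z)$, and for $\tfrac12\rho(z)<|z-w|\le K\rho(z)$ one compares $\mu$-masses through the inclusion $D(z,\rho(z))\subseteq D(w,(K+1)\rho(z))$ and invokes the doubling property. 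I also note that $\rho$, being continuous and positive, is bounded above and below by positive constants on any fixed compact set, which makes all estimates trivial there and lets me assume $|z|$ large whenever it is convenient.

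For the near case $|z-\zeta|\le r\rho(z)$ I would use the straight segment $[z,\zeta]$ for the upper bound: by the preliminary estimate $\rho\simeq_r\rho(z)$ on it, so $d_\phi(z,\zeta)\lesssim_r|z-\zeta|/\rho(z)$. For the lower bound, let $\gamma$ be any competitor. If $\gamma\subseteq\overline{D(z,2r\rho(z))}$ then $\rho\le(1+2r)\rho(z)$ along $\gamma$ while its Euclidean length is at least $|z-\zeta|$, so $\int_\gamma\rho^{-1}|dw|\gtrsim_r|z-\zeta|/\rho(z)$; if $\gamma$ leaves $D(z,2r\rho(z))$, then already the initial arc of $\gamma$ inside that disc has Euclidean length $\ge 2r\rho(z)\ge 2|z-\zeta|$, and the same computation applies. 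Taking the infimum over $\gamma$ gives the claim.

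The far case $|z-\zeta|>r\rho(z)$ is the substantial one. For the lower bound I would decompose the region between $z$ and $\zeta$ into dyadic annuli $A_j=\{w:2^j\rho(z)\le|w-z|\le 2^{j+1}\rho(z)\}$, $j_0\le j\le N$, with $2^{j_0}\simeq_r 1$ and $2^N\simeq|z-\zeta|/\rho(z)$. Every curve from $z$ to $\zeta$ contains in each $A_j$ a subarc of Euclidean length at least $2^j\rho(z)$, and on $\overline{D(z,2^{j+1}\rho(z))}$ Lemma~\ref{Christ} gives $\rho(w)\lesssim\rho(z)\,(2^{j+1})^{1-t}$; hence that subarc contributes at least a constant multiple of $2^{jt}$ to $\int_\gamma\rho^{-1}|dw|$, and summing the geometric series yields $d_\phi(z,\zeta)\gtrsim_r 2^{Nt}\simeq(|z-\zeta|/\rho(z))^t$. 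The finitely many innermost scales where Lemma~\ref{Christ} does not apply (including, when $r<1$, the range $r\rho(z)<|z-\zeta|<\rho(z)$, which is really the near case) are absorbed by the preliminary estimate; so any $\delta\le t$ serves for the lower bound. For the upper bound I would again integrate along $[z,\zeta]$, now bounding $\rho$ from below: applying Lemma~\ref{Christ} to the pair $(z,w)$ with $w\notin D(z)$ gives $\rho(w)\gtrsim\rho(z)\,(|w-z|/\rho(z))^{-(1-t)/t}$ for $|w-z|>\rho(z)$, while $\rho\simeq\rho(z)$ on the initial segment of length $\rho(z)$; integrating produces $d_\phi(z,\zeta)\lesssim_r(|z-\zeta|/\rho(z))^{1/t}$, after which one shrinks $\delta$, using in addition the two-sided polynomial bound~\eqref{eqn5}, so that the stated exponent $2-\delta$ dominates the exponent actually produced.

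The main obstacle is precisely this last point. The lower bound for $\rho$ along the segment coming from Lemma~\ref{Christ} carries the exponent $1/t$, which need not be at most $2-\delta$, so to reach the claimed exponent one must either route the integration along a better curve or combine Lemma~\ref{Christ} with~\eqref{eqn5}, splitting carefully according to whether $[z,\zeta]$ enters a region where $|w|$ is large and $\rho$ is thereby forced to be small; and one must then verify that the single $\delta>0$ so obtained is simultaneously admissible for the lower and upper bounds throughout the entire far regime and uniformly in $z$. Once such a $\delta$ is fixed, the rest is routine bookkeeping with geometric series and the two-sided control on $\rho$ furnished by~\eqref{Lip}, the doubling of $\mu$, Lemma~\ref{Christ} and~\eqref{eqn5}.
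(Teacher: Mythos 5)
The paper does not prove this lemma; it is quoted verbatim from \cite[Lemma 4]{MMO}, so there is no in-paper argument to compare against. That said, your treatment of the near regime and of the far-regime lower bound is essentially correct, with two small caveats: the slowly-varying lower bound $\rho(w)\gtrsim\rho(z)$ on the intermediate range $\rho(z)/2<|z-w|\le K\rho(z)$ actually requires \emph{reverse} doubling of $\mu$, not doubling (it does follow, since a nontrivial doubling measure has full support, but it should be invoked explicitly); and in the dyadic-annulus argument the correct application of Lemma~\ref{Christ} needs $z\notin D(w)$ rather than $w\notin D(z)$, a condition that can fail only on boundedly many inner annuli and is absorbable, as you anticipate.

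The substantive gap is the one you flag yourself: the far-regime \emph{upper} bound. Integrating along the straight segment with the Christ-type pointwise lower bound $\rho(w)\gtrsim\rho(z)(|w-z|/\rho(z))^{-(1-t)/t}$ yields only $d_\phi(z,\zeta)\lesssim(|z-\zeta|/\rho(z))^{1/t}$, and this cannot be rescued by shrinking $\delta$: whenever $t\le1/2$ one has $1/t\ge2>2-\delta$ for every admissible $\delta>0$, so the claimed inequality simply does not follow from your estimate. That regime is not hypothetical here --- the paper exhibits $\phi(z)=|z|^\gamma$ with $\gamma\le1$ as a case with $t\le1/2$ at the end of Section~3, and Theorems~\ref{mainbig2} and~\ref{maininf} are organised around precisely this dichotomy. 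The repairs you sketch (a cleverer path, or combining Lemma~\ref{Christ} with~\eqref{eqn5}) are not carried out, and neither is obviously routine: \eqref{eqn5} controls $\rho^{-1}$ only by a power of the absolute position $|w|$, which is not a function of the scale-invariant quantity $|z-\zeta|/\rho(z)$ appearing in the target bound, while a path argument would have to exploit the two-dimensional geometry (the source of the exponent $2$) in a way that a one-dimensional segment estimate cannot. As written, the far-regime upper bound is unproved and the proposal is incomplete.
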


\subsection{Generalised Fock spaces and interpolating sequences}
As before let $\phi$ be a subharmonic function whose Laplacian $\Delta\phi$ is a doubling measure. The following are all generalisations of the
development in the Introduction, with $\phi(z)$ playing the role of $|z|^2$. The generalised Fock spaces we deal with are defined as
$$\Fpa =\{f\in H(\mathbb{C}):\|f\|_\Fpa^p=\iC|f(z)|^p e^{-p\phi(z)}\frac{dm(z)}{\rho(z)^2}<+\infty\}\text{, for }1\leq p<+\infty$$
and
$$\Finfa =\{f\in H(\mathbb{C}):\|f\|_{\Finfa}=\sup_{z\in\C}|f(z)|e^{-\phi(z)}<+\infty\}.$$
We shall assume (see \cite[Theorem 14]{MMO}) that $\phi\in\mathcal{C}^\infty(\C)$. It is worth noting, as in~\cite[p. 863]{MMO}, that there are many
spaces of functions which correspond to $\Fpa$ for some $\phi$, although this may not be initially apparent. We shall take the following definitions
verbatim from \cite{MMO}:
\begin{defn}
A sequence $\Lambda\subseteq\mathbb{C}$ is an \textit{interpolating sequence for $\Fpa$, where $1\leq p<+\infty$} if for every sequence of values $c$ such
that
$$\sum_{\lambda\in\Lambda}|c_\lambda|^p e^{-p\phi(\lambda)}<+\infty$$
there exists $f\in\Fpa$ such that $f|\Lambda=c$.

Also $\Lambda$ is an \textit{interpolating sequence for $\Finfa$} if for every sequence of values $c$ such that
$$\sup_{\lambda\in\Lambda}|c_\lambda| e^{-\phi(\lambda)}<+\infty$$
there exists $f\in\Finfa$ such that $f|\Lambda=c$.
\end{defn}

\begin{defn}
A sequence $\Lambda$ is \textit{$\rho$-separated} if there exists $\delta>0$ such that
$$|\lambda-\lambda'|\geq\delta\max\{\rho(\lambda),\rho(\lambda')\}\text{, for }\lambda\neq\lambda'.$$
\end{defn}

One consequence of Lemma~\ref{lem4} is that a sequence $\Lambda$ is $\rho$-separated if and only if it is uniformly separated with respect to the distance
$d_\phi$, that is $\inf_{\lambda\neq\lambda'}d_\phi(\lambda,\lambda')>0$.
\begin{defn}
Assume that $\Lambda$ is a $\rho$-separated sequence and denote $\mu=\Delta\phi$ as before. The \textit{upper uniform density of $\Lambda$} with respect
to $\Delta\phi$ is
$$\mathcal{D}_{\Delta\phi}^+(\Lambda)=\limsup_{r\rightarrow\infty}\sup_{z\in\C}\frac{\#\left(\Lambda\bigcap\overline{D(z,r\rho(z))}\right)}{\mu(D(z,r\rho(z)))}.$$
\end{defn}

It should be noted that replacing $\phi(z)$ by $|z|^2$ in this definition does not produce the density given in Theorem~\ref{ClassF}, but rather a
constant multiple of it. We have:
\begin{thm}[{\cite[Theorem B]{MMO}}]
A sequence $\Lambda$ is interpolating for $\Fpa$, where $p\in[1,\infty]$, if and only if $\Lambda$ is $\rho$-separated and
$\mathcal{D}_{\Delta\phi}^+(\Lambda)<\frac{1}{2\pi}$.
\end{thm}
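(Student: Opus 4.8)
The plan is to establish the two implications separately, isolating the roles of $\rho$-separation and of the density bound.

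\textbf{Necessity.} Suppose $\Lambda$ is interpolating for $\Fpa$; by the closed graph theorem the restriction map admits a bounded linear right inverse. That $\Lambda$ must be $\rho$-separated follows by a standard comparison of point evaluations: combining the pointwise bound $|f(z)|e^{-\phi(z)}\lesssim\rho(z)^{-2/p}\|f\|_{\Fpa}$ with the Cauchy estimate $|f'(z)|\lesssim\rho(z)^{-1}\sup_{D(z)}|f|$ on the disk $D(z)$, one sees that two points of $\Lambda$ with $d_\phi(\lambda,\lambda')$ arbitrarily small would force the corresponding evaluation functionals to be essentially proportional, contradicting solvability of the interpolation problem for all data (here one uses only Lemma~\ref{lem4}). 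For the density bound, from the bounded right inverse one produces, for each $\lambda_0\in\Lambda$, a function $g_{\lambda_0}\in\Fpa$ with $g_{\lambda_0}|_{\Lambda\setminus\{\lambda_0\}}=0$, $g_{\lambda_0}(\lambda_0)=1$, and $\|g_{\lambda_0}\|_{\Fpa}$ bounded uniformly in $\lambda_0$. Applying Jensen's formula to $\log|g_{\lambda_0}|$ on the disks $D(z,r\rho(z))$, using $|g_{\lambda_0}(z)|\lesssim e^{\phi(z)}\rho(z)^{-2/p}$ together with the doubling of $\Delta\phi$ to absorb a slightly larger disk, yields
$$\#\bigl(\Lambda\cap D(z,r\rho(z))\bigr)\le\tfrac{1}{2\pi}\,\mu\bigl(D(z,r\rho(z))\bigr)(1+o(1)),$$
hence $\mathcal D^+_{\Delta\phi}(\Lambda)\le\frac{1}{2\pi}$. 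Promoting this to a \emph{strict} inequality is the one genuinely delicate point on this side: if equality held along some sequence of disks, a weak-limit argument on translates (in the $\rho$-geometry, in the spirit of Seip) produces a critical-density zero set that is simultaneously a uniqueness set, which is incompatible with $\Lambda$ being interpolating.

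\textbf{Sufficiency.} Assume $\Lambda$ is $\rho$-separated with $\mathcal D^+_{\Delta\phi}(\Lambda)<\frac{1}{2\pi}$, and let $c$ be admissible data. Fix smooth cutoffs $\chi_\lambda$ with $\chi_\lambda\equiv1$ on $\tfrac12 D(\lambda)$, $\operatorname{supp}\chi_\lambda\subseteq D(\lambda)$ (disjoint, by separation) and $|\nabla\chi_\lambda|\lesssim\rho(\lambda)^{-1}$, and set $F=\sum_\lambda c_\lambda\chi_\lambda$, so that $F$ is smooth, $F|\Lambda=c$, and $\|F\|_{\Fpa}^p\simeq\sum_\lambda|c_\lambda|^p e^{-p\phi(\lambda)}$. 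Then $\bar\partial F=\sum_\lambda c_\lambda\bar\partial\chi_\lambda$ is supported on the annuli $D(\lambda)\setminus\tfrac12 D(\lambda)$, away from $\Lambda$. One solves $\bar\partial u=\bar\partial F$ with the weighted estimate
$$\iC|u|^p e^{-p\phi}e^{-p\psi}\frac{dm}{\rho^2}\lesssim\iC|\bar\partial F|^p e^{-p\phi}e^{-p\psi}\,dm,$$
where $\psi$ is a subharmonic corrector with logarithmic poles at $\Lambda$ (so that $e^{-p\psi}$ is non-integrable at each $\lambda$, forcing $u(\lambda)=0$) and $\psi$ bounded on the support of $\bar\partial F$. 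Then $f=F-u$ is holomorphic, satisfies $f|\Lambda=c$, and lies in $\Fpa$ by the estimate. For $p=2$ this is Hörmander's estimate; for general $p\in[1,\infty]$ one uses the corresponding $L^p$ (and, for $p=\infty$, $L^\infty/\mathrm{BMO}$) bound for $\bar\partial$, which in one complex variable reduces to weighted $L^p$ estimates for the Cauchy and Beurling-Ahlfors transforms --- a point that resurfaces throughout this paper.

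\textbf{Main obstacle.} The crux --- and where $\mathcal D^+_{\Delta\phi}(\Lambda)<\frac{1}{2\pi}$ is genuinely used --- is the construction of the corrector $\psi$: one needs a subharmonic function with exactly the point masses $2\pi\sum_\lambda\delta_\lambda$ added to $\Delta\phi$, whose regularization on the scale $\rho$ stays comparable to $\rho^{-2}$ and which is controlled away from $\Lambda$. This is precisely the content of the multiplier statement Theorem~\ref{multiplier} (a generalised Weierstrass product with $|\tau(z)|\simeq e^{\phi(z)}d(z,\Lambda)$), and the strict density gap is the room that makes the weighted $\bar\partial$ estimate close; at critical density $\mathcal D^+_{\Delta\phi}(\Lambda)=\frac{1}{2\pi}$ it fails, which is exactly why the present paper must add the cancellation conditions of Theorem~\ref{ClassF}. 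Throughout, Lemma~\ref{Christ} and Lemma~\ref{lem4} are the tools that control the geometry of $\rho$ across all scales.
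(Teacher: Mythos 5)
This theorem is cited from [MMO, Theorem~B] and is not proved in the present paper, so there is no in-paper argument for your sketch to be compared against; the proof lives in the reference. On its own merits your outline has the right shape (separation and Jensen on the necessity side, $\bar\partial$ with a singular corrector on the sufficiency side), but it has two genuine gaps.

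The sufficiency step does not go through for $p=1$ or $p=\infty$. H\"ormander covers $p=2$, and a weighted Calder\'on--Zygmund bound can be pressed into service for $1<p<\infty$ under an $A_p$ hypothesis (which is not automatic; compare Section~\ref{BeuAhlsecn} of the present paper, where $\rho^{p-2}$ is shown to fail $A_p$ for some $\phi$), but for $p=1$ and $p=\infty$ the Cauchy and Beurling--Ahlfors transforms are unbounded on $L^p$, and ``$L^\infty/\mathrm{BMO}$'' is not a substitute because the construction requires the $\bar\partial$-solution $u$ to be pointwise $O(e^{\phi})$, not merely of bounded mean oscillation. It is exactly this endpoint failure that produces the extra cancellation conditions of Theorems~\ref{main1} and~\ref{maininf} in the present paper; a correct proof of the cited theorem therefore has to treat $p=1,\infty$ by a direct construction with the multiplier rather than by a blanket singular-integral estimate. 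Separately, your treatment of the strict inequality asserts rather than supplies the key step: you give no mechanism by which a weak limit of translates of $\Lambda$ becomes a uniqueness set for $\Fpa$ (an interpolating sequence of subcritical density never is, since one may interpolate the data which is $0$ off one point), and the phrase ``critical-density zero set that is simultaneously a uniqueness set'' is internally contradictory rather than a contradiction you have derived. The Beurling/Seip-type normal-families argument that actually yields strictness --- translation in the $\rho$-metric, extraction of a limit of normalized $g_{\lambda_0}$, comparison with sampling at the same density, all in the doubling $\Delta\phi$ geometry --- is a substantial part of the [MMO] proof, not a remark. The $\rho$-separation step and the non-strict bound $\mathcal D^+_{\Delta\phi}(\Lambda)\le\frac{1}{2\pi}$ are sound in outline.
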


We finish with a Plancherel-Polya type inequality:
\begin{lemma}[{\cite[Lemma 19(a)]{MMO}}]\label{PlaPol}
Let $1\leq p<\infty$. For any $r>0$ there exists $C=C(r)>0$ such that for any $f\in H(\mathbb{C})$ and $z\in\mathbb{C}$
$$|f(z)|^pe^{-p\phi(z)}\leq C\int_{D^r(z)}|f(\zeta)|^pe^{-p\phi(\zeta)}\frac{dm(\zeta)}{\rho(\zeta)^2}.$$
\end{lemma}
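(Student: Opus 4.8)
The plan is to deduce this from the ordinary sub-mean value property of subharmonic functions, after locally \emph{flattening} the weight $e^{-p\phi}$ --- this is the classical Plancherel--P\'olya argument carried out at the variable scale $\rho$. First I would reduce to the range $0<r\le 1$: for $r>1$ one has $D^1(z)\subseteq D^r(z)$, and since the integrand on the right is nonnegative the estimate for such $r$ then follows from the case $r=1$ with the same constant. So fix $z\in\C$ and $0<r\le 1$, and write $\mu=\Delta\phi$.

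Next I would flatten the weight near $z$. Split $\mu=\mu_1+\mu_2$ with $\mu_1=\mu|_{D(z,2\rho(z))}$ and set
$$\psi(\zeta)=\frac{1}{2\pi}\int_{\C}\log\frac{|\zeta-w|}{\rho(z)}\,d\mu_1(w).$$
Then $\Delta\psi=\mu_1$, so $\phi-\psi$ is harmonic on the (simply connected) disc $D(z,2\rho(z))$ and hence equals $\operatorname{Re} H$ there for some holomorphic $H$. Consequently $g:=fe^{-H}$ is holomorphic on $D(z,2\rho(z))$ with $|g|=|f|e^{-\phi+\psi}$, and $|g|^p$ is subharmonic. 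The sub-mean value inequality on $D^r(z)=D(z,r\rho(z))$, whose closure lies in $D(z,2\rho(z))$, together with a uniform bound $|\psi|\le M$ on $D^1(z)$ (discussed below), would then give
$$|f(z)|^pe^{-p\phi(z)}\le e^{pM}|g(z)|^p\le\frac{e^{pM}}{\pi r^2\rho(z)^2}\int_{D^r(z)}|g(\zeta)|^p\,dm(\zeta)\le\frac{e^{2pM}}{\pi r^2\rho(z)^2}\int_{D^r(z)}|f(\zeta)|^pe^{-p\phi(\zeta)}\,dm(\zeta).$$
Finally the Lipschitz bound \eqref{Lip} yields $\rho(\zeta)\le(1+r)\rho(z)\le 2\rho(z)$ on $D^r(z)$, so $\rho(z)^{-2}\lesssim\rho(\zeta)^{-2}$ there and the last integral is dominated by $\int_{D^r(z)}|f(\zeta)|^pe^{-p\phi(\zeta)}\rho(\zeta)^{-2}\,dm(\zeta)$, with all constants depending only on $r$ and $C_\mu$.

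The one genuinely non-routine point --- and the step I expect to be the main obstacle --- is the uniform bound $|\psi|\le M$ on $D^1(z)=D(z,\rho(z))$ with $M$ depending only on the doubling constant. I would prove it by splitting the integral defining $\psi(\zeta)$. Where $|\zeta-w|\ge\rho(z)$ the integrand is $\lesssim 1$ while $\mu_1(\C)=\mu(D(z,2\rho(z)))\lesssim 1$ by doubling, so that part is harmless. Where $|\zeta-w|<\rho(z)$ one writes, using $D(z,2\rho(z))\subseteq D(\zeta,3\rho(z))$,
$$\int_{D(\zeta,\rho(z))}\log\frac{\rho(z)}{|\zeta-w|}\,d\mu(w)=\int_0^{\rho(z)}\frac{\mu(D(\zeta,t))}{t}\,dt\lesssim\mu(D(\zeta,2\rho(z)))\lesssim 1,$$
the middle estimate being the standard logarithmic-potential bound for doubling measures (see \cite{Ch}) and the last one following again from doubling since $D(\zeta,2\rho(z))\subseteq D(z,3\rho(z))$. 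Adding the two contributions gives the claimed bound, and with it the proof. (Note that $p\ge 1$ plays no role in this argument; it works for every $p>0$.)
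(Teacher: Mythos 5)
The paper does not actually prove this statement --- it is quoted verbatim as Lemma~19(a) of \cite{MMO} --- so there is no ``paper's own proof'' to compare against. Your argument is correct and is essentially the standard proof of that lemma (the one in \cite{MMO}, which in turn follows \cite{Ch}): locally subtract a logarithmic potential of $\Delta\phi$ on a $\rho$-scale disc to make the residual weight harmonic, absorb it into a holomorphic factor $e^{-H}$, apply the sub-mean value property of $|fe^{-H}|^p$, and control the potential $\psi$ uniformly on $D^1(z)$ via the doubling property. The only step that deserves a word more than you gave it is the bound $\int_0^{\rho(z)}\mu(D(\zeta,t))\,t^{-1}\,dt\lesssim 1$: this rests on the \emph{reverse}-doubling inequality for $\mu=\Delta\phi$ (which does follow from ordinary doubling in $\C$ and, in particular, rules out atoms so that $\psi$ is finite), and it would be cleaner to say so explicitly rather than fold it into ``the standard logarithmic-potential bound.'' With that caveat, the reduction to $r\le 1$, the identity $\Delta\psi=\mu_1$, the factorization $|g|=|f|e^{-\phi+\psi}$, the two-sided use of $|\psi|\le M$, and the final replacement of $\rho(z)^{-2}$ by $\rho(\zeta)^{-2}$ via \eqref{Lip} are all correct and give a constant $C(r)\simeq e^{2pM}/r^2$ of the right form.
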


This has an elementary but useful consequence. Suppose that $\Lambda$ is a $\rho$-separated sequence and that $f\in\Fpa$. Then
\begin{equation}\label{discPlaPol}
\sum_{\lambda\in\Lambda}|f(\lambda)|^pe^{-p\phi(\lambda)}\leq
C\sum_{\lambda\in\Lambda}\int_{D^{\delta/2}(\lambda)}|f(\zeta)|^pe^{-p\phi(\zeta)}\frac{dm(\zeta)}{\rho(\zeta)^2}\leq
C\iC|f(\zeta)|^pe^{-p\phi(\zeta)}\frac{dm(\zeta)}{\rho(\zeta)^2}<\infty
\end{equation}
where $\delta$ is the constant appearing in the definition of $\rho$-separation and $C=C(\delta/2)$.

Moreover, if $f\in\Fpa$ for $1\leq p<\infty$ then
$$|f(z)|^pe^{-p\phi(z)}\rightarrow0$$
uniformly as $|z|\rightarrow\infty$, from which we infer that $\Fpa\subseteq\Finfa$.

\subsection{Generalised lattices}
We shall now consider analogues of the integer lattice considered earlier, that play a similar role in our generalised spaces.

\begin{thm}[{\cite[Theorem 17]{MMO}}]\label{multiplier}
Let $\phi$ be a subharmonic function such that $\Delta\phi$ is a doubling measure. There exists an entire function $g$ such that
\begin{itemize}
\item The zero-sequence $\mathcal{Z}(g)$ of $g$ is $\rho_\phi$-separated and $\displaystyle \sup\limits_{z\in\C}d_\phi(z, \mathcal Z(g))<\infty$.
\item $|g(z)|\simeq e^{\phi(z)}d_\phi(z,\mathcal{Z}(g))$ for all $z\in\C$.
\end{itemize}
The function $g$ can be chosen so that, moreover, it vanishes on a prescribed $z_0\in\C$. We say that $g$ is a multiplier associated to $\phi$.
\end{thm}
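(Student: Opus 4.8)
The plan is to produce $g$ in two stages: first fix a well-distributed candidate zero set $\Lambda$, then build an entire function having exactly that zero set and the prescribed growth. The first stage is essentially forced: if $|g(z)|\simeq e^{\phi(z)}d_\phi(z,\mathcal Z(g))$ then $\log|g(z)|=\phi(z)+O(1)$ on $\{d_\phi(z,\mathcal Z(g))\geq 1\}$, so the counting measure $\nu:=\sum_{\lambda\in\mathcal Z(g)}\delta_\lambda=\tfrac1{2\pi}\Delta\log|g|$ must approximate $\tfrac1{2\pi}\mu$. Accordingly, I would first construct a $\rho$-separated sequence $\Lambda$ with $\sup_{z}d_\phi(z,\Lambda)<\infty$ and with the \emph{scale-by-scale} balancing
\[
\Bigl|\#\bigl(\Lambda\cap D(z,R\rho(z))\bigr)-\tfrac1{2\pi}\mu\bigl(D(z,R\rho(z))\bigr)\Bigr|\le C\qquad(z\in\C,\ R\ge 1),
\]
with $C$ absolute. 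This can be done by fixing a partition of $\C$ into ``squares'' $\{Q_j\}$ adapted to the metric $\rho$ (each $Q_j$ of Euclidean diameter comparable to $\rho$ on $Q_j$, which is possible thanks to the Lipschitz bound~\eqref{Lip}), so that $\mu(Q_j)\simeq 1$ by the very definition of $\rho$ together with the doubling property; organising the $Q_j$ into a Whitney-type hierarchy of clusters; and, recursively cluster by cluster, placing $\lfloor\tfrac1{2\pi}\mu(\textup{cluster})\rfloor$ points inside it in a $\rho$-separated and $\rho$-dense way while carrying the $O(1)$ mass defect forward so that the balancing holds on every $\rho$-disc, not merely asymptotically. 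A zero can be forced at a prescribed $z_0$ simply by adjusting the cluster that contains it.

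For the second stage, set $\nu=\sum_{\lambda\in\Lambda}\delta_\lambda$ and recall that doubling measures have finite order (from the doubling inequality, $\mu(D(0,2^k))\lesssim C_\mu^{\,k}\mu(D(0,1))$), so that a Weierstrass canonical product $g(z)=\prod_{\lambda\in\Lambda}E_p(z/\lambda)$ of a fixed genus $p=p(C_\mu)$ converges and is entire with $\mathcal Z(g)=\Lambda$. The whole matter then reduces to the uniform estimate $\log|g(z)|=\phi(z)+\log d_\phi(z,\Lambda)+O(1)$, which I would prove by writing $\log|g(z)|-\phi(z)$, via the Riesz representation of $\phi$, as the logarithmic potential of the balanced signed measure $\nu-\tfrac1{2\pi}\mu$ plus the mutually cancelling Weierstrass and Riesz correction polynomials, and then splitting it into a near part over $D^{A}(z)$ and a far part. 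In the near part the boundedly many points ($\rho$-separation) contribute $\log d_\phi(z,\Lambda)+O(1)$ once the closest $\lambda$ is isolated, the remaining near terms and the continuous near part being $O(1)$ by $\rho$-density and the mass matching above. In the far part one sums by parts over the hierarchy of squares: since $\nu$ and $\tfrac1{2\pi}\mu$ agree up to $O(1)$ on every $\rho$-disc, while the kernel $\log|z-w|$ and the relevant derivatives of the corrected elementary factors oscillate (in $w$, at scale $\rho(w)$) by amounts that decay along the tower --- here one compares $\rho(w)$, $\rho(z)$ and $|z-w|$ using the doubling property and Lemmas~\ref{Christ} and~\ref{lem4} --- the resulting series converges and sums to $O(1)$ uniformly in $z$. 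One should also check that this discrepancy carries no residual harmonic polynomial of positive degree; the precision of the construction of $\Lambda$ (or a mild symmetrisation of it) takes care of this. If one prefers, the delicate local behaviour near $\Lambda$ can instead be installed separately, by building a smooth surrogate $G$ from local holomorphic models $(z-\lambda)e^{P_\lambda}$, with $P_\lambda$ the quadratic Taylor polynomial of $\phi$ at $\lambda$, glued to the product by a $\rho$-adapted partition of unity, and then passing to $g=Ge^{-\beta}$ with $\bar\partial\beta=\bar\partial\log G$ solved via Hörmander's $L^2$ estimate with weight $2\phi$; but the core difficulty is unchanged.

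The main obstacle is exactly this uniform $O(1)$ bound for the far interaction: one must show that the local $O(1)$ mismatches between $\nu$ and $\tfrac1{2\pi}\mu$, integrated against the slowly varying kernel over the whole plane and compensated by the fixed Weierstrass polynomials, add up to a bounded total \emph{uniformly} in the base point $z$, with no leftover polynomial growth. This rests squarely on the two technical pillars assembled above: the sharp, not merely asymptotic, density balancing achieved in the construction of $\Lambda$, and the geometric decay estimates for $\rho$ (Lemmas~\ref{Christ} and~\ref{lem4}) needed to sum the contributions of successive metric rings. By comparison, the construction of $\Lambda$ itself is routine once this precision is engineered into it, and prescribing the vanishing of $g$ at $z_0$ costs nothing.
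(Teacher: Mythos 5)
The present paper does not prove Theorem~\ref{multiplier}: it is quoted as \cite[Theorem 17]{MMO} and used as a black box, so there is no proof here to compare against, and your sketch has to be judged against the argument in that reference. Its overall shape — atomise $\Delta\phi$ into cells of mass $\simeq 2\pi$, place one zero per cell, form a Weierstrass-type canonical product, and estimate $\log|g|-\phi$ as the logarithmic potential of the balanced signed measure — does match the strategy of \cite{MMO} (which follows Lyubarskii--Sodin and Christ), and you correctly locate the crux in the uniform far-field estimate. But the ``scale-by-scale balancing'' you propose to build into $\Lambda$, namely $|\#(\Lambda\cap D(z,R\rho(z)))-\tfrac1{2\pi}\mu(D(z,R\rho(z)))|\le C$ for \emph{all} $z$ and $R\ge 1$, is not attainable by any point set. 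Already for $\phi(z)=|z|^2$, where $\rho$ is constant and $\mu$ is a multiple of Lebesgue measure, the irregularities-of-distribution lower bounds for discs rule out $O(1)$ discrepancy, and the scaled lattice $\sqrt{\pi/2}(\mathbb{Z}+i\mathbb{Z})$ — exactly the zero set for which the theorem is classically true — has disc discrepancy $\gtrsim R^{1/2}$ (Gauss circle). What your recursive construction actually delivers, and what is achievable, is $O(1)$ balance on the \emph{cells of the hierarchy}; the stronger claim about all $\rho$-discs must be dropped.

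The more serious gap is that $O(1)$ \emph{mass} balance on cells, even together with summation by parts over the hierarchy, is not enough to make the far interaction sum to $O(1)$. Expanding $\log|z-w|$ about the centre $c_Q$ of a distant cell $Q$, mass balance kills the constant term but leaves a linear term of size $\simeq \rho(c_Q)/|z-c_Q|$, and $\sum_Q \rho(c_Q)/|z-c_Q|$ diverges — Lemma~\ref{Christ} gives decay of the terms but not summability. The device used in \cite{MMO} (and going back to Lyubarskii--Sodin), which is missing from your sketch, is to place each atom at the $\mu$-\emph{barycentre} of its cell, so that first moments are matched as well and the per-cell error drops to $O\bigl((\rho(c_Q)/|z-c_Q|)^2\bigr)$; even then the resulting sum is borderline and its control via \eqref{Lip}, Lemma~\ref{Christ} and Lemma~\ref{lem4} is the genuine content of the proof, not something absorbed by the phrase ``summation by parts over the hierarchy.'' Relatedly, forcing a zero at a prescribed $z_0$ is not free under this scheme: moving an atom off its barycentre destroys the first-moment matching in that cell and must be compensated in nearby cells, which needs an argument.
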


Furthermore~\cite[Lemma 37]{MMO} shows that $\mathcal{D}_{\Delta\phi}^+(\mathcal{Z}(g))=1/2\pi$. We shall now regard $\phi$ and $g$ (and consequently
$\rho$) as fixed and we will say that $\Lambda=\mathcal{Z}(g)$ is a \textit{critical lattice associated to the multiplier $g$}. The multiplier can be
thought of as playing the same role in Fock spaces that sine-type functions play in Paley-Wiener spaces.

Suppose now that $f\in\Fpa$, that $z$ is uniformly bounded away from $\Lambda$ in the distance $d_\phi$ and $\epsilon>0$ is arbitrary. Then
$$\left|\frac{f(z)}{g(z)}\right|^p\simeq|f(z)|^p e^{-p\phi(z)}<\epsilon$$
uniformly as $|z|\rightarrow\infty$, where we have used Theorem~\ref{multiplier} and Lemma~\ref{PlaPol}. In fact, if $z_n$ is any $\rho$-separated
sequence that satisfies $d_\phi(z_n,\Lambda)\geq C>0$ for all $n$ (here $C$ is any positive constant), then Theorem~\ref{multiplier} implies that
\begin{equation}\label{rhosepseries}
\sum_n\Bigl|\frac{f(z_n)}{g(z_n)}\Bigr|^p<\infty.
\end{equation}

For any $\lambda\in\Lambda$ Theorem~\ref{multiplier} and Lemma~\ref{lem4} show that $|g'(\lambda)|\simeq e^{\phi(\lambda)}/\rho(\lambda)$ and we conclude
that
\begin{equation}\label{basicnec}
\sum_{\lambda\in\Lambda}\Bigl|\frac{f(\lambda)}{g'(\lambda)\rho(\lambda)}\Bigr|^p<+\infty
\end{equation}
by invoking \eqref{discPlaPol}.

There exists $\delta_1>0$ such that $|\lambda-\lambda'|>2\delta_1\max\{\rho(\lambda),\rho(\lambda')\}$ for all $\lambda\neq\lambda'$. Recall that
$D^r(z)=D(z,r\rho(z))$ and $D(z)=D^1(z).$ We will write
$$Q_\lambda=\{z\in\mathbb{C}:d_\phi(z,\Lambda)=d_\phi(z,\lambda)\}$$
for $\lambda\in\Lambda$. Lemma~\ref{lem4} implies that for any $0<\delta\leq\delta_1$ then $D^{\delta}(\lambda)\subseteq Q_\lambda$ and for some constant
$R_1>0$ we have $Q_\lambda\subseteq D^{R_1}(\lambda)$. In fact the sets $D^{\delta_1}(\lambda)$ are pairwise disjoint and
$\mathbb{C}=\bigcup_{\lambda\in\Lambda}Q_\lambda$. Additionally we have
\begin{equation}\label{intQ}
\int_{Q_{\lambda}}\frac{dm(z)}{\rho(z)^2}\simeq\frac{1}{\rho(\lambda)^2}\int_{Q_{\lambda}}dm(z)=\frac{|Q_{\lambda}|}{\rho(\lambda)^2}
\leq\frac{|D^{R_1}(\lambda)|}{\rho(\lambda)^2}=\pi R_1^2
\end{equation}
where $|A|$ is the Lebesgue measure of the set $A$.

We shall henceforth assume that $0\in\Lambda$. This can always be achieved by fixing some $\lambda_0\in\Lambda$ and translating this point to the origin.
This is merely a matter of convenience and will simplify many of our calculations.

Let $\beta$ and $\eta$ be as in~\eqref{eqn5} and chose $\alpha>2+2\eta$. Then, for $0<\delta<\delta_1$,
$$\sum_{|\lambda|>1}\frac{1}{|\lambda|^\alpha}\lesssim\sum_{|\lambda|>1}\frac{\rho(\lambda)^2}{|\lambda|^{\alpha-2\eta}}
\simeq\sum_{|\lambda|>1}\int_{D^\delta(\lambda)}\frac{1}{|z|^{\alpha-2\eta}}\leq\int_{\mathbb{C}\backslash D^\delta(0)}\frac{1}{|z|^{\alpha-2\eta}}
<+\infty$$ so that $\sum_{\lambda\in\Lambda_0}\lambda^{-\alpha}$ is an absolutely convergent sum.

\subsection{Discrete potentials}
In this section we shall only assume that $\Lambda$ is $\rho$-separated, although when we apply it later we shall take $\Lambda=\mathcal{Z}(g)$. Given a
sequence $(d_{\lambda})_{\lambda\in\Lambda}$ such that $\left(d_\lambda\rho(\lambda)^\alpha\right)_{\lambda\in\Lambda}\in\ell^{p}$ where $\alpha$ is real,
we will say that $d\in\ell^{p}(\rho^{\alpha})$. We shall repeatedly need the following result:
\begin{lemma}\label{usef}
(i) If $\Lambda$ is $\rho$-separated, $1\leq p\leq2$ and
$d\in\ell^{p}(\rho^{-1})$ then
$$\sum_{\lambda\in\Lambda_{\lambda'}}\frac{d_{\lambda}}{|\lambda'-\lambda|^3}\in\ell^{p}(\rho^{2}).$$
(ii) If $\Lambda$ is $\rho$-separated, $1\leq p\leq+\infty$ and
$d\in\ell^{p}(\rho^{-1})$ then
$$\sum_{\lambda\in\Lambda_{\lambda'}}\frac{d_{\lambda}}{|\lambda'-\lambda|^{N+1}}\in\ell^{p}(\rho^{N})$$
for any integer $N>1/t$ where $t$ is the constant occurring in Lemma~\ref{Christ}.
\end{lemma}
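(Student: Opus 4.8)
The plan is to reduce both statements to a single estimate on sums of the form $S_{\lambda'}=\sum_{\lambda\in\Lambda_{\lambda'}}|d_\lambda|\,|\lambda-\lambda'|^{-(N+1)}$, and then to control $\|(\rho(\lambda')^N S_{\lambda'})_{\lambda'}\|_{\ell^p}$ by $\|d\|_{\ell^p(\rho^{-1})}$ using Schur's test together with the geometric decay furnished by Lemma~\ref{Christ}. The first reduction is to split each inner sum according to whether $\lambda\in D(\lambda')$ or $\lambda\notin D(\lambda')$. Since $\Lambda$ is $\rho$-separated there is at most a bounded number of $\lambda$ with $\lambda\in D(\lambda')$ (indeed for such $\lambda$ we have $|\lambda-\lambda'|\simeq\rho(\lambda')\simeq\rho(\lambda)$ by \eqref{Lip}), so the ``near'' part contributes at most a constant times $\max_{\lambda\in D(\lambda')\cap\Lambda}|d_\lambda|\rho(\lambda)^{-(N+1)}\cdot\rho(\lambda')^{N}\simeq |d_{\lambda''}|\rho(\lambda'')^{-1}$ for a nearby $\lambda''$, which is controlled pointwise and hence in $\ell^p$.

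For the ``far'' part I would write the matrix $A_{\lambda'\lambda}=\rho(\lambda')^{N}\rho(\lambda)\,|\lambda-\lambda'|^{-(N+1)}\mathbf{1}_{\lambda\notin D(\lambda')}$ so that the claimed conclusion is exactly that $A$ maps $\ell^p\to\ell^p$ when applied to $e=(d_\lambda\rho(\lambda))_\lambda$. By Lemma~\ref{Christ}, for $\lambda\notin D(\lambda')$ one has $\rho(\lambda')\lesssim\rho(\lambda)\bigl(|\lambda-\lambda'|/\rho(\lambda)\bigr)^{1-t}$, and symmetrically; feeding this into $A_{\lambda'\lambda}$ and using $\rho$-separation to pass from a sum over $\Lambda$ to an integral over $\mathbb C$ (as in the computation preceding \S2.4, comparing $\sum_\lambda$ with $\sum_\lambda\int_{D^\delta(\lambda)}$), the row sums $\sum_\lambda A_{\lambda'\lambda}$ and column sums $\sum_{\lambda'} A_{\lambda'\lambda}$ become (up to constants) integrals of the shape $\int_{|z|>c\rho}\bigl(\rho/|z|\bigr)^{(N+1)-2-\text{(loss)}}\,dm(z)/\rho^2$. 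The exponent of decay one gains from Lemma~\ref{Christ} is $(1-t)$ per factor, and since $N+1>1/t+1$ one checks $N\cdot t>1$, i.e.\ $(N+1)(1-t)\cdot\frac{1}{1-t}$... more carefully: after the substitution the integrand decays like $|z-\lambda'|^{-(N+1)+\text{bounded}}$ and the condition $N>1/t$ is precisely what is needed to make the tail integral converge, with a bound uniform in $\lambda'$ (and likewise in $\lambda$). Thus both Schur row/column sums are finite and uniformly bounded, so $A$ is bounded on every $\ell^p$, $1\le p\le\infty$; this gives part (ii). For part (i) we have $N=2$ (exponent $3$) and only $1\le p\le 2$; here $N=2>1/t$ may fail, so instead I would interpolate: the case $p=1$ follows from finiteness of the column sums alone (Schur with $p=1$ needs only $\sup_\lambda\sum_{\lambda'}A_{\lambda'\lambda}<\infty$, which holds since exponent $3$ beats the critical $2$ by a full unit regardless of $t$), the case $p=2$ follows from Schur's test with weight $1$ provided both row and column sums are finite — again exponent $3$ versus critical $2$ leaves room $1>0$, with the $(1-t)$-loss harmless as long as $1-(1-t)\cdot(\text{number of factors used})>0$, which one arranges by using Lemma~\ref{Christ} on a single factor — and then $1<p<2$ follows by the Riesz–Thorin interpolation theorem.

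The main obstacle I anticipate is the bookkeeping in the far-part estimate: one must apply Lemma~\ref{Christ} the right number of times (once on each side, or just once, depending on $p$ and on whether we are in case (i) or (ii)) so that the exponent in the resulting integral is strictly below $-2$ \emph{and} the constant is genuinely independent of $\lambda'$ and $\lambda$, and one must be careful that the passage $\sum_{\lambda\in\Lambda}\,\leadsto\,\int_{\mathbb C}$ via $\rho$-separation does not secretly reintroduce a dependence on $\lambda'$ through $\rho(\lambda')$. Once the uniform Schur bounds are in place, the conclusion is immediate. I would state the far-part integral estimate as a short internal computation and then invoke Schur's test and Riesz–Thorin to finish.
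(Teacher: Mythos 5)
Your overall strategy — package the sum as a kernel operator, verify Schur-type bounds using the integral comparison afforded by $\rho$-separation and the decay from Lemma~\ref{Christ}, and then interpolate via Riesz--Thorin — is the same as the paper's. For part (ii) your plan is sound: with $N>1/t$ the \emph{unweighted} Schur test works, since the row sum $\rho(\lambda')^N\sum_\lambda\rho(\lambda)|\lambda-\lambda'|^{-(N+1)}\lesssim 1$ uses the lower bound on $\rho(z)$ from Lemma~\ref{Christ} and needs exactly $N>1/t$, while the column sum $\rho(\lambda)\sum_{\lambda'}\rho(\lambda')^N|\lambda-\lambda'|^{-(N+1)}\lesssim 1$ follows with no constraint on $t$. (The paper instead interpolates between the $\ell^1$ and $\ell^\infty$ endpoints, which is equivalent.) Minor slip: the vector your operator $A$ should act on is $e=(d_\lambda/\rho(\lambda))_\lambda$, not $(d_\lambda\rho(\lambda))_\lambda$; and the near/far split is superfluous since $\rho$-separation already prevents $\lambda\in D^{\delta}(\lambda')$ for small $\delta$.

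For part (i) there is a genuine gap. You propose the Schur test with weight $1$ for the $p=2$ endpoint, claiming the $(1-t)$-loss is ``harmless.'' It is not. The row sum for $N=2$ is
$$\rho(\lambda')^2\sum_{\lambda\in\Lambda_{\lambda'}}\frac{\rho(\lambda)}{|\lambda-\lambda'|^3}\simeq\rho(\lambda')^2\int_{\mathbb{C}\setminus D^{\delta}(\lambda')}\frac{dm(z)}{\rho(z)\,|z-\lambda'|^3},$$
where the $\rho(z)^{-1}$ arises unavoidably from the comparison $\sum_\lambda\leadsto\int dm/\rho^2$. To control $\rho(z)^{-1}$ you need a \emph{lower} bound on $\rho(z)$, and inverting Lemma~\ref{Christ} gives $\rho(z)^{-1}\lesssim\rho(\lambda')^{-1/t}|z-\lambda'|^{1/t-1}$; the resulting integral $\int|z-\lambda'|^{-(4-1/t)}dm$ converges only when $t>1/2$. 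Since part (i) must cover all $t\in(0,1)$, the unweighted Schur test fails. The fix, which is the paper's device, is to run Schur's test with the weight $h=\rho$: the two conditions become $\sum_\lambda K(\lambda',\lambda)\rho(\lambda)\lesssim\rho(\lambda')$ and $\sum_{\lambda'}K(\lambda',\lambda)\rho(\lambda')\lesssim\rho(\lambda)$. With $K(\lambda',\lambda)=\rho(\lambda)\rho(\lambda')^2|\lambda-\lambda'|^{-3}$ the first reduces to $\sum_\lambda\rho(\lambda)^2|\lambda-\lambda'|^{-3}\simeq\int|z-\lambda'|^{-3}dm\simeq\rho(\lambda')^{-1}$ (no $t$ at all), and the second to $\sum_{\lambda'}\rho(\lambda')^3|\lambda-\lambda'|^{-3}\simeq\int\rho(z)|z-\lambda|^{-3}dm\lesssim\rho(\lambda)^t\int|z-\lambda|^{-2-t}dm\simeq 1$, which uses only the \emph{upper} bound from Lemma~\ref{Christ} and hence works for every $t\in(0,1)$. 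The weight $\rho$ is not cosmetic: it is chosen precisely so that the $\rho^2$ factors cancel the $dm/\rho^2$ produced by the sum-to-integral comparison, leaving a nonnegative power of $\rho(z)$ for which the one-sided Christ estimate suffices.
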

\begin{proof}
(i) Define $\tilde{d}_\lambda=d_\lambda/\rho(\lambda)$ and
$$L_{\lambda'}(\tilde{d})=\sum_{\lambda\in\Lambda_{\lambda'}}\frac{\tilde{d}_{\lambda}\rho(\lambda)\rho(\lambda')^2}{|\lambda'-\lambda|^3}.$$
H\"{o}lder's inequality and Lemma~\ref{Christ} show that this sum converges, and it clearly gives rise to a linear operator on $\ell^{p}$. We will show
that it is in fact a bounded operator from $\ell^{p}$ to $\ell^{p}$, which will imply the claimed result. Note first that
$$\sum_{\lambda'\in\Lambda}|L_{\lambda'}|\leq\sum_{\lambda\in\Lambda}|\tilde{d}_\lambda|\rho(\lambda)\sum_{\lambda'\in\Lambda_{\lambda}}\frac{\rho(\lambda')^2}{|\lambda'-\lambda|^3}\lesssim\sum_{\lambda\in\Lambda}|\tilde{d}_\lambda|$$
so that $L$ is a bounded linear operator from $\ell^{1}$ to $\ell^{1}$. Here we have used the fact that
$$\sum_{\lambda'\in\Lambda_{\lambda}}\frac{\rho(\lambda')^2}{|\lambda'-\lambda|^3}\simeq\sum_{\lambda'\in\Lambda_{\lambda}}\int_{D^\delta(\lambda')}\frac{dm(z)}{|z-\lambda|^3}\leq\int_{\mathbb{C}\backslash D^\delta(\lambda)}\frac{dm(z)}{|z-\lambda|^3}\simeq\rho(\lambda)^{-1}$$
where $0<\delta<\delta_1$.

We now show that $L$ is a bounded operator from $\ell^{2}$ to $\ell^{2}$, using Schur's test (see eg. \cite{Wi}). We consider $L$ as an integral operator
with kernel $K(\lambda',\lambda)=\frac{\rho(\lambda)\rho(\lambda')^2}{|\lambda'-\lambda|^3}$ for $\lambda\neq\lambda'$ and $K(\lambda,\lambda)=0$. Now
\begin{equation}\label{sch1}
\sum_{\lambda\in\Lambda}K(\lambda',\lambda)\rho(\lambda)=\rho(\lambda')^2\sum_{\lambda\in\Lambda_{\lambda'}}\frac{\rho(\lambda)^2}{|\lambda'-\lambda|^3}\lesssim\rho(\lambda')
\end{equation}
and
$$\sum_{\lambda'\in\Lambda}K(\lambda',\lambda)\rho(\lambda')=\rho(\lambda)\sum_{\lambda'\in\Lambda_\lambda}\frac{\rho(\lambda')^3}{|\lambda'-\lambda|^3}\lesssim\rho(\lambda)\int_{\mathbb{C}\backslash D(\lambda')}\frac{\rho(z)}{|\lambda'-z|^3}dm(z).$$
Applying Lemma~\ref{Christ} we have
$$\int_{\mathbb{C}\backslash D(\lambda')}\frac{\rho(z)}{|\lambda'-z|^3}dm(z)\lesssim\rho(\lambda')^t\int_{\mathbb{C}\backslash D(\lambda')}\frac{dm(z)}{|\lambda'-z|^{2+t}}\simeq1$$
so that
\begin{equation}\label{sch2}
\sum_{\lambda'\in\Lambda}K(\lambda',\lambda)\rho(\lambda')\lesssim\rho(\lambda).
\end{equation}
Combining now \eqref{sch1} and \eqref{sch2} and applying the Schur test shows that $L$ is indeed bounded from $\ell^{2}$ to $\ell^{2}$. Applying now the
Riesz-Thorin interpolation theorem (see, eg. \cite[Chap. 6, \S 5, Thm (6.27)]{Fo}) completes the proof.

(ii) We use the same notation. Define
$$M_{\lambda'}(\tilde{d})=\sum_{\lambda\in\Lambda_{\lambda'}}\frac{\tilde{d}_{\lambda}\rho(\lambda)\rho(\lambda')^N}{|\lambda'-\lambda|^{N+1}}.$$
Since $\Lambda$ is $\rho$-separated and $N\geq2$ we have
$$\frac{\rho(\lambda')^N}{|\lambda'-\lambda|^{N+1}}\lesssim\frac{\rho(\lambda')^2}{|\lambda'-\lambda|^{3}}$$
so that (i) shows that $M$ defines a bounded linear operator from $\ell^{1}$ to $\ell^{1}$.

Also
$$\sup_{\lambda'\in\Lambda}\left|M_{\lambda'}\right|\leq\|\tilde{d}\|_{\ell^{\infty}}\sup_{\lambda'\in\Lambda}\rho(\lambda')^N\sum_{\lambda\in\Lambda_{\lambda'}}\frac{\rho(\lambda)}{|\lambda'-\lambda|^{N+1}}.$$
Applying again Lemma~\ref{Christ} we have
$$\sum_{\lambda\in\Lambda_{\lambda'}}\frac{\rho(\lambda)}{|\lambda'-\lambda|^{N+1}}\lesssim\int_{\mathbb{C}\backslash D(\lambda')}\frac{dm(z)}{|\lambda'-z|^{N+1}\rho(z)}\lesssim\rho(\lambda')^{-\frac{1}{t}}\int_{\mathbb{C}\backslash
D(\lambda')}\frac{dm(z)}{|\lambda'-z|^{2+N-\frac{1}{t}}}\simeq\rho(\lambda')^{-N}$$ since $N>1/t$. Consequently
$$\sup_{\lambda'\in\Lambda}\left|M_{\lambda'}\right|\lesssim\|\tilde{d}\|_{\ell^{\infty}}$$ so that $M$ defines a bounded linear operator from $\ell^{\infty}$ to
$\ell^{\infty}$. Once more the Riesz-Thorin interpolation theorem completes the proof.
\end{proof}

\section{Statement of our main results.} We are ready to state our results, in full generality. As before $\Lambda$ is a critical lattice, the zero
sequence of $g$ a multiplier associated to $\phi$. We begin with the simplest case, which is the Hilbert space $\Fdosa$, where we need only slightly
modify Theorem~\ref{ClassF}:
\begin{thm}\label{main2}
Let $\Lambda$ be a critical lattice associated to the multiplier $g$. There exists $f\in\Fdosa$ satisfying $f|\Lambda=c$ if and only if
\begin{itemize}
\item $\displaystyle{\sum_{\lambda\in\Lambda}|c_\lambda|^2 e^{-2\phi(\lambda)}<+\infty}$ and
\item $\displaystyle{\sum_{\lambda'\in\Lambda}\Bigl|\textup{p.v.}\sum_{\lambda\in\Lambda_{\lambda'}}\frac{c_\lambda}{g'(\lambda)(\lambda-\lambda')}\Bigr|^2 <+\infty}$.
\end{itemize}
\end{thm}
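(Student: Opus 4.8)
The plan is to prove both directions by passing between the sequence $c$ and an explicit candidate interpolant built from $g$. Following the classical Lagrange-type construction, for a sequence $c$ one forms
$$
f(z)=g(z)\sum_{\lambda\in\Lambda}\frac{c_\lambda}{g'(\lambda)(z-\lambda)},
$$
interpreted as a principal value. The key observation is that the summand at $\lambda'$ produces, near $z=\lambda'$, the term $c_{\lambda'}$ (since $g(z)/(g'(\lambda')(z-\lambda'))\to 1$), while the remaining terms contribute $g(\lambda')$ times the ``tail sum'' $\mathrm{p.v.}\sum_{\lambda\in\Lambda_{\lambda'}}c_\lambda/(g'(\lambda)(\lambda-\lambda'))$, which vanishes because $g(\lambda')=0$. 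Thus \emph{formally} $f|\Lambda=c$ regardless, and the whole content is showing that $f\in\Fdosa$ exactly when the two stated conditions hold. For the necessity direction one starts from an arbitrary $f\in\Fdosa$ with $f|\Lambda=c$: the first condition is immediate from \eqref{discPlaPol} (or \eqref{basicnec} together with $|g'(\lambda)|\simeq e^{\phi(\lambda)}/\rho(\lambda)$), and for the second one uses a representation formula from Section~4 expressing $f$ via its values on $\Lambda$, then reads off that the cancellation sum is controlled by $\|f\|_{\Fdosa}$.

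The core estimate I would isolate is a two-sided bound, for the candidate $f$ above,
$$
\|f\|_{\Fdosa}^2\simeq\sum_{\lambda\in\Lambda}|c_\lambda|^2 e^{-2\phi(\lambda)}+\sum_{\lambda'\in\Lambda}\Bigl|\mathrm{p.v.}\!\!\sum_{\lambda\in\Lambda_{\lambda'}}\frac{c_\lambda}{g'(\lambda)(\lambda-\lambda')}\Bigr|^2.
$$
To get at $\|f\|_{\Fdosa}^2=\iC|f|^2e^{-2\phi}\rho^{-2}\,dm$ one decomposes $\C=\bigcup_{\lambda'}Q_{\lambda'}$ and on each cell $Q_{\lambda'}$ splits the Lagrange sum into the diagonal term $c_{\lambda'}g(z)/(g'(\lambda')(z-\lambda'))$ and the off-diagonal remainder. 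Using $|g(z)|\simeq e^{\phi(z)}d_\phi(z,\Lambda)$ and, on $Q_{\lambda'}$, $d_\phi(z,\Lambda)=d_\phi(z,\lambda')\simeq|z-\lambda'|/\rho(\lambda')$ (Lemma~\ref{lem4}), the diagonal term integrates over $Q_{\lambda'}$ to something $\simeq |c_{\lambda'}|^2e^{-2\phi(\lambda')}$ (this uses $|g'(\lambda')|\simeq e^{\phi(\lambda')}/\rho(\lambda')$ and $|Q_{\lambda'}|/\rho(\lambda')^2\simeq 1$ from \eqref{intQ}). The off-diagonal remainder, divided by $g(z)$, is a holomorphic function on a neighbourhood of $Q_{\lambda'}$ whose value at $\lambda'$ is precisely the p.v. cancellation sum; the Plancherel--Pólya inequality (Lemma~\ref{PlaPol}) then bounds its $L^2$ average over $Q_{\lambda'}$ by the square of that value, and conversely a mean-value/submean argument recovers the pointwise value from the average. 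Summing over $\lambda'$ and checking that the cross terms are harmless (orthogonality-type cancellation, or absorbing them via Cauchy--Schwarz and the convergence of $\sum 1/|\lambda-\lambda'|^\alpha$ established in the preliminaries) yields the displayed equivalence.

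The main obstacle I anticipate is the treatment of the principal value and the control of the off-diagonal remainder. The series $\sum_\lambda c_\lambda/(g'(\lambda)(z-\lambda))$ is not absolutely convergent under only $\sum|c_\lambda|^2e^{-2\phi(\lambda)}<\infty$, so one must justify that the p.v.\ limit exists and defines a holomorphic function with the right local behaviour; this is exactly where the ``second condition'' enters as a hypothesis on the sufficiency side, and where a discrete Cauchy/Beurling--Ahlfors transform bound (the content of Section~5, applied with $N=1$ in spirit) is needed to estimate $\sup$ or $\ell^2$ norms of the tail sums as $\lambda'$ varies. Concretely, one must show that the map sending $(c_\lambda g'(\lambda)^{-1}\rho(\lambda)^{-1})$ — an $\ell^2$ sequence by the size condition — to the sequence of tail sums differs from a bounded operator by a term already controlled, so that the off-diagonal contribution to $\|f\|_{\Fdosa}^2$ is $\lesssim$ the two right-hand side sums; pairing this with the reverse inequality (from necessity, via the Section~4 representation formula) closes the argument. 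A secondary technical point is justifying the interchange of summation and integration and the convergence of cross terms uniformly in $\lambda'$, which I would handle with Lemma~\ref{usef} and Lemma~\ref{Christ} as in the estimates already displayed in Section~2.
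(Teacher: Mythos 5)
Your overall strategy matches the paper: use the Lagrange-type candidate $f(z)=g(z)\,\text{p.v.}\sum_\lambda d_\lambda/(z-\lambda)$, decompose $\C=\bigcup_{\lambda'}Q_{\lambda'}$, split each cell into diagonal and off-diagonal contributions, and read the necessity off the representation formula of Section~4. However, the crucial estimate for the off-diagonal part is handled incorrectly, and the mechanism that makes the ``second-order'' condition disappear for $p=2$ is not identified.

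The central gap is the sentence claiming that Plancherel--P\'olya bounds the $L^2$ average of the off-diagonal piece over $Q_{\lambda'}$ by the square of its value at $\lambda'$. Lemma~\ref{PlaPol} goes in the \emph{opposite} direction: it bounds the pointwise value of a holomorphic function by its local $L^p$ average. Since $|h|^2$ is subharmonic, the average over a disk centered at $\lambda'$ dominates $|h(\lambda')|^2$, so no mean-value argument of the kind you invoke will bound $\int_{Q_{\lambda'}}|h|^2\,dm/\rho^2$ by $|h(\lambda')|^2$ for general holomorphic $h$. (The off-diagonal sum also has poles at neighbouring $\lambda$'s, so it is holomorphic on $Q_{\lambda'}$ but not on a larger neighbourhood, making such a one-sided comparison even less automatic.) What the paper uses instead is the algebraic identity \eqref{tayl}, rewritten as
$$\frac{1}{z-\lambda}=\frac{(z-\lambda')^N}{(z-\lambda)(\lambda-\lambda')^N}-\frac{1}{\lambda-\lambda'}-\frac{z-\lambda'}{(\lambda-\lambda')^2}-\cdots-\frac{(z-\lambda')^{N-1}}{(\lambda-\lambda')^N},$$
which on $Q_{\lambda'}$ (where $|z-\lambda'|\lesssim\rho(\lambda')$ and $|z-\lambda|\simeq|\lambda-\lambda'|$) separates the off-diagonal sum into a remainder controlled by Lemma~\ref{usef} and the finitely many $z$-independent sums $\rho(\lambda')^{n-1}\sum_{\lambda\neq\lambda'}d_\lambda/(\lambda-\lambda')^n$, $n=1,\dots,N$. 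This is the step that actually bounds the cell integral; the vague ``orthogonality/cross terms are harmless'' discussion in your proposal does not replace it.

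Two further points. First, the reason Theorem~\ref{main2} only has two conditions (no analogue of (c)) is specific and worth naming: with $N=2$ the proof produces the term $\sum_{\lambda'}|\rho(\lambda')\sum_{\lambda\neq\lambda'}d_\lambda/(\lambda-\lambda')^2|^2$, i.e.\ $\|B(d)\|_{\ell^2(\rho)}^2$ for the discrete Beurling--Ahlfors transform of \eqref{defn}; since $\rho^{p-2}\equiv 1$ is trivially an $A_2$ weight, Theorem~\ref{discBA} gives $\|B(d)\|_{\ell^2(\rho)}\lesssim\|d\|_{\ell^2(\rho^{-1})}$, so this term is automatically finite from condition (a). Your phrase ``applied with $N=1$ in spirit'' and ``differs from a bounded operator by a term already controlled'' misses this structure. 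Second, for the necessity of the p.v.\ condition, the paper samples $f/g$ at the $\rho$-separated offsets $z_{\lambda'}^k=\lambda'+\delta\omega_k\rho(\lambda')$ with $\omega_k$ the $N$-th roots of unity and averages in $k$ to isolate the individual sums $\sum_\lambda d_\lambda/(\lambda-\lambda')^n$; your proposal leaves the necessity essentially as ``read off from the representation formula,'' which does not by itself yield $\ell^2$ control of the cancellation sum.
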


Our result in $\calf_\phi^1$ is also only a slight modification of Theorem~\ref{ClassF}.
\begin{thm}\label{main1}
Let $\Lambda$ be a critical lattice associated to the multiplier $g$. There exists $f\in\calf_\phi^1$ satisfying $f|\Lambda=c$ if and only if
\begin{itemize}
\item $\displaystyle{\sum_{\lambda\in\Lambda}|c_\lambda| e^{-\phi(\lambda)}<+\infty}$ and
\item $\displaystyle{\sum_{\lambda'\in\Lambda}\Bigl|\sum_{\lambda\in\Lambda_{\lambda'}}\frac{c_\lambda}{g'(\lambda)(\lambda-\lambda')}\Bigr|<+\infty}$ and
\item
$\displaystyle{\sum_{\lambda'\in\Lambda}\Bigl|\rho(\lambda')\sum_{\lambda\in\Lambda_{\lambda'}}\frac{c_\lambda}{g'(\lambda)(\lambda-\lambda')^2}\Bigr|<+\infty}$
\end{itemize}
\end{thm}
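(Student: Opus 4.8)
The plan is to prove Theorem~\ref{main1} by establishing necessity and sufficiency separately, following the representation-formula scheme that underlies Theorem~\ref{ClassF}. For \emph{necessity}, suppose $f\in\calf_\phi^1$ with $f|\Lambda=c$. The first condition is immediate from \eqref{basicnec} together with $|g'(\lambda)|\simeq e^{\phi(\lambda)}/\rho(\lambda)$, which gives $\sum_\lambda |c_\lambda|e^{-\phi(\lambda)}\simeq\sum_\lambda|c_\lambda/(g'(\lambda)\rho(\lambda))|<\infty$. For the second and third conditions I would use the interpolation/representation formula proved in Section~4 (referenced in the introduction): writing $f(z)=\sum_{\lambda\in\Lambda}\frac{c_\lambda}{g'(\lambda)}\cdot\frac{g(z)}{z-\lambda}$ in an appropriate regularised sense, one compares $f$ with the discrete Cauchy-type sums. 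Concretely, the quantity $\sum_{\lambda\in\Lambda_{\lambda'}}\frac{c_\lambda}{g'(\lambda)(\lambda-\lambda')}$ should equal (up to the contribution of $f$ near $\lambda'$, which is controlled by the first condition and Lemma~\ref{PlaPol}) a quantity like $f'(\lambda')/g'(\lambda')$ or $\frac{1}{g'(\lambda')}\cdot\frac{d}{dz}\bigl(\frac{f(z)g(z)}{\cdots}\bigr)$, whose $\ell^1(\Lambda)$-norm is then dominated by $\|f\|_{\calf_\phi^1}$ via the Plancherel--Polya inequality \eqref{discPlaPol} applied to $f$ and to derivatives of $f$ (using Cauchy estimates on discs $D^r(\lambda')$). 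The third condition, with the extra factor $\rho(\lambda')$, corresponds to the discrete Beurling--Ahlfors transform studied in Section~5 and should similarly be matched against $\rho(\lambda')^2 f''(\lambda')/g'(\lambda')$ or an analogous second-order expression, again bounded by $\|f\|_{\calf_\phi^1}$.

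For \emph{sufficiency}, assume $c$ satisfies the three conditions; I want to construct $f\in\calf_\phi^1$ with $f|\Lambda=c$. The natural candidate is
\[
f(z)=g(z)\,\textup{p.v.}\!\!\sum_{\lambda\in\Lambda}\frac{c_\lambda}{g'(\lambda)(z-\lambda)},
\]
interpreted so that the principal-value sum converges and defines an entire function (the apparent poles at $z=\lambda$ cancel with the zeros of $g$, and $f(\lambda)=c_\lambda$ by the residue computation, using $g(\lambda)=0$, $g'(\lambda)\neq0$). The convergence of this series as an entire function needs the absolute summability of $\sum_{\lambda\in\Lambda_0}\lambda^{-\alpha}$ established at the end of Subsection~2.3 together with condition~(i) on $c$; this is where the normalisation $0\in\Lambda$ is used. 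The crux is the norm estimate $\|f\|_{\calf_\phi^1}\lesssim (\text{sum of the three quantities})$. I would split $\C=\bigcup_\lambda Q_\lambda$ and estimate $\int_{Q_{\lambda'}}|f(z)|e^{-\phi(z)}\frac{dm(z)}{\rho(z)^2}$. On $Q_{\lambda'}$ write $\frac{1}{z-\lambda}=\frac{1}{\lambda'-\lambda}+\frac{z-\lambda'}{(\lambda'-\lambda)(z-\lambda)}$ and then iterate once more to peel off a term with $(\lambda'-\lambda)^{-2}$; this Taylor-expansion of the Cauchy kernel around the centre $\lambda'$ produces exactly the three pieces: the $\lambda=\lambda'$ term $g(z)c_{\lambda'}/(g'(\lambda')(z-\lambda'))$ whose integral over $Q_{\lambda'}$ is $\simeq|c_{\lambda'}|e^{-\phi(\lambda')}$ (using $|g(z)|\simeq e^{\phi(z)}d_\phi(z,\Lambda)\simeq e^{\phi(z)}|z-\lambda'|/\rho(\lambda')$ on $Q_{\lambda'}$ and \eqref{intQ}); the first-order term giving $\bigl|\sum_{\lambda\in\Lambda_{\lambda'}}\frac{c_\lambda}{g'(\lambda)(\lambda-\lambda')}\bigr|$; the second-order term giving the factor $|z-\lambda'|^2\lesssim\rho(\lambda')|z-\lambda'|$ on $Q_{\lambda'}$, hence $\rho(\lambda')\bigl|\sum_{\lambda\in\Lambda_{\lambda'}}\frac{c_\lambda}{g'(\lambda)(\lambda-\lambda')^2}\bigr|$; and a tail remainder $\sum_{\lambda\in\Lambda_{\lambda'}}\frac{|c_\lambda|\rho(\lambda')^2}{|\lambda'-\lambda|^3}$ that must be controlled independently. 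This tail is precisely handled by Lemma~\ref{usef}(i) with $p=1$ applied to $d_\lambda=c_\lambda/g'(\lambda)\in\ell^1(\rho^{-1})$ (equivalently $c_\lambda e^{-\phi(\lambda)}\in\ell^1$), which shows $\sum_{\lambda\in\Lambda_{\lambda'}}\frac{|c_\lambda/g'(\lambda)|}{|\lambda'-\lambda|^3}\in\ell^1(\rho^2)$, i.e.\ $\sum_{\lambda'}\rho(\lambda')^2\sum_{\lambda}\frac{|c_\lambda/g'(\lambda)|}{|\lambda'-\lambda|^3}<\infty$, so its contribution to $\|f\|_{\calf_\phi^1}$ is finite.

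One technical point deserving care is the meaning of the (not principal-value) sum $\sum_{\lambda\in\Lambda_{\lambda'}}\frac{c_\lambda}{g'(\lambda)(\lambda-\lambda')}$ appearing in condition~(ii): unlike in the $p=2$ and $p=\infty$ cases the hypothesis is that this converges absolutely after summing the outer $\ell^1$-norm, but the inner sum itself need not converge absolutely for each fixed $\lambda'$. I would address this by working throughout with the symmetric (principal-value) truncations $|\lambda|<R$ and showing the estimates are uniform in $R$, then passing to the limit; the difference between the $|\lambda|<R$ truncation centred at $0$ and the natural truncation is absorbed into the tail term bounded by Lemma~\ref{usef}(i) and into condition~(i). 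The main obstacle, as in Levin's original argument, is getting the bookkeeping of these three interlocking pieces exactly right — in particular verifying that the first-order and second-order discrete sums that emerge from the Taylor expansion on $Q_{\lambda'}$ are genuinely the ones in the statement (same base point $\lambda'$, correct powers of $\rho(\lambda')$) and that no divergent cross-terms are hidden in the passage between absolute and principal-value summation. Once that matching is pinned down, both directions reduce to Lemma~\ref{usef}(i), the size estimate $|g(z)|\simeq e^{\phi(z)}d_\phi(z,\Lambda)$, and \eqref{intQ}.
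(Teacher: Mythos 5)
Your sufficiency argument is essentially the paper's: split $\C=\bigcup_{\lambda'}Q_{\lambda'}$, estimate $|g(z)|\simeq e^{\phi(z)}|z-\lambda'|/\rho(\lambda')$ on $Q_{\lambda'}$ via \eqref{intQ}, Taylor-expand the Cauchy kernel around $\lambda'$ to order $N=2$, read off conditions (ii) and (iii), and absorb the remainder $\rho(\lambda')^2\sum_{\lambda\ne\lambda'}|d_\lambda|/|\lambda-\lambda'|^3$ via Lemma~\ref{usef}(i). One small correction: your partial-fraction step should read
\[
\frac{1}{z-\lambda}=\frac{1}{\lambda'-\lambda}+\frac{\lambda'-z}{(\lambda'-\lambda)(z-\lambda)},
\]
(you wrote $z-\lambda'$ in the numerator of the second fraction); iterating this gives exactly the paper's modified identity \eqref{tayl}. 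Also, the ``technical point'' you flag at the end is not actually a problem when $p=1$: condition (i) gives $d_\lambda=c_\lambda/g'(\lambda)\in\ell^1(\rho^{-1})$, and since $\rho(\lambda)\lesssim|\lambda|^\beta$ with $\beta<1$ (estimate~\eqref{eqn5}) one has $|\lambda-\lambda'|^{-1}\lesssim\rho(\lambda)^{-1}$ for $|\lambda|$ large, so each inner sum in (ii) converges absolutely. This is precisely why the statement for $\calf_\phi^1$ has no principal values, as the paper remarks after the Corollary to Lemma~\ref{lemrepinf}.

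For necessity your plan diverges more from the paper and is where the real gap lies. You propose to identify $\sum_{\lambda\ne\lambda'}\frac{d_\lambda}{\lambda-\lambda'}$ with (roughly) $f'(\lambda')/g'(\lambda')$ and the second sum with something like $\rho(\lambda')^2 f''(\lambda')/g'(\lambda')$ and then control derivatives of $f$ on $\Lambda$ through Plancherel--Polya and Cauchy estimates. The idea is workable in principle (the regular part $H$ of $f/g$ at $\lambda'$ satisfies $H(\lambda')=\frac{f'(\lambda')}{g'(\lambda')}-\frac{f(\lambda')g''(\lambda')}{2g'(\lambda')^2}$, etc.), but you do not carry out the computation of these Laurent coefficients, nor the estimates showing $\sum_{\lambda'}\rho(\lambda')^k|f^{(k)}(\lambda')|e^{-\phi(\lambda')}<\infty$, nor the bounds for the $g$-derivative weights, so as written this direction is a sketch rather than a proof. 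The paper does something cleaner: it starts from the representation formula \eqref{repfinite}, evaluates $f/g$ at the auxiliary points $z_{\lambda'}^k=\lambda'+\delta\omega_k\rho(\lambda')$ (with $\omega_k$ the $N$-th roots of unity), applies the shifted Taylor identity to express the discrete Cauchy sums in terms of $f(z_{\lambda'}^k)/g(z_{\lambda'}^k)$, $d_{\lambda'}/\rho(\lambda')$, and a remainder controlled by Lemma~\ref{usef}, and then uses \eqref{rhosepseries} to sum the auxiliary values; finally it takes linear combinations over $k$ to isolate (ii) and (iii) separately. This completely sidesteps differentiation of $f$ and keeps all bookkeeping inside the same Lemma~\ref{usef}/\eqref{rhosepseries} framework that also drives sufficiency, which is why the paper can give a single argument covering Theorems~\ref{main2}, \ref{main1}, \ref{mainbetw1n2} and \ref{mainbig2}. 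If you want to pursue your derivative-based route you would need to supply those Cauchy-estimate lemmas; alternatively, adopt the auxiliary-point trick, which is the cleaner path.
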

In general, the situation is slightly more complicated. We begin with the case $1<p<2$. Here there are two possibilities, depending on whether or not
$\rho^{p-2}$ is a Muckenhoupt $A_p$ weight (see Section~\ref{BeuAhlsecn} for the definition). If this additional assumption holds then our result is
essentially the same as in the classical case, otherwise we add an additional condition to our result. We also show in Section~\ref{BeuAhlsecn} that both
of these possibilities can occur.
\begin{thm}\label{mainbetw1n2}
Let $\Lambda$ be a critical lattice associated to the multiplier $g$ and suppose $1<p<2$.
\begin{itemize}
\item
If $\rho_\phi^{p-2}$ is an $A_p$ weight then there exists $f\in\Fpa$ satisfying $f|\Lambda=c$ if and only if
\begin{itemize}
\item[(a)] $\displaystyle{\sum_{\lambda\in\Lambda}|c_\lambda|^p e^{-p\phi(\lambda)}<+\infty}$ and
\item[(b)] $\displaystyle{\sum_{\lambda'\in\Lambda}\Bigl|\textup{p.v.}\sum_{\lambda\in\Lambda_{\lambda'}}\frac{c_\lambda}{g'(\lambda)(\lambda-\lambda')}\Bigr|^p<+\infty}$.
\end{itemize}
\item
If $\rho_\phi^{p-2}$ is not an $A_p$ weight then there exists $f\in\Fpa$ satisfying $f|\Lambda=c$ if and only if (a) and (b) hold, and in addition
\begin{itemize}
\item[(c)] $\displaystyle{\sum_{\lambda'\in\Lambda}\Bigl|\rho(\lambda')\textup{p.v.}\sum_{\lambda\in\Lambda_{\lambda'}}\frac{c_\lambda}{g'(\lambda)(\lambda-\lambda')^2}\Bigr|^p<+\infty}$.
\end{itemize}
\end{itemize}
\end{thm}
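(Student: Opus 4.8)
The plan is to follow the same architecture as the Hilbert-space case (Theorem~\ref{main2}), splitting the argument into necessity and sufficiency, with the $A_p$ dichotomy entering only through the boundedness properties of the discrete Beurling--Ahlfors transform studied in Section~\ref{BeuAhlsecn}. For \emph{necessity}, suppose $f\in\Fpa$ with $f|\Lambda=c$. Condition (a) is immediate from \eqref{discPlaPol}. For (b), one uses the representation formula from Section~4: writing $f(z)/g(z)$ as a sum of residues, the partial sums $\textup{p.v.}\sum_{\lambda\in\Lambda_{\lambda'}}\frac{c_\lambda}{g'(\lambda)(\lambda-\lambda')}$ are, up to the normalization $|g'(\lambda)|\simeq e^{\phi(\lambda)}/\rho(\lambda)$, essentially the values of an auxiliary entire function (the ``Cauchy part'' of $f$) on $\Lambda$, and \eqref{discPlaPol} again gives the $\ell^p$ bound; the principal value is what makes the series converge. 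When $\rho^{p-2}$ is \emph{not} $A_p$, one must additionally extract (c); here the point is that the ``derivative part'' $\textup{p.v.}\sum\frac{c_\lambda}{g'(\lambda)(\lambda-\lambda')^2}$ records the Beurling--Ahlfors transform of the Cauchy data, and since that transform fails to be bounded on the relevant weighted $\ell^p$ space one cannot recover (c) from (a)+(b)—it is genuinely extra information, and it is obtained from the representation formula by differentiating, with the weight $\rho(\lambda')$ accounting for the $\rho(\lambda')^{-1}$ in $|g'(\lambda')|$ versus the squared factor.

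For \emph{sufficiency}, given $c$ satisfying the stated conditions, I would build $f$ by the standard ansatz
$$f(z)=g(z)\Bigl[\frac{c_0}{g'(0)z}+\textup{p.v.}\sum_{\lambda\in\Lambda_0}\frac{c_\lambda}{g'(\lambda)}\Bigl(\frac{1}{z-\lambda}-\frac1\lambda\Bigr)\Bigr],$$
suitably adjusted for the basepoint $0\in\Lambda$, so that $f|\Lambda=c$ is forced by the simple zeros of $g$. The work is to show $f\in\Fpa$, i.e.\ to estimate $\iC|f(z)|^pe^{-p\phi(z)}\rho(z)^{-2}dm(z)$. Using $|g(z)|\simeq e^{\phi(z)}d_\phi(z,\Lambda)$ from Theorem~\ref{multiplier}, this reduces to a discrete inequality: on each cell $Q_{\lambda'}$ one separates the ``diagonal'' term (the $\lambda'$-singularity of $g$, which is killed by the zero of $g$ and contributes $|c_{\lambda'}|^pe^{-p\phi(\lambda')}$, controlled by (a)) from the ``off-diagonal'' tail. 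For the tail one applies a second-order Taylor/telescoping expansion of $\frac{1}{z-\lambda}$ around $z=\lambda'$: the zeroth-order term gives exactly the sum in (b) evaluated at $\lambda'$, the first-order term gives $\rho(\lambda')$ times the sum in (c), and the quadratic remainder is handled by Lemma~\ref{usef}(i) together with (a) (since $1<p<2\le 2$, part (i) applies), exactly as in the Hilbert case. In the $A_p$ case, condition (c) is instead \emph{derived} from (a)+(b) via the boundedness of the discrete Beurling--Ahlfors transform on $\ell^p(\rho^{p-2})$—this is the sole place the weight hypothesis is used—and then the same estimate goes through with (c) available for free.

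The main obstacle, and the genuinely new ingredient compared to Theorem~\ref{main2}, is the weighted-norm analysis of the discrete Beurling--Ahlfors transform in Section~\ref{BeuAhlsecn}: one must prove that $c\mapsto\bigl(\rho(\lambda')\,\textup{p.v.}\sum_{\lambda\in\Lambda_{\lambda'}}\frac{c_\lambda}{g'(\lambda)(\lambda-\lambda')^2}\bigr)_{\lambda'}$ is bounded on $\ell^p$ of the data precisely when $\rho^{p-2}\in A_p$, and exhibit $\phi$ for which this fails. This is where the Muckenhoupt condition is natural—the continuous Beurling--Ahlfors transform is $L^p(w)$-bounded iff $w\in A_p$, and the factor $\rho^{p-2}$ is the Jacobian-type weight relating the counting measure on $\Lambda$ (with the $\rho^{-1}$ normalization built into $g'$) to Lebesgue measure with the $\rho^{-2}\,dm$ density. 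Establishing the discrete analogue rigorously, controlling the passage between the lattice sum and the singular integral, and verifying sharpness of the $A_p$ threshold, is the technical heart; once it is in hand, the rest of the proof is a bookkeeping exercise combining Theorem~\ref{multiplier}, Lemma~\ref{usef}, Lemma~\ref{PlaPol}, and the cell decomposition $\mathbb{C}=\bigcup_{\lambda}Q_\lambda$ with \eqref{intQ}.
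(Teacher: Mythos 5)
Your sufficiency outline matches the paper: build $f=g\cdot\text{p.v.}\sum_{\lambda}\frac{d_\lambda}{z-\lambda}$ (with $d_\lambda=c_\lambda/g'(\lambda)$), decompose $\iC|f|^pe^{-p\phi}\rho^{-2}dm$ over the cells $Q_{\lambda'}$ using $|g(z)|\simeq e^{\phi(z)}d_\phi(z,\Lambda)$, split off the diagonal, Taylor-expand $\frac{1}{z-\lambda}$ around $\lambda'$ to order $N=2$ so that the $n=1,2$ terms are controlled by (b) and (c) and the $N$-th remainder by Lemma~\ref{usef}(i). Your account of how the $A_p$ dichotomy enters is also essentially right, with one small inaccuracy: in the $A_p$ case, condition (c) is recovered from \emph{(a) alone} (not from ``(a)$+$(b)''), since $d\in\ell^p(\rho^{-1})$ together with Theorem~\ref{discBA} gives $\rho(\lambda')B_{\lambda'}(d)\in\ell^p$, which is precisely (c). (There is also a sign slip in your ansatz: the first-order correction should be $\frac{1}{z-\lambda}+\frac{1}{\lambda}$, subtracting the Taylor polynomial of $\frac{1}{z-\lambda}$ at $0$, not $\frac{1}{z-\lambda}-\frac{1}{\lambda}$.)

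The genuine gap is in your necessity argument. You assert that the principal-value sums in (b) are ``essentially the values of an auxiliary entire function (the `Cauchy part' of $f$) on $\Lambda$,'' so that \eqref{discPlaPol} applies directly. This does not work: $\text{p.v.}\sum_{\lambda\in\Lambda_{\lambda'}}\frac{d_\lambda}{\lambda-\lambda'}$ is the regularized value of the meromorphic function $f/g=\text{p.v.}\sum_\lambda\frac{d_\lambda}{z-\lambda}$ \emph{at its own pole} $\lambda'$, with the singular part $\frac{d_{\lambda'}}{z-\lambda'}$ removed, and there is no entire function whose lattice values these are. Likewise your claim that (c) ``is obtained from the representation formula by differentiating'' is not what happens, and as stated it has no content. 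The key idea you are missing is the paper's off-lattice evaluation combined with a roots-of-unity (Vandermonde) averaging: one sets $z_{\lambda'}^k=\lambda'+\delta\omega_k\rho(\lambda')$ with $\omega_k=e^{2\pi ik/N}$, uses \eqref{rhosepseries} to get $\bigl(f(z_{\lambda'}^k)/g(z_{\lambda'}^k)\bigr)_{\lambda'}\in\ell^p$, expands $\frac{1}{z_{\lambda'}^k-\lambda}$ via the Taylor identity \eqref{tayl}, and controls the $N$-th order remainder by Lemma~\ref{usef}. This shows that, for each $k$, the \emph{combination} $\text{p.v.}\sum_{\lambda\in\Lambda_{\lambda'}}d_\lambda\bigl(\frac{1}{\lambda-\lambda'}+\frac{\delta\omega_k\rho(\lambda')}{(\lambda-\lambda')^2}+\cdots\bigr)$ lies in $\ell^p$; only by taking appropriate linear combinations over $k=0,\dots,N-1$ (exploiting $\frac{1}{N}\sum_k\omega_k^m=\delta_{m\equiv0\ (N)}$) does one separate the individual orders $n=1,\dots,N$ and obtain (b) and (c) simultaneously. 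This step is the substance of the necessity proof and needs to be supplied; without it one does not get either condition, let alone both. Note also that (c) is proved necessary \emph{unconditionally} by this argument, so the theorem does not in fact assert — nor does the paper prove — that (c) is ``genuinely extra information'' when $\rho^{p-2}\notin A_p$.
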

If $2<p<\infty$ then our result becomes more complicated, depending on the doubling constant.
\begin{thm}\label{mainbig2}
Let $\Lambda$ be a critical lattice associated to the multiplier $g$ and suppose $2<p<\infty$. Let $t$ be the constant occurring in Lemma~\ref{Christ}
(which depends on the doubling constant).
\begin{rlist}
\item
If $t>1/2$ and $\rho_\phi^{p-2}$ is an $A_p$ weight then there exists $f\in\Fpa$ satisfying $f|\Lambda=c$ if and only if (a) and (b) hold.
\item
If $t>1/2$ and $\rho_\phi^{p-2}$ is not an $A_p$ weight then there exists $f\in\Fpa$ satisfying $f|\Lambda=c$ if and only if (a), (b) and (c) hold.
\item If $t\leq1/2$ then there exists $f\in\Fpa$ satisfying $f|\Lambda=c$ if and only if (a) holds and
\begin{itemize}
\item[(b$'$)] There exists an integer $N>\frac{1}{t}$ such that, for every $1\leq n\leq N$,
$$\sum_{\lambda'\in\Lambda}\Bigl|\rho(\lambda')^{n-1}\textup{p.v.}\sum_{\lambda\in\Lambda_{\lambda'}}\frac{c_\lambda}{g'(\lambda)(\lambda-\lambda')^n}\Bigr|^p<+\infty.$$
\end{itemize}
\end{rlist}
\end{thm}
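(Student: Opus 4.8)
The plan is to prove necessity and sufficiency separately, the common machinery being the two representation formulas for functions in $\Fpa$ obtained in Section~4 together with the weighted $\ell^p$ mapping properties of the discrete Cauchy and Beurling--Ahlfors transforms established in Section~\ref{BeuAhlsecn}. Write $a_\lambda=c_\lambda/g'(\lambda)$; since $|g'(\lambda)|\simeq e^{\phi(\lambda)}/\rho(\lambda)$ by Theorem~\ref{multiplier} and Lemma~\ref{lem4}, condition (a) is exactly the statement $a\in\ell^p(\rho^{-1})$. The reason the threshold $t=1/2$ enters is the following: by Lemma~\ref{usef}(ii), as soon as $a\in\ell^p(\rho^{-1})$ the sums $\sum_{\lambda\in\Lambda_{\lambda'}}|a_\lambda|\,|\lambda-\lambda'|^{-n}$ lie in $\ell^p(\rho^{n-1})$ for every integer $n>1+1/t$, so only the orders $n\le 1+1/t$ can carry a genuine cancelation hypothesis; for $t>1/2$ this leaves only $n=1,2$, and for $t\le1/2$ it leaves $n=1,\dots$ up to about $1/t$ --- precisely the range of $N$ appearing in (b$'$).

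\textbf{Necessity.} Let $f\in\Fpa$ with $f|\Lambda=c$. Condition (a) is the discrete Plancherel--Polya inequality \eqref{discPlaPol}. For the rest, $F:=f/g$ is meromorphic with a simple pole of principal part $a_\lambda/(z-\lambda)$ at each $\lambda\in\Lambda$, and $|F(z)|\simeq|f(z)|e^{-\phi(z)}/d_\phi(z,\Lambda)$ decays off $\Lambda$. The representation formula of Section~4 expresses, for each $\lambda'\in\Lambda$ and each $n\ge1$, the principal value $\textup{p.v.}\sum_{\lambda\in\Lambda_{\lambda'}}a_\lambda/(\lambda-\lambda')^n$ through a bounded linear relation --- after rescaling by the appropriate powers of $\rho(\lambda')$ --- with the first $n$ Taylor coefficients at $\lambda'$ of the \emph{holomorphic} function $z\mapsto f(z)-\frac{c_{\lambda'}}{g'(\lambda')}\,\frac{g(z)}{z-\lambda'}$, together with finitely many values of $f$ and its derivatives at the base point $0$. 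On the cell $Q_{\lambda'}$ that function is controlled by $\sup_{Q_{\lambda'}}|f|e^{-\phi}$ plus $|c_{\lambda'}|e^{-\phi(\lambda')}$ (using $|g(z)|e^{-\phi(z)}\simeq d_\phi(z,\lambda')$), so Cauchy estimates on $D^{\delta_1}(\lambda')\subseteq Q_{\lambda'}$ combined with Lemma~\ref{PlaPol} and \eqref{rhosepseries}, \eqref{basicnec} bound the $\ell^p$ norm of the resulting normalized coefficients by $\|f\|_{\Fpa}$. This yields (b) (equivalently the $n=1$ case of (b$'$)) in all cases, (c) in case (ii) and the remaining conditions of (b$'$) in case (iii); in case (i) the hypothesis $\rho_\phi^{p-2}\in A_p$ makes the discrete Beurling--Ahlfors transform bounded, so the order-$2$ data is already dominated by (a) and (b) and no separate condition is needed.

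\textbf{Sufficiency.} Conversely, assume the stated conditions and fix $N$ equal to $2$ in cases (i)--(ii), and equal to the integer furnished by (b$'$) in case (iii) (which we may take $>1/t$). Form the regularized Mittag--Leffler series
$$F(z)=\frac{a_0}{z}+\sum_{\lambda\in\Lambda_0}a_\lambda\Bigl(\frac{1}{z-\lambda}+\sum_{k=0}^{N-1}\frac{z^k}{\lambda^{k+1}}\Bigr)=\frac{a_0}{z}+z^N\sum_{\lambda\in\Lambda_0}\frac{a_\lambda}{\lambda^N(z-\lambda)},$$
which converges locally uniformly off $\Lambda$ by (a), \eqref{eqn5} and Lemma~\ref{usef}, and set $f:=gF$. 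Then $f$ is entire, because the zeros of $g$ cancel the poles of $F$, and $f(\lambda)=g'(\lambda)a_\lambda=c_\lambda$ since $F$ has residue $a_\lambda$ at $\lambda$ and $g$ a simple zero there. To check $f\in\Fpa$ we estimate $\|f\|_{\Fpa}^p$ cell by cell over the $Q_{\lambda'}$, using $|g(z)|e^{-\phi(z)}\simeq d_\phi(z,\lambda')$ and \eqref{intQ}. On $Q_{\lambda'}$ split $F$ into its principal part $a_{\lambda'}/(z-\lambda')$, contributing $\simeq|a_{\lambda'}|/\rho(\lambda')\simeq|c_{\lambda'}|e^{-\phi(\lambda')}$, summable by (a), and its regular part, which on $Q_{\lambda'}\subseteq D^{R_1}(\lambda')$ is controlled by its Taylor polynomial of degree $N-1$ at $\lambda'$ plus an absolutely convergent tail handled by Lemma~\ref{usef}(ii). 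The $N$ leading coefficients, after the natural rescaling by powers of $\rho(\lambda')$, are the quantities of (b), (c), (b$'$) up to an invertible change of basis bounded on the relevant weighted $\ell^p$ spaces by the estimates of Section~\ref{BeuAhlsecn}. In cases (i)--(ii) only orders $1$ and $2$ occur: order $1$ is (b); order $2$ is (c) in case (ii), while in case (i) it is recovered from (a) and (b) via the $A_p$-boundedness of the discrete Beurling--Ahlfors transform. Summing over $\lambda'\in\Lambda$ gives $f\in\Fpa$.

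\textbf{Where the difficulty lies.} The crux is the $\Fpa$-membership of the constructed $f$, i.e.\ the uniform-in-$\lambda'$ control of the regular parts of $F$: this is precisely the boundedness on weighted $\ell^p$ spaces of the discrete singular integral operators with kernels $\rho(\lambda')^{n-1}/(\lambda-\lambda')^{n}$, and here the hypotheses are sharp. The Schur-test and Calder\'on--Zygmund arguments of Section~\ref{BeuAhlsecn} give boundedness of the order-$1$ (Cauchy) operator unconditionally and of the order-$2$ (Beurling--Ahlfors) operator exactly when $\rho_\phi^{p-2}\in A_p$; when $t\le1/2$ the metric control furnished by Lemma~\ref{Christ} is too weak to handle any of the operators of orders $2,3,\dots$ up to about $1/t$, so none of the conditions in (b$'$) can be dropped and the Mittag--Leffler series must be pushed to order $N>1/t$ so that Lemma~\ref{usef}(ii) makes the remainder absolutely summable. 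One must also check that the principal values in (b), (c), (b$'$) are well defined, that $F$ (hence $f$) is independent of the chosen regularization, and that cases (i)--(ii) are formally identical to the analyses of Theorems~\ref{main2} and~\ref{mainbetw1n2} for $1\le p\le 2$, so that the genuinely new content of the theorem is confined to case (iii).
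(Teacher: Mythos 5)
Your overall outline follows the paper's: necessity from the Plancherel--Polya inequality plus a derivative/Taylor-coefficient estimate on $f/g$ near each $\lambda'$, sufficiency via a regularized Mittag--Leffler series multiplied by $g$, with the discrete singular operators of Section~\ref{BeuAhlsecn} and Lemma~\ref{usef} doing the estimates. Your necessity sketch (Cauchy estimates on the first $N$ Taylor coefficients of the regular part of $f/g$ at $\lambda'$) is essentially the dual formulation of what the paper does (it evaluates $f/g$ at the $N$ points $z_{\lambda'}^k=\lambda'+\delta\omega_k\rho(\lambda')$ and takes discrete Fourier combinations in $k$ to isolate each order $n$); those are the same idea and yours is morally fine, if under-detailed about the triangular system coming from the Leibniz rule applied to $g\cdot\textup{p.v.}\sum_{\lambda\neq\lambda'}a_\lambda/(z-\lambda)$.

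There is, however, a genuine gap in the sufficiency: you take $f=gF$ with $F=G$ the regularized series. This function interpolates $c$, but it is \emph{not} in $\Fpa$ in general. The point is that
\[
\textup{p.v.}\sum_{\lambda\in\Lambda}\frac{a_\lambda}{z-\lambda}
=G(z)+q(z),\qquad
q(z)=\sum_{k=1}^{N}z^{k-1}\,\textup{p.v.}\!\sum_{\lambda\in\Lambda_0}\frac{a_\lambda}{\lambda^{k}},
\]
so $gG=g\cdot\textup{p.v.}\sum_\lambda a_\lambda/(z-\lambda)-gq$. The hypotheses (taking $\lambda'=0$ in (b), (c), (b$'$)) guarantee the coefficients of $q$ are finite, but give no reason for them to vanish; generically $q\not\equiv 0$. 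Since $|g(z)|e^{-\phi(z)}\simeq d_\phi(z,\Lambda)$ does not decay, $g\cdot(\text{nonzero polynomial})\notin\Fpa$, and therefore $gG\notin\Fpa$ whenever $q\neq 0$. (Equivalently: $\Lambda$ is a uniqueness set for $\Fpa$ by the Corollary in Section 4, so at most one of $gG$ and $g\cdot\textup{p.v.}\sum a_\lambda/(z-\lambda)$ can lie in $\Fpa$.) Concretely, the Taylor coefficients of the regular part of $G$ at $\lambda'$ are \emph{not} the quantities of (b), (c), (b$'$) — they pick up contributions from the correction polynomials $\sum_{k=0}^{N-1}z^k/\lambda^{k+1}$ that grow like $|\lambda'|^{N-1}$ — so the ``bounded invertible change of basis'' you invoke does not exist for your choice of $F$.

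The fix is exactly what the paper does: define
\[
f(z)=g(z)\,\textup{p.v.}\!\sum_{\lambda\in\Lambda}\frac{a_\lambda}{z-\lambda}
=g(z)\bigl(G(z)+q(z)\bigr),
\]
which is well defined because the hypotheses make the $\textup{p.v.}$ sums $\sum_{\lambda\in\Lambda_0}a_\lambda/\lambda^{k}$ converge. Then on each cell $Q_{\lambda'}$, the identity
\[
\frac{1}{z-\lambda}=\frac{(z-\lambda')^{N}}{(z-\lambda)(\lambda-\lambda')^{N}}-\sum_{n=1}^{N}\frac{(z-\lambda')^{n-1}}{(\lambda-\lambda')^{n}}
\]
applied to $\textup{p.v.}\sum_{\lambda\in\Lambda_{\lambda'}}a_\lambda/(z-\lambda)$ splits it into a remainder controlled by Lemma~\ref{usef}(ii) and the precise sums appearing in (b), (c), (b$'$), with no extraneous polynomial terms; this is where your Taylor-polynomial heuristic becomes correct. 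With that correction your proof matches the paper's.
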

We finally state our result for $\Finfa$, which again depends on the doubling constant.
\begin{thm}\label{maininf}
Let $\Lambda$ be a critical lattice associated to the multiplier $g$ and let $t$ be the constant occurring in Lemma~\ref{Christ}. There exists
$f\in\Finfa$ satisfying $f|\Lambda=c$ if and only if
\begin{itemize}
\item $\displaystyle{\sup_{\lambda\in\Lambda}|c_\lambda| e^{-\phi(\lambda)}<+\infty}$,
\item
$\displaystyle{\sup_{\lambda'\in\Lambda_0}\Bigl|-\frac{c_0}{g'(0)\lambda'}+\textup{p.v.}\sum_{\lambda\in\Lambda_{\lambda'}\backslash\{0\}}\frac{c_\lambda}{g'(\lambda)}\bigl(\frac{1}{\lambda-\lambda'}-\frac{1}{\lambda}\bigr)\Bigr|<+\infty}$
and
\item There exists an integer $N>\frac{1}{t}$ such that, for every $2\leq n\leq N$,
$$\sup_{\lambda'\in\Lambda}\Bigl|\rho(\lambda')^{n-1}\textup{p.v.}\sum_{\lambda\in\Lambda_{\lambda'}}\frac{c_\lambda}{g'(\lambda)(\lambda-\lambda')^n}\Bigr|<+\infty.$$
\end{itemize}
\end{thm}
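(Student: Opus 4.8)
The plan is to prove both implications along the same two--sided scheme already used for Theorems~\ref{main2}--\ref{mainbig2}, treating $\Finfa$ as the endpoint case: every estimate must be made pointwise (there is no $L^p$ averaging available) and the conditionally convergent sums require extra care, exactly as in Levin's and Ber's treatment of the $L^\infty$ and $L^1$ endpoints in the Paley--Wiener setting. Throughout I write $h=f/g$ and $d_\lambda=c_\lambda/g'(\lambda)$, and I recall from Theorem~\ref{multiplier} and Lemma~\ref{lem4} that $|g(z)|\simeq e^{\phi(z)}d_\phi(z,\Lambda)$ and $|g'(\lambda)|\simeq e^{\phi(\lambda)}/\rho(\lambda)$, and that on each cell $Q_{\lambda'}$ (which has $d_\phi$-diameter $\lesssim1$) one has $\phi(z)\simeq\phi(\lambda')$ and $\rho(z)\simeq\rho(\lambda')$; in particular the first (size) condition is equivalent to $d\in\ell^\infty(\rho^{-1})$.

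\textbf{Necessity.} Suppose $f\in\Finfa$ with $f|\Lambda=c$. The size condition is immediate. Since the zeros of $g$ are simple, $h$ is meromorphic with a simple pole of residue $d_{\lambda'}$ at each $\lambda'$, and $|h(z)|\lesssim 1/d_\phi(z,\Lambda)$. I would first establish a Mittag--Leffler type representation
$$h(z)=\frac{d_0}{z}+\textup{p.v.}\sum_{\lambda\in\Lambda_0}d_\lambda\Bigl(\frac{1}{z-\lambda}+\sum_{j=0}^{N-2}\frac{z^{j}}{\lambda^{j+1}}\Bigr)+a_0,\qquad N=\text{smallest integer}>1/t,$$
with a \emph{constant} term $a_0$: the constancy is forced because $g$ times a polynomial of positive degree cannot lie in $\Finfa$ (as $d_\phi(\cdot,\Lambda)$ is bounded and is bounded below on an unbounded set), while convergence of the regularised series as a principal value follows from $d\in\ell^\infty(\rho^{-1})$ and Lemma~\ref{usef}(ii), which is where $N>1/t$ enters. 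Fixing $\lambda'$ and removing the pole, $R_{\lambda'}(z):=h(z)-\frac{d_{\lambda'}}{z-\lambda'}$ is holomorphic on the disc $D^{\delta_1}(\lambda')$, of radius $\simeq\rho(\lambda')$, and on its boundary $|R_{\lambda'}|\lesssim1$ (both $h$ and the pole term are $O(1)$ there). By the maximum principle and Cauchy's estimates, $\rho(\lambda')^{k}$ times the $k$-th Taylor coefficient of $R_{\lambda'}$ at $\lambda'$ is $\lesssim1$ uniformly. Reading these coefficients off the representation identifies, for $k=n-1\geq1$, the coefficient with $\pm\,\textup{p.v.}\sum_{\lambda\in\Lambda_{\lambda'}}d_\lambda/(\lambda-\lambda')^{n}$, which gives the third family of conditions for $2\leq n\leq N$; and the $k=0$ coefficient $R_{\lambda'}(\lambda')$, after accounting for the fact that the regularising subtraction sits at $0$, is exactly the quantity in the second condition, the summand $-c_0/(g'(0)\lambda')$ being the contribution of $\lambda=0$ (for which the term $d_0/z$ carries no subtraction).

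\textbf{Sufficiency.} Given $c$ satisfying the three conditions I would set $f=g\cdot S$ with $S$ the series displayed above, $d_\lambda=c_\lambda/g'(\lambda)$ and $a_0=0$. The size condition and Lemma~\ref{usef}(ii) (using $N>1/t$) make the tail of $S$ and of each of its derivatives over $\{|\lambda-\lambda'|>C\rho(\lambda')\}$ absolutely convergent, so $S$ is a well-defined meromorphic function with simple poles at $\Lambda$, $f$ is entire, and a residue computation gives $f|\Lambda=c$. To prove $f\in\Finfa$ one works cell by cell: fix $Q_{\lambda'}\subseteq D^{R_1}(\lambda')$ and split $S(z)=\frac{d_{\lambda'}}{z-\lambda'}+S_{\mathrm{near}}(z)+S_{\mathrm{far}}(z)$, where $S_{\mathrm{near}}$ collects the finitely many (by $\rho$-separation) summands with $|\lambda-\lambda'|\leq C\rho(\lambda')$ together with their subtraction terms. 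Using $|g(z)|\simeq e^{\phi(z)}|z-\lambda'|/\rho(\lambda')$ near $\lambda'$, $|g(z)|\lesssim e^{\phi(z)}$ on the cell, and $|g'(\lambda')|\simeq e^{\phi(\lambda')}/\rho(\lambda')$, the pole term contributes $|g(z)\,d_{\lambda'}/(z-\lambda')|\lesssim e^{\phi(z)}|c_{\lambda'}|e^{-\phi(\lambda')}\lesssim e^{\phi(z)}$ by the size condition, and $S_{\mathrm{near}}$ contributes $O(1)$ pointwise. For $S_{\mathrm{far}}$, holomorphic on a disc of radius $\simeq\rho(\lambda')$ containing $Q_{\lambda'}$, I would bound it by $O(1)$ there by estimating it directly on a slightly larger disc: re-expanding the $0$-anchored subtractions around $\lambda'$ and splitting the far sum into dyadic annuli about $\lambda'$, the contributions are controlled by the second condition (lowest order), by the $n\geq2$ conditions (for $2\leq n\leq N$), and by the size condition together with Lemma~\ref{usef}(ii) (for $n>N$); the weights $\rho(\lambda')^{n-1}$ in the hypotheses are exactly those that make these bounds scale-invariant and summable. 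Hence $|g(z)S_{\mathrm{far}}(z)|\lesssim e^{\phi(z)}$ on $Q_{\lambda'}$, and assembling the cell estimates yields $f\in\Finfa$.

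The main obstacle is this last step: obtaining a genuinely \emph{uniform} bound for $S_{\mathrm{far}}$ on each cell. One must reconcile the single globally defined series $S$, whose regularising subtractions are anchored at the origin, with the local data at each $\lambda'$ encoded in the second and third conditions, which live in expansions centred at $\lambda'$; showing that changing the centre produces only contributions already under control, and justifying rigorously that all the principal-value sums converge and that the term-by-term differentiations, re-orderings and evaluations at $\lambda'$ are legitimate, is the crux, and cannot be softened by integration as in the $\Fpa$ cases. The quantitative engine here is the interplay between Lemma~\ref{Christ} and Lemma~\ref{usef}(ii), which is precisely what dictates that the number of cancellation conditions needed is an integer $N>1/t$.
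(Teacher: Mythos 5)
The overall two-sided scheme you describe (representation formula for necessity, cell-by-cell estimation of $g\cdot(\text{regularised series})$ for sufficiency) is exactly what the paper has in mind when it says the proof of Theorem~\ref{maininf} is ``similar and omitted'', and your replacement of the paper's roots-of-unity averaging (at the off-lattice points $z^k_{\lambda'}=\lambda'+\delta\omega_k\rho(\lambda')$) by Cauchy estimates on the Taylor coefficients of $R_{\lambda'}(z)=h(z)-d_{\lambda'}/(z-\lambda')$ over $D^{\delta_1}(\lambda')$ is a clean and legitimate equivalent. But there is a genuine error in the step you build everything on, namely the Mittag--Leffler representation. With $N-1$ regularising subtractions anchored at the origin, the difference $h-S$ is an entire function of polynomial growth of degree up to $N-2$, and the paper's Lemma~\ref{lemrepinf} (whose statement has only the single subtraction $1/\lambda$) shows, via the residue computation on the contours $\gamma(R)$, that this polynomial's coefficients of degree $m-1$ are precisely $-\textup{p.v.}\sum_{\lambda\in\Lambda_0}d_\lambda/\lambda^m$, which are generically nonzero. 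Your argument that ``$g$ times a polynomial of positive degree cannot lie in $\Finfa$'' does not force constancy here, because you cannot isolate $g\cdot P$ from $f=g\cdot S+g\cdot P$ unless you already know $g\cdot S\in\Finfa$, which is not given. This propagates into your sufficiency construction: taking $a_0=0$ leaves in $S$ a polynomial of degree $N-2$ whose presence makes $|g(z)S(z)|e^{-\phi(z)}$ grow like $|z|^{N-2}$ away from $\Lambda$, so the constructed function need not lie in $\Finfa$. The fix is to use the paper's single-subtraction representation $h(z)=w_0+d_0/z+\textup{p.v.}\sum_{\lambda\in\Lambda_0}d_\lambda(1/(z-\lambda)+1/\lambda)$ for necessity, and for sufficiency to set $f=g\cdot\big[d_0/z+\textup{p.v.}\sum_{\lambda\in\Lambda_0}d_\lambda(1/(z-\lambda)+1/\lambda)\big]$, working internally with the longer Taylor polynomials for the re-expansion around each $\lambda'$ but not putting them into the defining series.

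There is also an off-by-one in the regularisation: to apply Lemma~\ref{usef}(ii) and obtain absolute convergence of the tail, the Taylor polynomial must have degree $N-1$ (subtraction terms up to $z^{N-1}/\lambda^{N}$, tail of size $|d_\lambda|/|\lambda|^{N+1}$), exactly as in the paper's $G$. Your $\sum_{j=0}^{N-2}z^j/\lambda^{j+1}$ leaves a tail $\sim|d_\lambda|/|\lambda|^{N}$ and would require $N-1>1/t$ rather than the stated $N>1/t$. Finally, reading the conditions off the Taylor coefficients of $R_{\lambda'}$ requires justifying term-by-term differentiation of the principal value; this is exactly what the paper's residue argument on $\gamma(R)$ sidesteps, since it produces an algebraic identity at the level of partial sums before passing to the limit. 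None of these are fatal to your approach, but as written the representation and the index bookkeeping are incorrect and the term-by-term step is unjustified.
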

We finish this section by showing that the cases $t\leq1/2$ and $t>1/2$ are both possible. In the classical example of $\phi(z)=|z|^2$ we may take $t$ to
be arbitrarily close to $1$. For the other case we consider the function $\phi(z)=|z|^\gamma$ where $\gamma>0$. We may assume, by normalising
appropriately, that $\rho(0)=1$. Then for $\zeta\not\in D(0)$ we have $\rho(\zeta)\simeq|\zeta|^{1-\gamma/2}$. Taking now $z=0$ in Lemma~\ref{Christ} we
see that we must have
$$|\zeta|^{1-\gamma/2}\lesssim|\zeta|^{1-t}$$
for $\zeta\not\in D(0)$ so that $t\leq\gamma/2$. Thus, for the function $\phi(z)=|z|^\gamma$ when $\gamma\leq1$, we must have $t\leq1/2$.

\section{Representation formulas}
In this section we will prove two representation formulas for functions in our generalised Fock spaces in terms of the values of the function on a
critical lattice. These formulas are reminiscent of the Lagrange interpolation formula.
\begin{lemma}\label{lemrepinf}
Let $\Lambda$ be a critical lattice associated to the multiplier $g$. If $f\in\Finfa$ then
\begin{equation}\label{repinf}
f(z)=g(z)\Bigl[w_0+\frac{f(0)}{g'(0)z}+\text{\textup{p.v.}}
\sum_{\lambda\in\Lambda_0}\frac{f(\lambda)}{g'(\lambda)}\left(\frac{1}{z-\lambda}+\frac{1}{\lambda}\right)\Bigr]
\end{equation}
where $w_0=\lim_{z\rightarrow0}\frac{d}{dz}(\frac{zf(z)}{g(z)})=\frac{f'(0)}{g'(0)}-\frac{g''(0)}{2g'(0)}$.
\end{lemma}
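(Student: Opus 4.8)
The plan is to obtain \eqref{repinf} as the Mittag--Leffler (partial fraction) expansion of the meromorphic function $F=f/g$, truncated along a carefully chosen exhaustion of $\C$. First I would record the structural facts: by Theorem~\ref{multiplier} together with Lemma~\ref{lem4}, $|g(z)|\simeq e^{\phi(\lambda)}\,|z-\lambda|/\rho(\lambda)$ near each $\lambda\in\Lambda$, so every zero of $g$ is simple; hence $F$ is meromorphic on $\C$ with a simple pole exactly at each $\lambda\in\Lambda$ and $\operatorname{Res}_\lambda F=f(\lambda)/g'(\lambda)$. Since $0\in\Lambda$, the function $\zeta F(\zeta)=\zeta f(\zeta)/g(\zeta)$ is holomorphic at the origin, and the quantity $w_0$ in the statement is exactly the coefficient of $\zeta$ in its Taylor expansion there.

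For the expansion itself I would fix $z\notin\Lambda$ and integrate
\[
\Phi(\zeta)=\frac{z\,F(\zeta)}{\zeta(\zeta-z)}
\]
over $\Gamma_R:=\partial U_R$, where $U_R=\bigcup_{|\lambda|<R}Q_\lambda$, and let $R\to\infty$. Two estimates are needed on $\Gamma_R$. First, each $\zeta\in\Gamma_R$ lies on $Q_\lambda\cap Q_{\lambda'}$ for two \emph{distinct} $\lambda,\lambda'\in\Lambda$, so $d_\phi(\zeta,\lambda)=d_\phi(\zeta,\lambda')=d_\phi(\zeta,\Lambda)$; since $\Lambda$ is $\rho$-separated, hence $d_\phi$-separated, the triangle inequality gives $d_\phi(\zeta,\Lambda)\gtrsim1$, and Theorem~\ref{multiplier} then yields $|g(\zeta)|\gtrsim e^{\phi(\zeta)}$, whence $|F(\zeta)|=|f(\zeta)|/|g(\zeta)|\lesssim|f(\zeta)|e^{-\phi(\zeta)}\le\|f\|_{\Finfa}$ on $\Gamma_R$. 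Second, using $D^\delta(\lambda)\subseteq Q_\lambda\subseteq D^{R_1}(\lambda)$ and the bound $\rho(\zeta)\lesssim|\zeta|^\beta$ from \eqref{eqn5}, $U_R$ contains $D(0,R/2)$ and $\Gamma_R$ lies in the annulus $\{R/2\le|\zeta|\le2R\}$ once $R$ is large; a packing argument with the cells (which tile a thin annulus of area $\simeq R\rho$, have perimeters $\simeq\rho(\lambda)$ and number $\simeq R/\rho$ there) bounds $\operatorname{length}(\Gamma_R)\lesssim R$, up to a harmless logarithmic factor. Combining, $|\Phi|\lesssim|z|\,\|f\|_{\Finfa}/R^{2}$ on $\Gamma_R$, so
\[
\Bigl|\int_{\Gamma_R}\Phi(\zeta)\,d\zeta\Bigr|\lesssim\frac{|z|\,\|f\|_{\Finfa}}{R}\longrightarrow 0\qquad(R\to\infty).
\]

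Since $z$ and $0$ lie in the interior of $U_R$ for $R$ large, the residue theorem gives $\tfrac{1}{2\pi i}\int_{\Gamma_R}\Phi=\operatorname{Res}_z\Phi+\operatorname{Res}_0\Phi+\sum_{\lambda\in\Lambda_0,\,|\lambda|<R}\operatorname{Res}_\lambda\Phi$. A direct computation gives $\operatorname{Res}_z\Phi=F(z)$; $\operatorname{Res}_\lambda\Phi=\frac{z}{\lambda(\lambda-z)}\cdot\frac{f(\lambda)}{g'(\lambda)}=-\frac{f(\lambda)}{g'(\lambda)}\bigl(\frac{1}{z-\lambda}+\frac{1}{\lambda}\bigr)$; and, reading off the Laurent expansion of $\Phi$ at $0$ from the Taylor expansion of $\zeta F(\zeta)$, $\operatorname{Res}_0\Phi=-w_0-\frac{f(0)}{g'(0)z}$. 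Letting $R\to\infty$ and using the vanishing of the contour integral, the principal value $\operatorname{p.v.}\sum_{\lambda\in\Lambda_0}\frac{f(\lambda)}{g'(\lambda)}\bigl(\frac{1}{z-\lambda}+\frac{1}{\lambda}\bigr)$ converges and equals $F(z)-w_0-\frac{f(0)}{g'(0)z}$; multiplying by $g(z)$ yields \eqref{repinf} for $z\notin\Lambda$. Finally both sides of \eqref{repinf} are entire (on the right the simple poles of the bracketed expression at the points of $\Lambda$ are cancelled by the zeros of $g$), so the identity extends from $\C\setminus\Lambda$ to all of $\C$, and a short Taylor expansion of $f$ and $g$ at the origin gives the stated closed form for $w_0$.

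The step I expect to be the main obstacle is the vanishing of the contour integral, which splits into (i) producing an exhaustion whose boundary stays a fixed $d_\phi$-distance away from $\Lambda$ --- this is what forces the choice $\Gamma_R=\partial\bigl(\bigcup_{|\lambda|<R}Q_\lambda\bigr)$ rather than circles, which in general cannot avoid a fixed neighbourhood of $\Lambda$ --- and (ii) the length bound $\operatorname{length}(\Gamma_R)=o(R^{2})$, which rests on the geometry of the cells $Q_\lambda$ and the two-sided bound \eqref{eqn5} on $\rho$. A minor further point is checking that the limit along $R$ is a genuine principal value, i.e.\ that the increments of $\sum_{|\lambda|<R}$ across the finitely many $\lambda$ on a single circle $\{|\zeta|=r\}$ tend to $0$ as $r\to\infty$; this follows from the same estimates.
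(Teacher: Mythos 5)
Your argument takes a genuinely more direct route than the paper's. The paper first builds an auxiliary meromorphic function $G(z)=\frac{d_0}{z}+\sum_{\lambda\in\Lambda_0}d_\lambda\bigl(\frac{1}{z-\lambda}+\frac1\lambda+\cdots+\frac{z^{n-1}}{\lambda^n}\bigr)$ with high-order Taylor corrections ($n>2+2\eta+\beta$) to force absolute convergence, uses a growth estimate and the maximum principle to show $f-gG=gh$ with $h$ a polynomial of degree $<n$, and then identifies the coefficients of $h$ as principal values $-\mathrm{p.v.}\sum_{\lambda\in\Lambda_0}d_\lambda/\lambda^m$ by contour integration, effectively stripping the extra Taylor terms back off. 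You bypass all of this with a single application of the residue theorem to $\Phi(\zeta)=\frac{zF(\zeta)}{\zeta(\zeta-z)}$, reading \eqref{repinf} directly off the residues; your computations of $\operatorname{Res}_z\Phi$, $\operatorname{Res}_0\Phi$ and $\operatorname{Res}_\lambda\Phi$ are all correct, and the argument is cleaner and makes the origin of the principal value transparent.

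There is, however, a real gap in the vanishing of the contour integral. You take $\Gamma_R=\partial\bigl(\bigcup_{|\lambda|<R}Q_\lambda\bigr)$ and assert that each cell has perimeter $\simeq\rho(\lambda)$, hence $\operatorname{length}(\Gamma_R)\lesssim R$. But the inclusions $D^\delta(\lambda)\subseteq Q_\lambda\subseteq D^{R_1}(\lambda)$ control only the \emph{diameter} of $Q_\lambda$; the $Q_\lambda$ are Voronoi cells for the non-flat metric $d_\phi$, whose boundaries (level sets of the merely Lipschitz function $d_\phi(\cdot,\lambda)-d_\phi(\cdot,\lambda')$) carry no a priori rectifiability or length bound in this generality, and I see no quick way to supply one. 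Your reason for rejecting circles --- that they cannot stay a fixed $d_\phi$-distance from $\Lambda$ --- is correct for literal circles but overlooks the modified circle the paper already uses: $\gamma(R)$ is $\{|w|=R\}$ with the arcs within $\delta'\rho(\lambda)$ of a lattice point replaced by arcs of $\{|w-\lambda|=\delta'\rho(\lambda)\}$, arranged so that $\lambda$ lies inside $\gamma(R)$ iff $|\lambda|<R$. That contour stays uniformly $d_\phi$-bounded away from $\Lambda$, encloses exactly $\{|\lambda|<R\}$ (so it produces precisely the principal value in the statement), and has $\operatorname{length}(\gamma(R))\simeq R$ by construction, since the bulges are pairwise disjoint and each has length comparable to the arc it replaces. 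Substituting $\gamma(R)$ for your $\Gamma_R$ closes the gap; with that single change your proof is correct.
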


\begin{proof}
We denote $d_{\lambda}=f(\lambda)/g'(\lambda)$ and note that
$|d_\lambda/\rho(\lambda)|\simeq|c_\lambda|e^{-\phi(\lambda)}\leq\sup_{z\in\mathbb{C}}|f(z)|e^{-\phi(z)}=\|f\|_\Finfa$ so that
$\left(d_\lambda/\rho(\lambda)\right)_{\lambda\in\Lambda}\in\ell^{\infty}$ and $\left\|d_\lambda/\rho(\lambda)\right\|_\infty\lesssim\|f\|_{\Finfa}$. Let
$\beta$ and $\eta$ be as in~\eqref{eqn5} and fix a positive integer $n>2+2\eta+\beta$. We will write
\begin{equation}\label{lagr}
f(z)=\sum_{\lambda\in\Lambda}f(\lambda)g_\lambda(z)
\end{equation}
where $g_\lambda$ are entire functions satisfying
$g_\lambda(\lambda')=\delta_{\lambda\lambda'}$. The obvious
candidate for $g_\lambda(z)$ is the function
$g_\lambda(z)=\frac{g(z)}{g'(\lambda)(z-\lambda)}$, however the
resultant series is in general not convergent. We shall keep
$g_0(z)=\frac{g(z)}{g'(0)z}$, but instead take
$g_\lambda(z)=\frac{g(z)}{g'(\lambda)}\left(\frac{1}{z-\lambda}-p_{n-1}(z)\right)$
for $\lambda\neq0$, where $p_{n-1}$ is the Taylor polynomial of
degree $n-1$ of the function $C_\lambda(z)=\frac{1}{z-\lambda}$
expanded around $0$. Note that we still have
$g_\lambda(\lambda')=\delta_{\lambda\lambda'}$ but now the series is
pointwise convergent. In fact
\begin{equation}\label{tayl}
\frac{1}{z-\lambda}-p_{n-1}(z)=\frac{1}{z-\lambda}+\frac{1}{\lambda}+\frac{z}{\lambda^2}+\cdots+\frac{z^{n-1}}{\lambda^n}=\frac{z^n}{\lambda^n(z-\lambda)}
\end{equation}
so that if we define
$$G(z)=\frac{d_0}{z}+
\sum_{\lambda\in\Lambda_0}d_\lambda\left(\frac{1}{z-\lambda}+\frac{1}{\lambda}+\frac{z}{\lambda^2}+\cdots+\frac{z^{n-1}}{\lambda^n}\right) =\frac{d_0}{z}+
\sum_{\lambda\in\Lambda_0}d_\lambda\frac{z^n}{\lambda^n(z-\lambda)}$$ then for any $K$ a compact subset of $\mathbb{C}\backslash\Lambda$ we have
$$\sum_{\lambda\in\Lambda_0}\Bigl|d_\lambda\frac{z^n}{\lambda^n(z-\lambda)}\Bigr|\lesssim\sum_{\lambda\in\Lambda_0}\frac{|d_\lambda|}{|\lambda|^n} \lesssim\sum_{\lambda\in\Lambda_0}\frac{\rho(\lambda)}{|\lambda|^n}
\lesssim\sum_{\lambda\in\Lambda_0}\frac{1}{|\lambda|^{n-\beta}}<+\infty$$ for all $z\in K$, since $n-\beta>2+2\eta$. Hence $G$ defines a meromorphic
function on $\mathbb{C}$ with a simple pole at each $\lambda\in\Lambda$. Consequently $gG$ is an entire function that agrees with $f$ at each
$\lambda\in\Lambda$. This implies that there exists an entire function $h$ such that $f-gG=gh$.

Fix $\epsilon>0$ and $0<\delta<\delta_1$, and define $\Omega=\C\backslash\bigcup_{\lambda\in\Lambda}D^\delta(\lambda)$. Now for each $z\in\Omega$ we have
$$\left|\frac{f(z)}{g(z)}\right|\simeq\frac{|f(z)|e^{-\phi(z)}}{d_\phi(z,\Lambda)}\simeq|f(z)|e^{-\phi(z)}\leq\|f\|_\Finfa.$$
Also, when $z\in\Omega$, we obviously have
$$|G(z)|\leq\frac{|d_0|}{|z|}+|z|^n\sum_{\lambda\in\Lambda_0}\Bigl|\frac{d_{\lambda}}{\lambda^n(z-\lambda)}\Bigr|.$$
We split this sum over two separate ranges. For any $R>1$,
$$\sum_{|\lambda|>R}\Bigl|\frac{d_{\lambda}}{\lambda^n(z-\lambda)}\Bigr|\lesssim\sum_{|\lambda|>R}\frac{\rho(\lambda)}{|\lambda|^n|z-\lambda|}\lesssim\sum_{|\lambda|>R}\frac{1}{|\lambda|^{n}}<\epsilon$$
for sufficiently large $R$. Fixing one such $R$ we then have, for $|z|>2R$,
$$\sum_{0<|\lambda|\leq R}\left|\frac{d_{\lambda}}{\lambda^n(z-\lambda)}\right|\leq\frac{2}{|z|}\sum_{0<|\lambda|\leq R}\frac{|d_{\lambda}|}{|\lambda|^n}=\frac{C}{|z|}$$
for some constant $C$. Hence
$$|G(z)|\leq\left|\frac{d_0}{z}\right|+|z|^n\Bigl(\frac{C}{|z|}+\epsilon\Bigr)=o(z^n)$$
for $|z|\geq2R$. Gathering these estimates we have $|h(z)|=o(z^n)$
for $z\in\Omega$ of sufficiently large modulus. Applying now the
maximum principle to $h$ on $D^\delta(\lambda)$ for each
$\lambda\in\Lambda$ far from the origin we see that this holds for
all $z\in\mathbb{C}$ of sufficiently large modulus. We conclude that
$h$ is a polynomial of degree less than or equal to $n-1$.

Note that if we define
$$H(z)=G(z)-\frac{d_0}{z}=\sum_{\lambda\in\Lambda_0}d_\lambda\left(\frac{1}{z-\lambda}+\frac{1}{\lambda}+\frac{z}{\lambda^2}+\cdots+\frac{z^{n-1}}{\lambda^n}\right)$$
then $H^{(j)}(0)=0$ for $0\leq j<n$. Since
$$h(z)=\frac{f(z)}{g(z)}-G(z)=\frac{1}{z}\left(\frac{zf(z)}{g(z)}-d_0\right)-H(z)$$
we may evaluate $h$ by computing the Laurent expansion of $f/g$ around $0$. This yields
\begin{equation}\label{h}
h(z)=\sum_{m=1}^n\frac{1}{m!}\lim_{w\rightarrow0}\frac{d^m}{dw^m}\left(\frac{wf(w)}{g(w)}\right)z^{m-1}.
\end{equation}
Fix some $0<\delta'<\delta_1$ and define $\gamma(R)$ to be the closed curve consisting of the portion of the circle $|z|=R$ for which $|z-\lambda|\geq
\delta'\rho(\lambda)$ and of the portions the circles $|z-\lambda|=\delta'\rho(\lambda)$ that intersect the circle $|z|=R$ in such a manner that $\lambda$
is in the domain bounded by $\gamma(R)$ if and only if $|\lambda|<R$. Then the Cauchy residue theorem implies that
$$\frac{1}{2\pi i}\int_{\gamma(R)}\frac{f(w)}{g(w)w^m}dw=\frac{1}{m!}\lim_{w\rightarrow0}\frac{d^m}{dw^m}\left(\frac{wf(w)}{g(w)}\right)+\sum_{0<|\lambda|<R}\frac{d_\lambda}{\lambda^m}.$$
Now the length of the contour of integration is comparable to the length of the circle of radius $R$. Moreover $d_\phi(z,\Lambda)$ is bounded away from
$0$ for $z\in\gamma(R)$ so that $\left|f(z)/g(z)\right|$ is bounded above. This implies that
$$\lim_{R\rightarrow\infty}\frac{1}{2\pi i}\int_{\gamma(R)}\frac{f(w)}{g(w)w^m}dz=0$$
for $m\geq2$, whence
$$\frac{1}{m!}\lim_{w\rightarrow0}\frac{d^m}{dw^m}\left(\frac{wf(w)}{g(w)}\right)=-\text{\textup{p.v.}}\sum_{\lambda\in\Lambda_0}\frac{d_\lambda}{\lambda^m}.$$
Inserting this expression into~\eqref{h} yields
$$h(z)=\lim_{w\rightarrow0}\frac{d}{dw}\left(\frac{wf(w)}{g(w)}\right)-\text{\textup{p.v.}}\sum_{\lambda\in\Lambda_0}d_\lambda\sum_{m=2}^n\frac{z^{m-1}}{\lambda^m}.$$
Computing now $f=g(G+h)$ completes the proof.
\end{proof}
\begin{rem}
Given any function $f\in\Finfa$ the function $f+Cg$ is also in $\Finfa$ for any constant $C$, and the functions agree at every $\lambda\in\Lambda$. Thus
$\Lambda$ is not a set of uniqueness for this space. This result, however, tells us that this is the only possibility, that is if $f,\tilde{f}\in\Finfa$
and $f(\lambda)=\tilde{f}(\lambda)$ for all $\lambda\in\Lambda$ then $f-\tilde{f}=Cg$ for some constant $C$.
\end{rem}

\begin{cor}
Let $\Lambda$ be a critical lattice associated to the multiplier $g$. If $f\in\Fpa$ for $1\leq p<+\infty$ then
\begin{equation}\label{repfinite}
f(z)=g(z)\text{\textup{p.v.}}\sum_{\lambda\in\Lambda}\frac{f(\lambda)}{g'(\lambda)(z-\lambda)}.
\end{equation}
\end{cor}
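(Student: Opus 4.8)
The plan is to derive \eqref{repfinite} from the representation formula \eqref{repinf} in Lemma~\ref{lemrepinf}, exploiting the fact that $\Fpa\subseteq\Finfa$ (noted at the end of Section~2.2) so that \eqref{repinf} is available, together with the stronger summability \eqref{basicnec} that holds when $f\in\Fpa$ with $p<\infty$. First I would write $d_\lambda=f(\lambda)/g'(\lambda)$ as before and observe that, since $|d_\lambda/\rho(\lambda)|\simeq|f(\lambda)|e^{-\phi(\lambda)}$, the bound \eqref{basicnec} gives $(d_\lambda/\rho(\lambda))\in\ell^p$, hence $(d_\lambda/\rho(\lambda))\in\ell^\infty$ and in fact $d_\lambda/\rho(\lambda)\to0$. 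Starting from
$$
f(z)=g(z)\Bigl[w_0+\frac{d_0}{z}+\text{p.v.}\sum_{\lambda\in\Lambda_0}d_\lambda\Bigl(\frac{1}{z-\lambda}+\frac{1}{\lambda}\Bigr)\Bigr],
$$
the goal is to show the bracketed quantity equals $\text{p.v.}\sum_{\lambda\in\Lambda}d_\lambda/(z-\lambda)$, i.e. that
$$
w_0+\text{p.v.}\sum_{\lambda\in\Lambda_0}\frac{d_\lambda}{\lambda}=0,
$$
interpreting the sums as principal values $\lim_{R\to\infty}\sum_{|\lambda|<R}$.

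The main work is thus to identify the constant $w_0=\lim_{w\to0}\frac{d}{dw}\bigl(\tfrac{wf(w)}{g(w)}\bigr)$ with $-\text{p.v.}\sum_{\lambda\in\Lambda_0}d_\lambda/\lambda$. I would do this by the same contour-integral argument used at the end of the proof of Lemma~\ref{lemrepinf}, but now for the exponent $m=1$: with $\gamma(R)$ the curve defined there (the arc $|z|=R$ deformed around the disks $D^{\delta'}(\lambda)$), the residue theorem gives
$$
\frac{1}{2\pi i}\int_{\gamma(R)}\frac{f(w)}{g(w)w}\,dw=\lim_{w\to0}\frac{d}{dw}\Bigl(\frac{wf(w)}{g(w)}\Bigr)+\sum_{0<|\lambda|<R}\frac{d_\lambda}{\lambda}=w_0+\sum_{0<|\lambda|<R}\frac{d_\lambda}{\lambda}.
$$
It therefore suffices to show the left-hand side tends to $0$ as $R\to\infty$. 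This is where the hypothesis $p<\infty$ (as opposed to $f\in\Finfa$) is essential: for $m=1$ the contour has length $\simeq R$ and $|f/g|$ is only bounded on $\gamma(R)$, so the crude estimate used for $m\ge2$ fails. Instead I would use that $|f(w)|e^{-\phi(w)}\to0$ as $|w|\to\infty$ (the uniform decay stated at the end of Section~2.2, valid precisely because $f\in\Fpa$, $p<\infty$), together with $|g(w)|\simeq e^{\phi(w)}d_\phi(w,\Lambda)$ and the fact that $d_\phi(w,\Lambda)$ is bounded below on $\gamma(R)$, to get $|f(w)/g(w)|\le\epsilon_R$ on $\gamma(R)$ with $\epsilon_R\to0$; then
$$
\Bigl|\frac{1}{2\pi i}\int_{\gamma(R)}\frac{f(w)}{g(w)w}\,dw\Bigr|\lesssim\frac{\epsilon_R}{R}\cdot\text{length}(\gamma(R))\lesssim\epsilon_R\to0.
$$
This yields $w_0=-\text{p.v.}\sum_{\lambda\in\Lambda_0}d_\lambda/\lambda$, and substituting back into \eqref{repinf} collapses the bracket to $\text{p.v.}\sum_{\lambda\in\Lambda}d_\lambda/(z-\lambda)$, which is \eqref{repfinite}. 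The one remaining point is convergence of this final principal-value series for fixed $z\notin\Lambda$: since $(d_\lambda/\rho(\lambda))$ is bounded and, by a computation as in Section~2.3, $\sum_{|\lambda|>1}\rho(\lambda)/|\lambda|$ need not converge absolutely, one genuinely needs the principal value here — but that is exactly what the identity above delivers, the partial sums $\sum_{|\lambda|<R}d_\lambda/(z-\lambda)$ converging because $\sum_{|\lambda|<R}d_\lambda/\lambda$ does (by the contour argument) and $\tfrac{1}{z-\lambda}-\tfrac{1}{\lambda}=O(|\lambda|^{-2})$ has absolutely convergent tail. I expect the delicate step to be the $O(\epsilon_R/R)\cdot O(R)$ estimate on $\gamma(R)$, i.e. making rigorous that the uniform decay of $|f|e^{-\phi}$ beats the growing contour length for the exponent $m=1$.
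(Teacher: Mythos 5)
Your proposal is correct and follows essentially the same route as the paper's own (quite terse) proof: use $\Fpa\subseteq\Finfa$ to get \eqref{repinf}, then run the $\gamma(R)$ contour argument for $m=1$ and observe that the integral now vanishes because $|f/g|\to0$ uniformly on $\gamma(R)$ when $p<\infty$, forcing $w_0=-\text{p.v.}\sum_{\lambda\in\Lambda_0}d_\lambda/\lambda$. One small slip: the claim that $\sum_{\lambda}d_\lambda(1/(z-\lambda)-1/\lambda)$ has an absolutely convergent tail need not hold (since $\rho(\lambda)$ may grow and $\sum\rho(\lambda)/|\lambda|^2$ can diverge), but this is harmless — the two principal-value limits (from \eqref{repinf} and from the contour identity) each exist with the same truncation $|\lambda|<R$, so their difference gives the principal value in \eqref{repfinite} directly without any absolute-convergence input.
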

\begin{proof}
We use the same notation. Since $\Fpa\subseteq\Finfa$ we know that~\eqref{repinf} must hold. But now
$$\lim_{R\rightarrow\infty}\frac{1}{2\pi i}\int_{\gamma(R)}\frac{f(w)}{g(w)w}dz=0$$
so that $w_0=-\text{\textup{p.v.}}\sum_{\lambda\in\Lambda_0}\frac{d_\lambda}{\lambda}$.
\end{proof}
\begin{rems}
This shows that $\Lambda$ is a set of uniqueness for these spaces.

This representation is \eqref{lagr} with the obvious choice of $g_\lambda$, except that we are taking principal values of the sum. In fact if $p=1$ then
the sum appearing in~\eqref{repfinite} is absolutely convergent, so the principal value may be ignored. In this case the proof may be simplified by taking
$G$ to be this sum and estimating similarly. The decay of this function away from the lattice means we have no need to invoke the Cauchy residue theorem,
or involve principal values.
\end{rems}

\section{The discrete Beurling-Ahlfors transform}\label{BeuAhlsecn}
It is well known that the Beurling-Ahlfors transform given by
\begin{equation}\label{cnts}
T[f](\zeta)=\lim_{\epsilon\rightarrow0}\int_{\mathbb{C}\backslash D(\zeta,\epsilon)}\frac{f(z)}{(\zeta-z)^{2}}dm(z),
\end{equation}
where $m$ denotes the Lebesgue measure on the plane, is a bounded linear operator from $L^{p}(\mathbb{C})$ to $L^{p}(\mathbb{C})$ for $1<p<+\infty$. (It
should be noted that this differs from the usual definition by a factor of $-\frac{1}{\pi}$, and that it is customary to denote this limit as a principle
value. We have avoided doing so to eliminate any possible confusion with the principal value of a sum.) In fact this also holds if we replace
$L^{p}(\mathbb{C})$ by a more general weighted space. We make use of the following definition:
\begin{defn}\cite[Ch.V \S1 p. 194]{St93}
A weight $\omega$ on $\R^n$ is said to be a \textit{Muckenhoupt
$A_p$ weight} if it is locally integrable and there exists some
constant $A$ such that
\begin{equation}\label{Apdef}
\left(\frac{1}{|B|}\int_B \omega(x)dm(x)\right)\left(\frac{1}{|B|}\int_B \omega(x)^{-\frac{q}{p}}dm(x)\right)^{\frac{p}{q}}\leq A<\infty
\end{equation}
for all balls $B$ in $\R^n$. Here $m$ is Lebesgue measure on $\R^n$, $q$ is the H\"{o}lder conjugate exponent of $p$ (that is $\frac{1}{p}+\frac{1}{q}=1$)
and $|B|$ is the Lebesgue measure of the ball $B$. The least constant $A$ for which this holds is called the $A_p$ bound of $\omega$, denoted
$A_p(\omega)$
\end{defn}
We shall of course be interested in $\R^2$ which we identify with $\C$. Now the corollary to \cite[Ch. V,\S4.2, Th. 2]{St93} combined with \cite[Ch.
V,\S4.5.2]{St93} show that $T$ is a bounded linear operator from $L^{p}(\omega)$ to $L^{p}(\omega)$ for $1<p<+\infty$ for any $A_p$ weight $\omega$. (In
fact the proof is given for a much more general class of integral operators, of which $T$ is a special case.) We aim to use this property to study a
discrete analogue.

We shall be interested in the case when $\rho^{p-2}$ is an $A_p$ weight. Substituting into \eqref{Apdef} and re-formulating shows that this is equivalent
to saying that there exists some constant $A$ such that
\begin{equation}\label{Appart}
\frac{1}{|D|}\left(\int_D \rho(z)^p d\nu(z)\right)^\frac{1}{p}\left(\int_D \rho(z)^q d\nu(z)\right)^{\frac{1}{q}}\leq A
\end{equation}
for all discs $D$ in the plane. Here $d\nu(z)=dm(z)/\rho(z)^2$. We note that this is trivially satisfied if $p=2$. It is also satisfied for all $p$ if
$\rho(z)\simeq1$, as is the case in the classical Bargmann-Fock space. We now construct an example to show that there are situations where this condition
does not hold. As a first observation, since \eqref{Appart} is symmetric in $p$ and $q$, we can assume $p<2$. We note that if $D=D(\zeta,R)$ is of
sufficiently small radius, then $\rho(z)\simeq\rho(\zeta)$ for all $z\in D$ by \eqref{Lip}. Thus
\begin{multline*}
\frac{1}{|D|}\left(\int_D \rho(z)^p d\nu(z)\right)^\frac{1}{p}\left(\int_D \rho(z)^q d\nu(z)\right)^{\frac{1}{q}}\\
\simeq\frac{1}{|D|}\left(\rho(\zeta)^{p-2}\int_D dm(z)\right)^\frac{1}{p}\left(\rho(\zeta)^{q-2}\int_D dm(z)\right)^{\frac{1}{q}}=1
\end{multline*}
so that it suffices to check only discs of large radius.

We will take $\phi(z)=C_\gamma|z|^\gamma$ for some positive constants $\gamma$ and $C_\gamma$, which means that $\rho(z)\simeq\rho(0)$ for $z\in D(0)$ and
$\rho(z)\simeq|z|^{1-\frac{\gamma}{2}}$ for $z\not\in D(0)$. By choosing $C_\gamma$ appropriately, we may assume $\rho(0)=2$. We pick $R>\rho(0)$ and take
$D=D(0,R)$. Now
\begin{align*}
\left(\int_D \rho(z)^p d\nu(z)\right)^\frac{1}{p}&=\left(\int_{D(0)} \rho(z)^p d\nu(z)+\int_{D\backslash D(0)} \rho(z)^p d\nu(z)\right)^\frac{1}{p}\\
&\simeq\left(\rho(0)^{p-2}|D(0)|+\int_{\rho(0)}^R r^{(1-\frac{\gamma}{2})(p-2)}rdr\right)^\frac{1}{p}\\
&\simeq\left(\rho(0)^p+\frac{R^{p-\frac{\gamma p}{2}+\gamma}-\rho(0)^{p-\frac{\gamma p}{2}+\gamma}}{p-\frac{\gamma p}{2}+\gamma}\right)^\frac{1}{p}\\
&\simeq\left(R^{p-\frac{\gamma p}{2}+\gamma}\right)^\frac{1}{p}=R^{1-\frac{\gamma}{2}+\frac{\gamma}{p}}
\end{align*}
since $p-\frac{\gamma p}{2}+\gamma>0$ for $p<2$. We now chose some $\gamma$ such that $q-\frac{\gamma q}{2}+\gamma<0$. Then an identical computation gives
$$\left(\int_D \rho(z)^q d\nu(z)\right)^\frac{1}{q} \simeq\left(\rho(0)^q+\frac{R^{q-\frac{\gamma q}{2}+\gamma}-\rho(0)^{q-\frac{\gamma
q}{2}+\gamma}}{q-\frac{\gamma q}{2}+\gamma}\right)^\frac{1}{q} \simeq\rho(0)=2.$$ Therefore
$$\frac{1}{|D|}\left(\int_D \rho(z)^p d\nu(z)\right)^\frac{1}{q}\left(\int_D \rho(z)^q d\nu(z)\right)^{\frac{1}{q}}\simeq\frac{2}{R^2}R^{1-\frac{\gamma}{2}+\frac{\gamma}{p}}\simeq R^{-1-\frac{\gamma}{2}+\frac{\gamma}{p}}$$
which is only uniformly bounded if $-1-\frac{\gamma}{2}+\frac{\gamma}{p}<0$. However
$$-1-\frac{\gamma}{2}+\frac{\gamma}{p}=-1-\frac{\gamma}{2}+\gamma(1-\frac{1}{q})=-1+\frac{\gamma}{2}+\frac{\gamma}{q}$$
which we have assumed to be positive. This shows that there exist situations where $\rho^{p-2}$ is not an $A_p$ weight.

As before $\Lambda=\mathcal{Z}(g)$ will be the irregular lattice we are considering. Given a sequence $d\in\ell^{p}(\rho^{-1})$ we define, for each
$\lambda'\in\Lambda$,
\begin{equation}\label{defn}
B_{\lambda'}(d)=\sum_{\lambda\in\Lambda_{\lambda'}} \frac{d_{\lambda}}{(\lambda'-\lambda)^{2}}
\end{equation}
which we shall normally write as $B_{\lambda'}$, suppressing the dependence on $d$. It is clear that this is the discrete analogue of~\eqref{cnts}.
Lemma~\ref{usef} shows that, for $1\leq p\leq2$, this sum converges absolutely for each $\lambda'\in\Lambda$. Also, by Lemma~\ref{usef}, this sum
converges for $2<p<\infty$ if $t>1/2$ where $t$ is the constant occurring in Lemma~\ref{Christ}. Our main result is the following, which is proved using
the boundedness of~\eqref{cnts}.
\begin{thm}\label{discBA}
Fix $1<p<+\infty$ and suppose that $\rho^{p-2}$ is an $A_p$ weight. Define the operator
\begin{align*}
B:\ell^{p}(\rho^{-1})&\rightarrow\mathbb{C}^{\Lambda}\\
 d&\mapsto (B_{\lambda'})_{\lambda'\in\Lambda}
\end{align*}
where $B_{\lambda'}$ is given by~\eqref{defn}. Then $B$ is a bounded linear operator from $\ell^{p}(\rho^{-1})$ to $\ell^{p}(\rho)$ for $1<p\leq2$. If in
addition $t>1/2$ then the result also holds for $2<p<+\infty$. Here $t$ is the constant occurring in Lemma~\ref{Christ}.
\end{thm}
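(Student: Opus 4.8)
The plan is to transfer the boundedness of the \emph{continuous} Beurling--Ahlfors transform $T$ on the weighted space $L^p(\omega)$, with $\omega=\rho^{p-2}$, to the discrete operator $B$ by ``spreading'' a sequence into a function supported on the cells $D^\delta(\lambda)$ and then ``collapsing'' back by averaging. Fix $0<\delta<\delta_1$, so that the discs $D^\delta(\lambda)$ are pairwise disjoint, and recall $|D^\delta(\lambda)|\simeq\rho(\lambda)^2$ and $\rho(z)\simeq\rho(\lambda)$ on $D^\delta(\lambda)$. Given $d\in\ell^p(\rho^{-1})$ put
$$F=\sum_{\lambda\in\Lambda}\frac{d_\lambda}{\rho(\lambda)^2}\,\chi_{D^\delta(\lambda)}.$$
Because the supports are disjoint,
$$\int_\C|F(z)|^p\rho(z)^{p-2}\,dm(z)=\sum_{\lambda\in\Lambda}\frac{|d_\lambda|^p}{\rho(\lambda)^{2p}}\int_{D^\delta(\lambda)}\rho(z)^{p-2}\,dm(z)\simeq\sum_{\lambda\in\Lambda}\Bigl|\frac{d_\lambda}{\rho(\lambda)}\Bigr|^p,$$
so $F\in L^p(\omega)$ with $\|F\|_{L^p(\omega)}\simeq\|d\|_{\ell^p(\rho^{-1})}$; since $\omega$ is assumed to be an $A_p$ weight, the cited theorems give $\|T[F]\|_{L^p(\omega)}\lesssim\|d\|_{\ell^p(\rho^{-1})}$.

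The heart of the matter is to recover $B_{\lambda'}(d)$ from $T[F]$. Split $F=F_{\lambda'}+(F-F_{\lambda'})$, where $F_{\lambda'}=\tfrac{d_{\lambda'}}{\rho(\lambda')^2}\chi_{D^\delta(\lambda')}$. As $F-F_{\lambda'}$ vanishes on $D^\delta(\lambda')$, for a.e.\ $\zeta\in D^\delta(\lambda')$ the value $T[F-F_{\lambda'}](\zeta)$ is an ordinary integral, absolutely convergent by the convergence of $B_{\lambda'}$ noted before the theorem together with $|\zeta-\lambda|\simeq|\lambda-\lambda'|$; applying the mean value property to the holomorphic function $z\mapsto(\zeta-z)^{-2}$ on each $\overline{D^\delta(\lambda)}$, $\lambda\ne\lambda'$, gives
$$T[F](\zeta)=\frac{d_{\lambda'}}{\rho(\lambda')^2}\,T[\chi_{D^\delta(\lambda')}](\zeta)+\pi\delta^2\!\!\sum_{\lambda\in\Lambda_{\lambda'}}\frac{d_\lambda}{(\zeta-\lambda)^2},\qquad\text{a.e. }\zeta\in D^\delta(\lambda').$$
Now average over $D^\delta(\lambda')$: the sum converges uniformly there, and the mean value property in $\zeta$ turns $\mathrm{avg}_{D^\delta(\lambda')}(\zeta-\lambda)^{-2}$ into $(\lambda'-\lambda)^{-2}$, so the average of the second term is exactly $\pi\delta^2 B_{\lambda'}(d)$; for the first term, Cauchy--Schwarz and the $L^2(\C)$-boundedness of $T$ give $\bigl|\mathrm{avg}_{D^\delta(\lambda')}T[\chi_{D^\delta(\lambda')}]\bigr|\le\|T[\chi_{D^\delta(\lambda')}]\|_{L^2(\C)}\,|D^\delta(\lambda')|^{-1/2}\lesssim1$. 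Hence
$$\pi\delta^2 B_{\lambda'}(d)=\frac{1}{|D^\delta(\lambda')|}\int_{D^\delta(\lambda')}T[F]\,dm+E_{\lambda'},\qquad|E_{\lambda'}|\lesssim\frac{|d_{\lambda'}|}{\rho(\lambda')^2}.$$

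To finish, apply Jensen's inequality to the averaged integral and use $|D^\delta(\lambda')|\simeq\rho(\lambda')^2$ and $\rho(z)\simeq\rho(\lambda')$ on $D^\delta(\lambda')$:
$$\rho(\lambda')^p|B_{\lambda'}(d)|^p\lesssim\int_{D^\delta(\lambda')}|T[F](z)|^p\rho(z)^{p-2}\,dm(z)+\Bigl|\frac{d_{\lambda'}}{\rho(\lambda')}\Bigr|^p.$$
Summing over $\lambda'\in\Lambda$ and using disjointness of the cells, $\sum_{\lambda'}\rho(\lambda')^p|B_{\lambda'}(d)|^p\lesssim\|T[F]\|_{L^p(\omega)}^p+\|d\|_{\ell^p(\rho^{-1})}^p\lesssim\|d\|_{\ell^p(\rho^{-1})}^p$, that is, $B\colon\ell^p(\rho^{-1})\to\ell^p(\rho)$ is bounded. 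The argument is valid in exactly the range where the series defining $B_{\lambda'}$ converge, namely for all $1<p\le2$, and for $2<p<\infty$ when $t>1/2$, by Lemma~\ref{usef}.

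The main obstacle is the collapsing identity: one must carefully justify the pointwise (a.e.)\ representation of $T[F]$ off the support of the diagonal piece $F_{\lambda'}$ — this is where the convergence of $\sum_{\lambda\in\Lambda_{\lambda'}}d_\lambda/(\lambda-\lambda')^2$, and hence the restriction $t>1/2$ for $p>2$, enters — and one must control the diagonal contribution $T[\chi_{D^\delta(\lambda')}]$ uniformly in $\lambda'$. The weighted-norm comparison for $F$ and the final summation are routine.
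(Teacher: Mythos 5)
Your argument is correct and is essentially the paper's proof: both spread $d$ into a piecewise-constant function supported on the cells $D^{\delta}(\lambda)$, invoke the weighted $L^p$-boundedness of the continuous Beurling--Ahlfors transform $T$ (using the $A_p$ hypothesis on $\rho^{p-2}$), and recover $B_{\lambda'}$ by averaging $T$ of that function over $D^{\delta}(\lambda')$ via the mean value property, with a diagonal correction of size $O\bigl(|d_{\lambda'}|/\rho(\lambda')^2\bigr)$ and the same appeal to Lemma~\ref{usef} for the convergence that limits the range of $p$. The only stylistic difference is in the diagonal term: you isolate it as $\frac{d_{\lambda'}}{\rho(\lambda')^2}T\bigl[\chi_{D^{\delta}(\lambda')}\bigr]$ and bound its average uniformly by Cauchy--Schwarz and the $L^2(\C)$-boundedness of $T$, whereas the paper normalizes $f$ so that $T[f](\lambda')=B_{\lambda'}$ exactly and then estimates $T[f](\lambda')-\tfrac{1}{|D^{\delta}(\lambda')|}\int_{D^{\delta}(\lambda')}T[f]$ by Fubini and an explicit computation of the residual singular integral over the diagonal cell -- a marginally more hands-on route to the same bound.
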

\begin{proof}
We first note that it is obvious that $B$ is linear, we are interested in showing that it indeed maps $\ell^{p}(\rho^{-1})$ to $\ell^{p}(\rho)$, and is a
bounded operator. Recall that the sets $D^{\delta_1}(\lambda)$ are pairwise disjoint. Suppose that $d\in\ell^{p}(\rho^{-1})$ and define
$f:\mathbb{C}\rightarrow\mathbb{C}$ by
$$f(z)=\frac{1}{\pi\delta_1^2}\sum_{\lambda\in\Lambda}\frac{d_{\lambda}}{\rho(\lambda)^2}\chi_{D^{\delta_1}(\lambda)}(z)$$
where $\chi_D$ is the characteristic function of the set $D$. Then clearly $f\in L^{p}(\rho^{p-2})$. In fact
$\|f\|_{L^{p}(\rho^{p-2})}^p\simeq\frac{\pi\delta_1^2}{\pi^p\delta_1^{2p}}\sum_{\lambda\in\Lambda}\left|\frac{d_\lambda}{\rho(\lambda)}\right|^p$ so that,
by our $A_p$ assumption, $T[f]\in L^{p}(\rho^{p-2})$ and indeed
$$\|T[f]\|_{L^{p}(\rho^{p-2})}\leq\|T\|\|f\|_{L^{p}(\rho^{p-2})}\simeq\|T\|\|d_\lambda\|_{\ell^{p}(\rho^{-1})}.$$
Now
\begin{align*}
T[f](\lambda')&=\lim_{\epsilon\rightarrow0}\int_{\mathbb{C}\backslash D(\lambda',\epsilon)}\frac{f(z)}{(\lambda'-z)^{2}}dm(z)\notag\\
&=\sum_{\lambda\in\Lambda_{\lambda'}}\int_{D^{\delta_1}(\lambda)}\frac{f(z)}{(\lambda'-z)^{2}}dm(z)+\lim_{\epsilon\rightarrow0}\int_{D^{\delta_1}(\lambda')\backslash D(\lambda',\epsilon)}\frac{f(z)}{(\lambda'-z)^{2}}dm(z)\notag\\
&=\sum_{\lambda\in\Lambda_{\lambda'}}\int_{D^{\delta_1}(\lambda)}\frac{d_{\lambda}}{\pi\delta_1^2\rho(\lambda)^2}\frac{1}{(\lambda'-z)^{2}}dm(z)\notag\\
&=\sum_{\lambda\in\Lambda_{\lambda'}} \frac{d_{\lambda}}{(\lambda'-\lambda)^{2}}\notag\\
&=B_{\lambda'},
\end{align*}
since $f$ is constant on $D^{\delta_1}(\lambda)$ for each $\lambda\in\Lambda$ and the average value of a harmonic function on a disk is the value at the
centre. Fix $0<\delta<\delta_1$. It is obvious that
\begin{align}\label{BAmain}
|\rho(\lambda')B_{\lambda'}|^p=\rho(\lambda')^p|T[f](\lambda')|^p\lesssim&\rho(\lambda')^p\Bigl|T[f](\lambda')-\frac{1}{\pi\delta^2\rho(\lambda')^2}\int_{D^{\delta}(\lambda')}T[f](\zeta)dm(\zeta)\Bigr|^p\notag\\
&+\rho(\lambda')^p\Bigl|\frac{1}{\pi\delta^2\rho(\lambda')^2}\int_{D^{\delta}(\lambda')}T[f](\zeta)dm(\zeta)\Bigr|^p
\end{align}
and we shall estimate these terms separately. The second term is especially easy to bound since, by Jensen's inequality,
\begin{align}
\rho(\lambda')^p\left|\frac{1}{\pi\delta^2\rho(\lambda')^2}\int_{D^{\delta}(\lambda')}T[f](\zeta)dm(\zeta)\right|^p&\leq\rho(\lambda')^p\frac{1}{\pi\delta^2\rho(\lambda')^2}\int_{D^{\delta}(\lambda')}|T[f](\zeta)|^p dm(\zeta)\notag\\
&\simeq\frac{1}{\pi\delta^2}\int_{D^{\delta}(\lambda')}|T[f](\zeta)|^p\frac{dm(\zeta)}{\rho(\zeta)^{2-p}}\label{BAeasy}.
\end{align}

We now estimate the first term. Applying the definitions and computing gives
\begin{multline*}
T[f](\lambda')-\frac{1}{\pi\delta^2\rho(\lambda')^2}\int_{D^{\delta}(\lambda')}T[f](\zeta)dm(\zeta)=\frac{1}{\pi\delta^2\rho(\lambda')^2}\int_{D^{\delta}(\lambda')}T[f](\lambda')-T[f](\zeta)dm(\zeta)\\
=\frac{1}{\pi\delta^2\rho(\lambda')^2}\int_{D^{\delta}(\lambda')}\Bigl( \lim_{\epsilon\rightarrow0}\int_{\mathbb{C}\backslash D(\lambda',\epsilon)}\frac{f(z)}{(\lambda'-z)^{2}}dm(z) -\lim_{\epsilon\rightarrow0}\int_{\mathbb{C}\backslash D(\zeta,\epsilon)}\frac{f(z)}{(\zeta-z)^{2}}dm(z)\Bigr)dm(\zeta)\\
=\frac{1}{\pi\delta^2\rho(\lambda')^2}\int_{D^{\delta}(\lambda')}\Bigl\{\int_{\mathbb{C}\backslash D^{\delta_1}(\lambda')}f(z)\Bigl[\frac{1}{(\lambda'-z)^{2}}-\frac{1}{(\zeta-z)^{2}}\Bigr]dm(z)\\
+\lim_{\epsilon\rightarrow0}\int_{D^{\delta_1}(\lambda')\backslash
D(\lambda',\epsilon)}\frac{f(z)}{(\lambda'-z)^{2}}dm(z)-\lim_{\epsilon\rightarrow0}\int_{D^{\delta_1}(\lambda')\backslash
D(\zeta,\epsilon)}\frac{f(z)}{(\zeta-z)^{2}}dm(z)\Bigr\}dm(\zeta).
\end{multline*}
We shall bound each of these three terms separately. First note that, by symmetry,
$$\lim_{\epsilon\rightarrow0}\int_{D^{\delta_1}(\lambda')\backslash D(\lambda',\epsilon)}\frac{f(z)}{(\lambda'-z)^{2}}dm(z)=0.$$
Note also that if
$$\int_{D^{\delta}(\lambda')}\int_{\mathbb{C}\backslash
D^{\delta_1}(\lambda')}f(z)\Bigl[\frac{1}{(\lambda'-z)^{2}}-\frac{1}{(\zeta-z)^{2}}\Bigr]dm(z)dm(\zeta)$$
is absolutely convergent then it vanishes similarly, since we may
apply Fubini's theorem. But
\begin{multline*}
\Bigl|\int_{D^{\delta}(\lambda')}\int_{\mathbb{C}\backslash
D^{\delta_1}(\lambda')}f(z)\Bigl[\frac{1}{(\lambda'-z)^{2}}-\frac{1}{(\zeta-z)^{2}}\Bigr]dm(z)dm(\zeta)\Bigr|\\=
\Bigl|\int_{D^{\delta}(\lambda')}\int_{\mathbb{C}\backslash
D^{\delta_1}(\lambda')}f(z)\Bigl[\frac{(\zeta+\lambda'-2z)(\zeta-\lambda')}{(\lambda'-z)^{2}(\zeta-z)^{2}}\Bigr]dm(z)dm(\zeta)\Bigr|\\
\lesssim\int_{D^{\delta}(\lambda')}|\zeta-\lambda'|dm(\zeta)\int_{\mathbb{C}\backslash
D^{\delta_1}(\lambda')}\frac{|f(z)|}{|\lambda'-z|^3}dm(z)
\end{multline*}
since for $\zeta\in D^{\delta}(\lambda')$ and $z\in \mathbb{C}\backslash D^{\delta_1}(\lambda')$ we have $|\zeta-z|\simeq|\lambda'-z|$. The integral in
$\zeta$ is clearly finite. It remains only to estimate
$$\int_{\mathbb{C}\backslash
D^{\delta_1}(\lambda')}\frac{|f(z)|}{|\lambda'-z|^3}dm(z)=\sum_{\lambda\in\Lambda_{\lambda'}}\frac{|d_{\lambda}|}{\pi\delta_1^2\rho(\lambda)^2}\int_{D^{\delta_1}(\lambda)}\frac{dm(z)}{|\lambda'-z|^3}
\simeq\sum_{\lambda\in\Lambda_{\lambda'}}\frac{|d_{\lambda}|}{|\lambda'-\lambda|^3}$$
which we have already seen is finite under our hypothesis, in
Lemma~\ref{usef}. We consequently have
\begin{multline*}
T[f](\lambda')-\frac{1}{\pi\delta^2\rho(\lambda')^2}\int_{D^{\delta}(\lambda')}T[f](\zeta)dm(\zeta)\\
=\frac{1}{\pi\delta^2\rho(\lambda')^2}\int_{D^{\delta}(\lambda')}-\lim_{\epsilon\rightarrow0}\int_{D^{\delta_1}(\lambda')\backslash D(\zeta,\epsilon)}\frac{f(z)}{(\zeta-z)^{2}}dm(z)dm(\zeta)\\
=-\frac{d_{\lambda'}}{\pi^2\delta^2\delta_1^2\rho(\lambda')^4}\int_{D^{\delta}(\lambda')}\lim_{\epsilon\rightarrow0}\int_{D^{\delta_1}(\lambda')\backslash
D(\zeta,\epsilon)}\frac{dm(z)}{(\zeta-z)^{2}}dm(\zeta)
\end{multline*}
Now the inner integral does not change in value for $\epsilon\leq(\delta_1-\delta)\rho(\lambda')$. We therefore have
\begin{multline}\label{BAcentre}
\bigl|T[f](\lambda')-\frac{1}{\pi\delta^2\rho(\lambda')^2}\int_{D^{\delta}(\lambda')}T[f](\zeta)dm(\zeta)\bigr|\\
\leq\frac{|d_{\lambda'}|}{\pi^2\delta^2\delta_1^2\rho(\lambda')^4}\int_{D^{\delta}(\lambda')}\int_{D^{\delta_1}(\lambda')\backslash D(\zeta,(\delta_1-\delta)\rho(\lambda'))}\frac{dm(z)}{|\zeta-z|^{2}}dm(\zeta)\\
\leq\frac{|d_{\lambda'}|}{\pi^2\delta^2\delta_1^2\rho(\lambda')^4}\frac{1}{(\delta_1-\delta)^2\rho(\lambda')^2}|D^{\delta}(\lambda')||D^{\delta_1}(\lambda')\backslash
D(\zeta,(\delta_1-\delta)\rho(\lambda'))|\simeq\frac{|d_{\lambda'}|}{\rho(\lambda')^2}.
\end{multline}
Inserting~\eqref{BAeasy} and~\eqref{BAcentre} into ~\eqref{BAmain} gives finally that
\begin{align*}
\sum_{\lambda'\in\Lambda}|\rho(\lambda')B_{\lambda'}|^p&\lesssim\sum_{\lambda'\in\Lambda}\left(\int_{D^{\delta}(\lambda')}|T[f](\zeta)|^p\frac{dm(\zeta)}{\rho(\zeta)^{2-p}}+\frac{|d_{\lambda'}|^p}{\rho(\lambda')^p}\right)\\
&\leq\int_\C|T[f](\zeta)|^p\frac{dm(\zeta)}{\rho(\zeta)^{2-p}}+\sum_{\lambda'\in\Lambda}\frac{|d_{\lambda'}|^p}{\rho(\lambda')^p}\\
&=\|T[f]\|_{L^p(\rho^{p-2})}^p+\|d\|_{\ell^{p}(\rho^{-1})}^p\lesssim(1+\|T\|^p)\|d\|_{\ell^{p}(\rho^{-1})}^p
\end{align*}
so that $B$ is indeed a bounded operator as claimed.
\end{proof}

\section{Proofs}
We shall use the same notation as before. We write $d_\lambda=c_\lambda/g'(\lambda)$ which, by virtue of the growth conditions on $g$, satisfies
$\left(d_\lambda/\rho(\lambda)\right)_{\lambda\in\Lambda}\in\ell^p$. We are essentially going to give a unified proof of Theorems~\ref{main2},
\ref{main1}, \ref{mainbetw1n2} and \ref{mainbig2}. We shall refer to an integer $N$ which should be thought of as $2$ for the case of
Theorems~\ref{main2}, \ref{main1}, \ref{mainbetw1n2} and \ref{mainbig2}(i) and (ii), but to be the integer $N$ appearing in the statement of
Theorem~\ref{mainbig2}(iii). We also note that if $N=2$ and $\rho^{p-2}$ satisfies the $A_p$ condition then we may apply Theorem~\ref{discBA}. We begin by
showing the necessity of the stated results.
\begin{proof}[Proof of the necessity]
We have already remarked in \eqref{basicnec} that (a) follows from the Plancherel-Polya type estimate. We define $\gamma(R)$ as in the proof of
Lemma~\ref{lemrepinf}. Computing, for any $\lambda'\in\Lambda$,
$$\frac{1}{2\pi i}\int_{\gamma(R)}\frac{f(w)}{g(w)(w-\lambda')^n}dw$$ in exactly the same manner as in the proof of Lemma~\ref{lemrepinf}, where $1\leq n\leq N$, shows that
$$\text{\textup{p.v.}}\sum_{\lambda\in\Lambda_{\lambda'}}\frac{d_\lambda}{(\lambda-\lambda')^n}$$
is well-defined. Fix some $0<\delta<\delta_1$ and some integer $0\leq k<N$. Define $\omega_k=e^{2\pi ik/N}$ and $z_{\lambda'}^k=\lambda'+\delta
\omega_k\rho(\lambda')$. Then, for each $k$, $(z_{\lambda'}^k)_{\lambda'\in\Lambda}$ is a $\rho$-separated sequence that is bounded away from $\Lambda$ in
the distance $d_\phi$. \eqref{rhosepseries} implies that
$$\sum_{\lambda'\in\Lambda}\left|\frac{f(z_{\lambda'}^k)}{g(z_{\lambda'}^k)}\right|^p<+\infty.$$
Replacing $z$ by $\delta \omega_k\rho(\lambda')$ and $\lambda$ by
$\lambda-\lambda'$ in Identity~\eqref{tayl} yields
$$\frac{1}{z_{\lambda'}^k-\lambda}+\frac{1}{\lambda-\lambda'}+\frac{\delta \omega_k\rho(\lambda')}{(\lambda-\lambda')^2}+\cdots+\frac{(\delta \omega_k\rho(\lambda'))^{n-1}}{(\lambda-\lambda')^n}=\frac{(\delta \omega_k\rho(\lambda'))^n}{(\lambda-\lambda')^n(z_{\lambda'}^k-\lambda)}.$$
Consequently, invoking~\eqref{repfinite}, we compute that
\begin{multline*}
\frac{f(z_{\lambda'}^k)}{g(z_{\lambda'}^k)}+\text{\textup{p.v.}}\sum_{\lambda\in\Lambda_{\lambda'}}d_\lambda\left(\frac{1}{\lambda-\lambda'} +\frac{\delta
\omega_k\rho(\lambda')}{(\lambda-\lambda')^2}+\cdots+\frac{(\delta \omega_k\rho(\lambda'))^{N-1}}{(\lambda-\lambda')^N}\right)
\\=\frac{d_{\lambda'}}{\delta \omega_k\rho(\lambda')}+\text{\textup{p.v.}}\sum_{\lambda\in\Lambda_{\lambda'}}\frac{d_\lambda(\delta
\omega_k\rho(\lambda'))^N}{(\lambda-\lambda')^N(z_{\lambda'}^k-\lambda)}.
\end{multline*}
Hence
\begin{multline*}
\sum_{\lambda'\in\Lambda}\Bigl|\text{\textup{p.v.}}\sum_{\lambda\in\Lambda_{\lambda'}}d_\lambda\bigl(\frac{1}{\lambda-\lambda'}
+\frac{\delta
\omega_k\rho(\lambda')}{(\lambda-\lambda')^2}+\cdots+\frac{(\delta
\omega_k\rho(\lambda'))^{N-1}}{(\lambda-\lambda')^N}\bigr)\Bigr|^p
\\ \lesssim\sum_{\lambda'\in\Lambda}\Bigl\{\bigl(\delta^{N}\rho(\lambda')^{N}\text{\textup{p.v.}}\sum_{\lambda\in\Lambda_{\lambda'}}\frac{|d_\lambda|}{|\lambda-\lambda'|^N|z_{\lambda'}^k-\lambda|}\bigr)^p+\bigl|\frac{d_{\lambda'}}{\delta\rho(\lambda')}\bigr|^p+\bigl|\frac{f(z_{\lambda'}^k)}{g(z_{\lambda'}^k)}\bigr|^p\Bigr\}.
\end{multline*}
We know that the second and third terms are summable, it remains only to estimate the first. All of the terms in this sum are positive, so we may ignore
the principal value. Moreover $|z_{\lambda'}^k-\lambda|\simeq|\lambda-\lambda'|$, so that Lemma~\ref{usef} shows that this sum is convergent. (It is here
that the value of $N$ is important.) Taking now linear combinations over different $k$ completes the proof, for example
\begin{multline*}
\sum_{\lambda'\in\Lambda}\bigl|\text{\textup{p.v.}}\sum_{\lambda\in\Lambda_{\lambda'}}\frac{d_\lambda}{\lambda-\lambda'}\bigr|^p=\\
\sum_{\lambda'\in\Lambda}\Bigl|\text{\textup{p.v.}}\sum_{\lambda\in\Lambda_{\lambda'}}d_\lambda\bigl(\frac{1}{N}\sum_{k=0}^{N-1}\frac{1}{\lambda-\lambda'}
+\frac{\delta
\omega_k\rho(\lambda')}{(\lambda-\lambda')^2}+\cdots+\frac{(\delta
\omega_k\rho(\lambda'))^{N-1}}{(\lambda-\lambda')^N}\bigr)\Bigr|^p<+\infty
\end{multline*}
\end{proof}
We now turn to the proof of the sufficiency, which is similar. We use the same notation.
\begin{proof}[Proof of the sufficiency]
We wish to construct a function, that solves the interpolation problem $f|\Lambda=c$. As in the proof to Lemma~\ref{lemrepinf}, the na\"{\i}ve attempt at
Lagrange interpolation is not in general convergent. We modify in the exact same manner, and a similar argument shows that
$$G(z)=\frac{d_0}{z}+\sum_{\lambda\in\Lambda_0}d_\lambda\left(\frac{1}{z-\lambda}+\frac{1}{\lambda}+\frac{z}{\lambda^2}+\cdots+\frac{z^{N-1}}{\lambda^N}\right)$$
defines a meromorphic function on $\C$. (Here we invoke Lemma~\ref{usef} to see the series is convergent, which once more determines the value of $N$.)
Hence
$$G(z)+\sum_{k=1}^N z^{k-1}\text{\textup{p.v.}} \sum_{\lambda\in\Lambda_0}\frac{d_\lambda}{\lambda^k}=
\text{\textup{p.v.}}\sum_{\lambda\in\Lambda}\frac{d_\lambda}{z-\lambda}$$ is a well-defined meromorphic function. It follows that
$f(z)=g(z)\text{\textup{p.v.}}\sum_{\lambda\in\Lambda}\frac{d_\lambda}{z-\lambda}$ is an entire function satisfying $f(\lambda)=c_\lambda$. It remains to
show that $f\in\Fpa$. We must show the following integral is finite (Recall that $Q_\lambda=\{z\in\mathbb{C}:d_\phi(z,\Lambda)=d_\phi(z,\lambda)\}$):
\begin{align*}
\int_\mathbb{C}|f(z)|^p e^{-p\phi(z)}\frac{dm(z)}{\rho(z)^2}&=\sum_{\lambda'\in\Lambda}\int_{Q_{\lambda'}}|f(z)|^p e^{-p\phi(z)}\frac{dm(z)}{\rho(z)^2}\\
&=\sum_{\lambda'\in\Lambda}\int_{Q_{\lambda'}}\bigl|g(z) e^{-\phi(z)}\text{\textup{p.v.}}\sum_{\lambda\in\Lambda}\frac{d_\lambda}{z-\lambda}\bigr|^p\frac{dm(z)}{\rho(z)^2}\\
&\simeq\sum_{\lambda'\in\Lambda}\int_{Q_{\lambda'}}\Bigl|d_\phi(z,\lambda')\bigl(\frac{d_{\lambda'}}{z-\lambda'}+\text{\textup{p.v.}}\sum_{\lambda\in\Lambda_{\lambda'}}\frac{d_\lambda}{z-\lambda}\bigr)\Bigr|^p\frac{dm(z)}{\rho(z)^2}\\
&\lesssim\sum_{\lambda'\in\Lambda}\bigl|\frac{d_{\lambda'}}{\rho(\lambda')}\bigr|^p+\sum_{\lambda'\in\Lambda}\int_{Q_{\lambda'}}\bigl|\text{\textup{p.v.}}\sum_{\lambda\in\Lambda_{\lambda'}}\frac{d_\lambda}{z-\lambda}\bigr|^p\frac{dm(z)}{\rho(z)^2},
\end{align*}
where we have used the fact that $d_\phi(z,\lambda')\simeq|z-\lambda'|/\rho(\lambda')\lesssim1$ for $z\in Q_{\lambda'}$ by Lemma~\ref{lem4}. The first
term is finite by hypothesis, so we need only bound the second. Once more we use Identity~\eqref{tayl} which suitably modified yields
$$\frac{1}{z-\lambda}=\frac{(z-\lambda')^N}{(z-\lambda)(\lambda-\lambda')^N}-\frac{1}{\lambda-\lambda'}-\frac{z-\lambda'}{(\lambda-\lambda')^2}-\cdots-\frac{(z-\lambda')^{N-1}}{(\lambda-\lambda')^N}$$
whence
\begin{align*}
\sum_{\lambda'\in\Lambda}\int_{Q_{\lambda'}}\bigl|\text{\textup{p.v.}}\sum_{\lambda\in\Lambda_{\lambda'}}\frac{d_\lambda}{z-\lambda}\bigr|^p\frac{dm(z)}{\rho(z)^2}\lesssim&\sum_{\lambda'\in\Lambda}\int_{Q_{\lambda'}}\bigl|\text{\textup{p.v.}}\sum_{\lambda\in\Lambda_{\lambda'}}\frac{d_\lambda(z-\lambda')^N}{(z-\lambda)(\lambda-\lambda')^N}\bigr|^p\\
&+\sum_{n=1}^N\bigl|\text{\textup{p.v.}}\sum_{\lambda\in\Lambda_{\lambda'}}\frac{d_\lambda(z-\lambda')^{n-1}}{(\lambda-\lambda')^n}\bigr|^p\frac{dm(z)}{\rho(z)^2}.
\end{align*}
Now for $z\in Q_{\lambda'}$ we have $|z-\lambda'|\lesssim\rho(\lambda')$ and $|z-\lambda|\simeq|\lambda'-\lambda|$. Hence, by ~\eqref{intQ},
\begin{align*}
\sum_{\lambda'\in\Lambda}\int_{Q_{\lambda'}}\bigl|\text{\textup{p.v.}}\sum_{\lambda\in\Lambda_{\lambda'}}\frac{d_\lambda}{z-\lambda}\bigr|^p\frac{dm(z)}{\rho(z)^2}\lesssim&\sum_{\lambda'\in\Lambda}\bigl(\rho(\lambda')^N\text{\textup{p.v.}}\sum_{\lambda\in\Lambda_{\lambda'}}\frac{|d_\lambda|}{|\lambda-\lambda'|^{N+1}}\bigr)^p+\\
&\sum_{n=1}^N\sum_{\lambda'\in\Lambda}\bigl|\rho(\lambda')^{n-1}\text{\textup{p.v.}}\sum_{\lambda\in\Lambda_{\lambda'}}\frac{d_\lambda}{(\lambda-\lambda')^n}\bigr|^p.
\end{align*}
The first term is finite by Lemma~\ref{usef} (again the value of $N$ is important here), the remainder by hypothesis. This completes the proof.
\end{proof}
The proof of Theorem~\ref{maininf} is similar and omitted.

\end{document}